\newtheorem{theorem}{Theorem}[section]
\newtheorem{lemma}[theorem]{Lemma}
\theoremstyle{definition}
\newtheorem{definition}[theorem]{Definition}
\newtheorem{definitions}[theorem]{Definitions}
\newtheorem{example}[theorem]{Example}
\newtheorem{examples}[theorem]{Examples}
\newtheorem{definitions and remarks}[theorem]{Definitions and Remarks}
\newtheorem{question}[theorem]{Question}
\theoremstyle{remark}
\newtheorem{remark}[theorem]{Remark}
\newtheorem{remarks}[theorem]{Remarks}
\numberwithin{equation}{section}
\newcommand{\exc}{\mathrm{exc}}
\newcommand{\inv}{\mathrm{inv}}
\newcommand{\Sing}{\mathrm{Sing}\,}
\newcommand{\Supp}{\mathrm{Supp}\,}
\newcommand{\cosupp}{\mathrm{cosupp}\,}
\newcommand{\ord}{\mathrm{ord}}
\newcommand{\car}{\mathrm{char}\,}
\newcommand{\lcm}{\mathrm{lcm}}
\newcommand{\Bl}{\mathrm{Bl}}
\newcommand{\al}{{\alpha}}
\newcommand{\be}{{\beta}}
\newcommand{\ep}{{\epsilon}}
\newcommand{\la}{{\lambda}}
\newcommand{\p}{{\partial}}
\newcommand{\s}{{\sigma}}
\newcommand{\vp}{{\varphi}}
\newcommand{\io}{{\iota}}
\newcommand{\IN}{{\mathbb N}}
\newcommand{\IA}{{\mathbb A}}
\newcommand{\IC}{{\mathbb C}}
\newcommand{\IZ}{{\mathbb Z}}
\newcommand{\cC}{{\mathcal C}}
\newcommand{\cD}{{\mathcal D}}
\newcommand{\cG}{{\mathcal G}}
\newcommand{\cI}{{\mathcal I}}
\newcommand{\cJ}{{\mathcal J}}
\newcommand{\cM}{{\mathcal M}}
\newcommand{\cO}{{\mathcal O}}
\newcommand{\cR}{{\mathcal R}}
\newcommand{\cS}{{\mathcal S}}
\newcommand{\uk}{\underline{k}}
\newcommand{\tx}{{\tilde x}}
\newcommand{\ty}{{\tilde y}}
\newcommand{\ts}{{\tilde \sigma}}
\newcommand{\ucG}{\underline{\cG}}
\newcommand{\ucI}{\underline{\cI}}
\newcommand{\ucJ}{\underline{\cJ}}
\newcommand{\ucC}{\underline{\cC}}
\newcommand{\ucD}{\underline{\cD}}
\newcommand{\ucM}{\underline{\cM}}
\newcommand{\ucR}{\underline{\cR}}
\begin{document}

\title[Resolution except for minimal singularities I]
{Resolution except for minimal singularities I}

\author{Edward Bierstone}
\address{The Fields Institute, 222 College Street, Toronto, Ontario, Canada
M5T 3J1, and University of Toronto, Department of Mathematics, 40 St. George Street,
Toronto, Ontario, Canada M5S 2E4}
\email{bierston@fields.utoronto.ca}
\thanks{Research supported in part by the following grants: Bierstone: NSERC
OGP0009070 and MRS342058, Milman: NSERC OGP0008949.}

\author{Pierre D. Milman}
\address{University of Toronto, Department of Mathematics, 40 St. George Street,
Toronto, Ontario, Canada M5S 2E4}
\email{milman@math.toronto.edu}

\subjclass{Primary 14B05, 14E15, 32S45; Secondary 14J17, 32S05, 32S10, 58K50}

\keywords{birational geometry, resolution of singularities, normal crossings, desingularization invariant, normal form}

\begin{abstract}
The philosophy of this article 
is that the desingularization invariant together 
with natural geometric information can be used to compute local normal forms of 
singularities.  The idea is used in two related problems: 
(1) We give a proof of resolution 
of singularities of a variety or a divisor, except for simple normal crossings  (i.e., which 
avoids blowing up simple normal crossings, and ends up with a variety or a divisor 
having only simple normal crossings singularities). (2) For more general normal 
crossings (in a local analytic or formal sense), such a result does not hold. We find 
the smallest class of singularities (in low dimension or low codimension) with which 
we necessarily end up if we avoid blowing up normal crossings singularities. Several of 
the questions studied were raised by Koll\'ar.
\end{abstract}

\maketitle
\setcounter{tocdepth}{1}
\tableofcontents

\section{Introduction}\label{sec:intro} 
The philosophy developed in this article and in the sequel \cite{BLM} 
is that the desingularization invariant of
\cite{BMinv} together with natural geometric information can be used to compute
local normal forms of singularities, at least when the constant locus of the invariant
has low codimension.  The idea is used in two related problems: 
(1) We give a proof 
of resolution of singularities of a variety or a divisor, except for simple normal crossings
(i.e., which avoids blowing up simple normal crossings singularities, and ends up
with a variety or a divisor having only simple normal crossings singularities). (2) For 
more general normal crossings (in a local analytic or formal sense), such a result does
not hold. We find the smallest class of singularities (in low dimension or low 
codimension) with which we necessarily end up if we avoid blowing up normal 
crossings singularities. Several of the questions studied were 
raised by J\'anos 
Koll\'ar \cite{Kolog}. 
We have included a \emph{Crash course on the desingularization invariant} as an 
Appendix, in order to make the article as self-contained as possible.

The preceding problems are interesting because normal crossings or more general
``mild singularities'' have to be admitted in natural geometric situations.

\begin{example}\label{ex:elliptic}
Consider the family of projective curves $X_\la$, 
$$
z^3 + y^3 + x^3 - 3\la xyz = 0.
$$
The curve $X_\la$ is smooth if $\la^3 \neq 1$. When $\la = 1$, for example,
the equation splits as
$$
(z+y+x)(z+ \ep y + \ep^2 x)(z + \ep^2 y + \ep x) = 0, 
$$
where $\ep$ denotes the cube root of unity $\ep = e^{2\pi i/3}$; in particular
$X_1$ has normal crossings singularities. We cannot simultaneously 
resolve the singularities
of a family of curves without allowing special fibres that have normal crossings
singularities. (Here, for instance, because the generic and special
fibres have different genera.)
\end{example}

As another example, resolution of singularities of an ideal or a divisor (``log-resolution'' 
of singularities) leads to a divisor with normal crossings.
In the same way, when we resolve the singularities of a singular algebraic (or
analytic) variety, its total transform (or inverse image, with respect to any local
embedding of the variety in a smooth space) necessarily has normal crossings
singularities. From the point of view of these examples, it is reasonable to consider 
normal crossings singularities acceptable from the start (in any case, they can be
eliminated by normalization), and to ask whether we can
resolve singularities except for normal crossings. In particular, we can ask:

\begin{question}\label{qu:nc}
Given an algebraic variety $X$, can we find a proper birational morphism
$\s: X' \to X$ such that
\begin{enumerate}
\item $X'$ has only normal crossings singularities;
\item $\s$ is an isomorphism over the locus of points of $X$ having only normal
crossings singularities?
\end{enumerate}
\end{question}

An \emph{algebraic variety} means a scheme of finite type over a field $\uk$.
Throughout this article, $\car \uk = 0$.

The question above is ambiguous. Roughly speaking, we say that $X$ has normal
crossings at a point $a$ if, locally at $a$, every irreducible component is smooth and
all intersections are transverse; in other words, locally, $X$ can be embedded in a smooth
variety $Z$ with local coordinates $(x_1,\ldots, x_n)$ at $a$ in which $X$ is defined by
a monomial equation 
\begin{equation}\label{eq:mon}
x_1^{\al_1}\cdots x_n^{\al_n} = 0
\end{equation}
(where the $\al_i$ are nonnegative integers).
The ambiguity is in the meaning of ``locally'' or ``local coordinates''.

\begin{definitions}\label{def:nc}
Let $X$ denote an algebraic variety over $\uk$. We say that $X$ has \emph{simple
normal crossings} (\emph{snc}) at a point $a$ if there is an embedding of an open 
neighbourhood of $a$ in a smooth variety $Z$ and a regular system of parameters
$(x_1,\ldots, x_n)$ for $Z$ at $a$, with respect to which $X$ is defined by an equation
(\ref{eq:mon}).

We say that $X$ has \emph{normal crossings} (\emph{nc}) at $a$ if the same condition
is satisfied, except that $(x_1,\ldots, x_n)$ is a local \'etale coodinate system.

We will say that $X$ has \emph{normal crossings} (or \emph{simple normal crossings})
\emph{of order} $k$ at $a$ if precisely $k$ exponents $\al_i$ are nonzero in (\ref{eq:mon}).
\end{definitions}

A variety $X$ has normal crossings at $a$ if and only if it can be defined at $a$ by a monomial
equation with respect to formal coordinates, after a finite extension of the ground field $\uk$.
In the case of simple normal crossings (with reference to the definition above), each
irreducible component of $X$ containing $a$ is given locally by $x_i = 0$, for some $i$.
The definitions \ref{def:nc} have obvious analogues for an embedded variety $X$ or for
a divisor on a smooth variety.

\begin{examples}\label{ex:nc}
The plane curve $y^2 = x^2 + x^3$ has normal crossings but not simple normal
crossings at the origin. The curve $y^2 + x^2 = 0$ is nc, but is snc if and only if
$\sqrt{-1} \in \uk$. An embedded hypersurface defined at a point by an equation
$y^2 + ux^2 = 0$, where $x,y$ are regular coordinates and $u$ is a unit in the local
ring, is nc at $a$, but snc if and only if $u$ is a square.
\end{examples}

The answer to Question \ref{qu:nc} is ``yes'' for snc (Theorem \ref{thm:nc} following),
but ``no'' for nc in general (Example \ref{ex:pp}).

\begin{theorem}\label{thm:nc}
Let $X$ denote a reduced variety over $\uk$. Let $X^{\mathrm{snc}}$ denote the
simple normal crossings locus of $X$. Then there is a morphism $\s: X'\to X$ which
is a composite of finitely many \emph{admissible} blowings-up, such that
\begin{enumerate}
\item $X' = (X')^{\mathrm{snc}}$;
\item $\s$ is an isomorphism over $X^{\mathrm{snc}}$.
\end{enumerate}
\end{theorem}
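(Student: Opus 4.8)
The plan is to run the standard embedded desingularization algorithm of \cite{BMinv}, but to modify the centres of blowing up so that we never blow up a centre lying inside the snc locus, and to show that, when the algorithm stops making such blowings-up, what remains is globally snc. Locally embed $X$ in a smooth variety $Z$; since the snc property and the resolution invariant are both compatible with étale (indeed smooth) localization, it suffices to work with a marked ideal (or the ideal of $X$) on $Z$ together with the empty exceptional divisor, and to glue the resulting centres — the key point being that the desingularization invariant $\inv$ of \cite{BMinv} is intrinsic, so the loci it defines are global and independent of the embedding. First I would record the local picture: at a point $a$ where $X$ has snc of order $k$, in suitable coordinates $X=\{x_1\cdots x_k=0\}$, and one computes that the invariant $\inv(a)$ depends only on $k$ (and the number of components through $a$); in particular the snc locus is a union of strata of $\inv$, each locally closed and smooth, and the value of $\inv$ there is one of finitely many ``snc values.'' The subtlety that $\inv$ is defined using a \emph{history} of exceptional divisors is handled by the usual device of only allowing blowings-up whose centres have normal crossings with the accumulated exceptional divisors; since our centres will always be chosen in the non-snc locus, this is automatic from the structure of the algorithm.

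The heart of the argument is the following dichotomy at each stage of the algorithm: at a point $a$ of the current variety, either $a$ lies in the snc locus — in which case $\inv(a)$ equals one of the finitely many snc values and we leave $a$ alone — or $a$ is not snc, in which case I claim the maximal-$\inv$-value centre prescribed by the \cite{BMinv} algorithm, intersected with a neighbourhood of $a$, does \emph{not} meet the snc locus, so blowing it up is admissible in the sense of avoiding snc. To prove this claim I would use the philosophy advertised in the introduction: if the constant locus of $\inv$ near $a$ has low enough codimension and $\inv(a)$ is an snc value, then a normal-form computation forces $X$ to be snc at $a$, contradicting $a\notin X^{\mathrm{snc}}$. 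More precisely, one shows $\inv$ takes a value strictly larger than every snc value at every non-snc point, so the maximal centre lies entirely in the non-snc locus. The point $a$ is then resolved (removed from the support of the controlling ideal) after finitely many such blowings-up by the termination statement for the \cite{BMinv} algorithm — here one must check that deleting the snc part of the centre does not destroy termination, which follows because over the snc locus the invariant is already locally constant and the algorithm would have declared those points ``resolved'' anyway; equivalently, one runs the algorithm on the ideal that cuts out only the non-snc behaviour.

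Assembling this: after finitely many blowings-up $\s\colon X'\to X$, no point of $X'$ is outside the snc locus, i.e. $X'=(X')^{\mathrm{snc}}$, giving (1); and since every centre avoided $X^{\mathrm{snc}}$ and blowing up a centre disjoint from an open set is an isomorphism over that open set, $\s$ is an isomorphism over $X^{\mathrm{snc}}$, giving (2). Each centre is smooth and has simple normal crossings with the exceptional divisor created so far (it is contained in the smooth non-snc stratum of $\inv$, and transversality to the exceptional divisors is part of the \cite{BMinv} construction), so each blowing-up is admissible.

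I expect the main obstacle to be the claim that \emph{every} non-snc point has $\inv$-value strictly exceeding all snc values — equivalently, that an snc value of the invariant together with the geometric data it carries already pins down the snc normal form (\ref{eq:mon}). This is exactly the ``invariant $+$ geometry $\Rightarrow$ normal form'' mechanism, and it is delicate precisely because it can fail for nc (as Example \ref{ex:pp} shows), so the proof must use simpleness of the crossings in an essential way — for instance, that each component is cut out by a coordinate hyperplane, not merely étale-locally so. Controlling this comparison uniformly, and checking it survives each blowing-up (i.e. is preserved in the induction on the resolution history), is where the real work lies; everything else is bookkeeping with the algorithm of \cite{BMinv} recalled in the Appendix.
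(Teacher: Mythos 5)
Your proposal rests on the claim that ``$\inv$ takes a value strictly larger than every snc value at every non-snc point,'' and this claim is false; the entire structure of the paper's proof exists precisely to work around its failure. Two concrete counterexamples from the paper itself: (i) the Fermat-type singularity $x_1^q+\cdots+x_q^q=0$ has $\inv(0)=\io_q=(q,0,1,0,\ldots,1,0,\infty)$, the same value as an snc point of order $q$, yet it is not snc (or even nc) --- so an snc value of $\inv$ alone does not pin down snc, and one needs the number of local irreducible components as additional data (Lemma~\ref{lem:nc}(1)); (ii) the pinch point $z^2+xy^2=0$ has $\inv(\mathrm{pp})=(2,0,3/2,0,1,0,\infty)$, which is \emph{strictly smaller} than $\io_3=(3,0,1,0,1,0,\infty)$ in the lexicographic order. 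So if $X$ has, say, some snc points of order $3$ elsewhere, the maximum value of $\inv$ on $X$ is $\io_3$, attained on a stratum that mixes genuine snc$3$ points and points like (i), and the standard \cite{BMinv} algorithm would blow up \emph{all} of them --- including the snc ones. Your proposed ``leave snc points alone, blow up the usual centre'' dichotomy never gets off the ground, because the usual centre already meets $X^{\mathrm{snc}}$.

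There is a second, independent gap. Even at a point $a$ of some $X_j$ where $\inv(a)=\io_q$ \emph{and} $X_j$ has $q$ irreducible components at $a$, the variety need not be snc at $a$ once $j>0$: Lemma~\ref{lem:nc}(2) shows the local equation is a product $f_1\cdots f_q$ in which each factor involves monomials $u^{\al^k}$ in the exceptional divisors, which is snc only if all $\al^k=0$. Removing these monomials requires blowings-up that are $\inv_p$-admissible for some truncation $\inv_p$ but are \emph{not} prescribed by the maximum locus of $\inv$; this is the role of the Cleaning Lemma (Section~\ref{sec:clean}). Your sketch does not have this mechanism at all. The paper's actual argument is a descending induction on the order $q$: it first uses the algorithm to bring $\max\inv$ down to $\io_q$ (for $q$ = largest snc order), then eliminates non-generically-snc components of the stratum $(\inv=\io_q)$, then cleans, then restricts attention to the complement and repeats with $q-1$, with an additional device to separate components of non-closed centres near the already-treated locus. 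You would need all of this --- the claim you hoped to outsource to the invariant is simply not true.
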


An \emph{admissible} blowing-up means a blowing-up $\s$ with centre $C$ which is
smooth and has only simple normal crossings with respect to the exceptional
divisor. The latter condition means that, with respect to a suitable local embedding
of $X$ in a smooth variety $Z$ and the induced blowing-up sequence of $Z$,
there are regular coordinates $(x_1,\ldots, x_n)$ at any point of $C$, in which 
$C$ is a coordinate subspace and each component of the exceptional divisor is a
coordinate hyperplane $(x_i = 0)$, for some $i$.

Versions of Theorem \ref{thm:nc} were first proved by Szab\'o \cite{Sz} and by the
authors \cite[\S12]{BMinv}. We give a proof in Section \ref{sec:nc} below that we
sketched in a letter to Michael Temkin (2007); see \cite[Thm.\,2.2.11]{Tem1}.
The theorem can be strengthened in various ways. (See Section \ref{sec:nc}.)
For example, instead of using the snc locus, we can use the locus of points
having only simple normal crossings singularities of order up to $r$ (\emph{snc$\leq$$r$}),
for given $r$ (Remark \ref{rem:nc}). Moreover, $\s$ can be realized as a composite
of smooth blowings-up
\begin{equation}\label{eq:blupX}
X = X_0 \stackrel{\s_1}{\longleftarrow} X_1 \longleftarrow \cdots
\stackrel{\s_{t}}{\longleftarrow} X_{t} = X'\,.
\end{equation}
where we avoid blowing up snc singularities at every step; i.e.,
each centre of blowing up is disjoint from the snc locus 
of the corresponding total transform of $X$ (with respect to a local embedding
of $X$ in a smooth variety); 
see \cite{BDV}. One can also resolve singularities of pairs, preserving ``semi-simple 
normal crossings'' \cite{BV} (see \cite[Prob.\,19]{Ko}). In Theorem \ref{thm:nc}, we can add
the following conditition, considered by Koll\'ar \cite{Ko}:
\begin{enumerate}
\item[(3)] \emph{The morphism $\s$ maps the singular set $\Sing X'$ birationally onto
the closure of $\Sing X^{\mathrm{snc}}$.}
\end{enumerate}

\begin{remark}\label{rem:funct}
Because of the way the invariant
is used, Theorem \ref{thm:nc} and the other desingularization results here are functorial.
For example, Theorem \ref{thm:nc} is functorial with respect to local isomorphisms
(or, more generally, with respect to \'etale or smooth morphisms that preserve the
number of irreducible components at every point). We will not always explicitly mention
functoriality in the statements of the theorems. (See also Remarks \ref{rem:functclean}
and \ref{rem:onfunctclean}.)
Koll\'ar gives another (non-functorial)
proof of Theorem \ref{thm:nc} in \cite{Ko}. All the desingularization results here also
have analytic versions (where the analogue of a morphism that is a finite composite
of blowings-up is a morphism which can be realized by a finite blowing-up sequence
over any relatively compact open set).
\end{remark}

Our proof of Theorem \ref{thm:nc} automatically provides the additional condition
(3) above. Given a morphism $\s: X' \to X$ satisfying the conditions of Theorem 
\ref{thm:nc}, we can also get (3) by successively blowing up every component of 
$\Sing X'$ that does not map birationally onto a component of the closure of 
$\Sing X^{\mathrm{snc}}$ (although here we would have to be a little careful to
preserve the condition of functoriality).

\begin{example}\label{ex:pp}
The \emph{pinch point} (\emph{pp}) or \emph{Whitney umbrella} $X \subset \IA^3$ is
defined by $z^2 + x y^2 = 0$. $X$ has only nc2 singularities outside the pinch point $0$.
There is no birational morphism $\s: X' \to X$ satisfying the analogues of (1), (2) of
Theorem \ref{thm:nc} with nc instead of snc, according to the following argument of
Koll\'ar \cite[\P8]{Ko} (see also Fujino \cite[Cor.\,3.6.10]{Fuji}): 
At any nonzero point of the $x$-axis, $X$ has two local analytic
branches (over $\IC$, say). As we go around the origin, the two branches are interchanged.
This continues to hold after any birational map that is an isomorphism over the generic point
of the $x$-axis, so we cannot eliminate the pinch point without blowing up the $x$-axis.
\end{example}

The desingularization invariant of \cite{BMinv} seems particularly well-suited to studying 
the questions above, as already
evidenced by our proof of Theorem \ref{thm:nc}. One of our goals is to demonstrate
that the invariant is a useful tool for making local computations in
algebraic geometry and singularity theory.
\smallskip

The authors are grateful to Franklin Vera Pacheco for many important comments
on the results in this article.

\subsection{Minimal singularities}\label{subsec:min}
Because the nc-analogue of Theorem \ref{thm:nc} fails, it is interesting to ask the
following (a variant of a question of Koll\'ar).

\begin{question}\label{qu:min}
Can we find the smallest class of singularities $\cS$ with the following properties:
\begin{enumerate}
\item $\cS$ includes all nc singularities;
\item given a reduced variety $X$, there exists a proper (birational) morphism
$\s: X' \to X$ such that
\begin{enumerate}
\item $X' = (X')^\cS$,
\item $\s$ is an isomorphism over $X^{\mathrm{nc}}$\,?
\end{enumerate}
\end{enumerate}
\end{question}

($X^\cS$ denotes the locus of points of $X$ having only singularities in $\cS$,
so that $X^\cS$ includes all smooth points.) We can also ask: Do we get the same 
class of singularities $\cS$ if, in condition (2), we require a morphism $\s$ 
which is a finite composite of admissible blowings-up?

\begin{remarks}\label{rem:min}
(1) We are interested in writing \emph{normal forms} for the singularities in $\cS$; i.e.,
local models for their equivalence classes with respect to 
\'etale coordinate changes (or, equivalently, with respect to completion and finite field
extension).
\smallskip

(2) Normal crossings singularities are singularities of hypersurfaces. We say that
$X$ is a \emph{hypersurface} if, locally, $X$ can be defined by a principal ideal on
a smooth variety. (We say that $X$ is an \emph{embedded hypersurface} if $X
\hookrightarrow Z$, where $Z$ is smooth and $X$ is defined by a principal ideal on $Z$.)
Question \ref{qu:min} can be reduced to the case of a hypersurface using the strong
desingularization algorithm of \cite{BMinv, BMfunct}. The algorithm involves blowing up
with smooth centres in the maximum strata of the Hilbert-Samuel function. The latter
determines the local embedding dimension, so the algorithm first eliminates points of
embedding codimension $> 1$ without modifying nc points.
\end{remarks}

We therefore reduce Question \ref{qu:min} to the case that, locally, $X \hookrightarrow Z$
is an embedded hypersurface, so we want to give normal forms for the singularities in $\cS$
in terms of \'etale local coordinates $(x_1,\ldots,x_n)$ for $Z$. The table in Definition \ref{def:min} following gives normal forms for $\cS$, for embedding dimension $n \leq 4$, and therefore answers Question \ref{qu:min} for varieties $X$ of dimension $\leq 3$
(at least with respect to morphisms that are composites of admissible blowings-up, but
see also Remark \ref{rem:minbirat} below).

\begin{definition}\label{def:min}
Let $\cS$ denote the following class of singularities in $n$ variables, for $n \leq 4$:
\medskip

\renewcommand{\arraystretch}{1.1}
\begin{tabular}{l r l}
$n = 2$ &                   $xy = 0$ & double normal crossings \emph{nc2}\\
&&\\[-.2cm]
$n = 3$ &                   $xy = 0$ & \emph{nc2} \\
             &                 $xyz = 0$ & triple normal crossings \emph{nc3}\\
             &            $z^2 + xy^2 = 0$ & pinch point \emph{pp}\\
&&\\[-.2cm]
$n = 4$ &                  $xy = 0$ & \emph{nc2} \\
             &                $xyz = 0$ & \emph{nc3}\\
             &              $xyzw = 0$ & \emph{nc4}\\
             &            $z^2 + xy^2 = 0$ & \emph{pp}\\ 
             &     $z^2 + (y + 2x^2)(y - x^2)^2 = 0$ & degenerate pinch point \emph{dpp}\\
             &         $x(z^2 + wy^2) = 0$ & product \emph{prod}\\
             &     $z^3 + wy^3 + w^2x^3 -3wxyz = 0$ & cyclic point \emph{cp3}\\
\end{tabular}
\end{definition}
\smallskip

\begin{theorem}\label{thm:min}
Let $X$ denote a reduced variety of pure dimension $n-1$, where $n = 2, 3,\text{ or }4$. Then there is a morphism $\s: X' \to X$ given by a finite sequence of
admissible blowings-up
\begin{equation}\label{eq:blupX.4}
X = X_0 \stackrel{\s_1}{\longleftarrow} X_1 \longleftarrow \cdots
\stackrel{\s_{t}}{\longleftarrow} X_{t} = X'\,,
\end{equation}
such that
\begin{enumerate}
\item[(a)] $X' = (X')^{\cS}$,
\item[(b)] $\s$ is an isomorphism over $X^{\mathrm{nc}}$.
\end{enumerate}
Moreover, the morphism $\s = \s_X$ (or the entire blowing-up sequence
\eqref{eq:blupX.4}) can be realized in a way that is functorial with respect
to \'etale morphisms.
\end{theorem}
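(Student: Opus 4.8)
The plan is to follow the same strategy that proves Theorem~\ref{thm:nc}, using the desingularization invariant $\inv$ of \cite{BMinv} together with the local normal forms that force themselves on us once the constant locus of the invariant has low codimension; the point is that the table in Definition~\ref{def:min} is not an arbitrary list but precisely the list of singularities that cannot be removed by an admissible blowing-up without destroying an nc point. Reduce first to the embedded hypersurface case as in Remark~\ref{rem:min}(2): run the strong embedded desingularization algorithm of \cite{BMinv,BMfunct}, which blows up maximal Hilbert--Samuel strata and thus eliminates points of embedding codimension $>1$ without touching any nc point, so that locally $X\hookrightarrow Z$ is a hypersurface of embedding dimension $n\le 4$ defined by $f=0$. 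Then run the usual resolution algorithm for $f$ (or for the marked ideal attached to $f$), but \emph{stop} the process at any point $a$ where the next center of blowing up would meet $X^{\mathrm{nc}}$; the claim is that at such a stopping point the local normal form of $X$ at $a$ is one of the entries in the table.

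The heart of the argument is therefore a local normal form statement, proved by cases on the value $\inv(a)$ and on $n=\dim Z\le 4$. First I would record that at a point where the algorithm has not yet stopped and cannot proceed, the invariant is constant along a smooth local stratum $S=\{\inv=\inv(a)\}$ through $a$, and $S$ has low codimension in $Z$ (codimension $\le 3$ since $n\le 4$); by functoriality and the structure of the invariant, after a local étale coordinate change adapted to $S$ and to the exceptional divisor, $f$ depends only on the few coordinates transverse to $S$. This reduces each case to a normal-form problem in $1$, $2$, or $3$ transverse variables. The low-dimensional cases (one or two transverse variables) are short: in one transverse variable one gets $x^{\al}=0$, and reducedness forces $x=0$, a smooth point; in two transverse variables the coefficient ideal / companion ideal analysis, combined with the fact that we have not blown up an nc center, yields $xy=0$, $z^2+xy^2=0$ (pinch point), or (over a non-closed field, or with a unit coefficient) the nc2 form $y^2+ux^2$, which after finite field extension and completion is again nc. The genuinely new work is the case of three transverse variables, which is where dpp, prod, and cp3 appear; here one must show that the Weierstrass-type normal form of $f$ in $(x,y,z,w)$ with $\inv$ constant along the $w$-direction (or along a smooth curve) degenerates to exactly $z^2+(y+2x^2)(y-x^2)^2$, $x(z^2+wy^2)$, or the cyclic cubic $z^3+wy^3+w^2x^3-3wxyz$, and that each of these is the terminal object of the stopped algorithm — i.e.\ every admissible blowing-up that improves the invariant necessarily has a center meeting the nc locus. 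I expect this cubic/quartic classification in the three-variable transverse case to be the main obstacle: it requires pushing the invariant computation through a couple of blowings-up to see that no admissible center survives, and it is exactly here that ``the desingularization invariant together with natural geometric information computes the normal form,'' the philosophy advertised in the abstract.

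Once the local normal form list is established, the global assembly is routine and parallels Theorem~\ref{thm:nc}. The algorithm produces a canonical sequence \eqref{eq:blupX.4} of admissible (smooth, snc with the exceptional divisor) blowings-up; at each stage the set of points where we must stop is closed and disjoint from $X^{\mathrm{nc}}$ by construction (we only ever choose centers in the locus $\{\inv = \max\}$, and we have arranged that whenever that locus meets $X^{\mathrm{nc}}$ we are already in normal form, so no blowing up is performed there), giving (b); the process terminates because the invariant strictly drops lexicographically, and at termination every point of $X'$ is either smooth, a former nc point (untouched, still nc), or in normal form from the table, giving (a) $X'=(X')^{\cS}$. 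Functoriality with respect to étale morphisms is inherited from the functoriality of $\inv$ and of the choice of centers in \cite{BMinv,BMfunct}, together with the fact that étale morphisms preserve embedding dimension, the Hilbert--Samuel function, and the number of local branches, hence preserve both the stopping criterion and the normal form; this is the content of Remark~\ref{rem:funct}, and no new idea is needed beyond checking that the stopping rule ``center meets $X^{\mathrm{nc}}$'' is itself étale-local, which it is since $X^{\mathrm{nc}}$ is defined by an étale-local condition.
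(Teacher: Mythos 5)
There is a genuine gap: the central mechanism of your argument --- ``run the usual resolution algorithm but \emph{stop} whenever the next center would meet $X^{\mathrm{nc}}$, and observe that the stopping point must be in normal form'' --- does not work, and the paper's proof is not a refinement of that plan but a different one. The problem is already visible in the paper's Example~\ref{ex:comp} with $n=3$: after a couple of blowings-up the strict transform is $z^2 + x^\al y^2 = 0$ with $(x=0)$ exceptional and $\al\ge 2$, the invariant is $\inv = (2,0,1,0,\infty) = \inv(\mathrm{nc}2)$, and the center prescribed by the standard algorithm is the $x$-axis $(z=y=0)$, which is generically nc2. If you stop here, you are left with the singularity $z^2 + x^\al y^2 = 0$, which is neither nc nor in the table $\cS$. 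If instead you proceed, you blow up nc2 points. Stopping therefore does not produce the table; your assertion that ``whenever that locus meets $X^{\mathrm{nc}}$ we are already in normal form'' is exactly what fails.

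What the paper does instead, and what your proposal is missing, is the Cleaning Lemma (Lemma~\ref{lem:clean} / \S\ref{subsec:clean}). After running the standard algorithm only until $\inv\le\inv(\mathrm{nc}2)$ (away from the isolated nc3 points), one applies a \emph{separate} sequence of blowings-up, not given by the desingularization algorithm: namely resolution of the \emph{monomial marked ideal} $\ucM(\ucI^1)$ sitting on the codimension-one maximal contact subvariety for the truncated invariant $\inv_1$. These blowings-up are $\inv_1$-admissible (so globally well-defined), their centers are contained in the cosupport of $\ucM(\ucI^1)$ and hence disjoint from the nc locus, and they reduce $z^2+x^\al y^2$ step by step to $\al=0$ or $\al=1$, i.e.\ to nc2 or pp. Only after this cleaning is the stratum $(\inv=\inv(\mathrm{nc}2))$ entirely nc2 or pp, at which point the remaining singularities (with strictly smaller $\inv$) are resolved by the standard algorithm over the complement. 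Note also that your termination argument (``the invariant strictly drops lexicographically'') does not cover the cleaning blowings-up: they are $\inv_p$- but not $\inv$-admissible, so $\inv$ can rise during cleaning; termination of that phase comes from resolution of a monomial marked ideal, which is a separate combinatorial fact. Finally, the paper proves the theorem only for $n\le 3$ here and defers $n=4$ (dpp, prod, cp3, and the large transverse-variable classification you flag as the main obstacle) to the sequel \cite{BLM}; your proposal correctly anticipates that this is where the real work lies, but it is not part of the present paper's proof.
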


The list of singularities in the case $n=3$ above was proposed by Koll\'ar \cite{Kolog}.
Theorem \ref{thm:min} for $n \leq 3$ will be proved in this article 
(see \S\ref{subsec:inv.1} below).
The case $n=4$ has been proved in collaboration with Pierre Lairez and is the subject
of the sequel \cite{BLM}. In each case, $\cS$ is the smallest class of singularities
satisfying the theorem; see Remark \ref{rem:minbirat}.

We do not have full lists of candidates for the singularities in $\cS$, for $n \geq 5$, 
though we can make a few remarks: For any $n$, $\cS$ will include a cyclic
point singularity cp($n-1$) which is an irreducible limit of nc($n-1$) singularities
along a smooth curve (see \cite{BLM}). For example, cp3 above is the singularity at the origin
of an irreducible hypersurface having nc3 singularities along the nonnegative $w$-axis.
The cyclic singularity cp$k$ of order $k$ is related to the action of the cyclic
group $\IZ_k$ of order $k$ on $\IC^k$ by permutation of coordinates. Cyclic
singularities are higher-dimensional versions of the pinch point: pp $=$ cp2.

For any $n$, $\cS$ will include singularities that occur as limits of nc($n-1$), according to
the way that the limit factors (i.e., according to an associated monodromy group); the
reducible limits will be various products of cp$k$, $k < n-1$ (where, by convention,
cp1 means a smooth point $x=0$), generalizing prod in theorem \ref{thm:min}.

Any singularity that occurs in an arbitrarily small neighbourhood of a singularity 
in $\cS$ necessarily
also belongs to $\cS$. Degenerate pinch points occur along the nonnegative $x$-axis of
cp3 (see \cite[\S2.2]{BLM}). 
The name comes from the fact that a pinch point can be rewritten as
$z^2 + (y + 2x)(y - x)^2 = 0$ after a coordinate change (see also Lemma \ref{lem:pp}).

An optimistic reader can ask whether, in any dimension $n$, $\cS$ comprises nc singularities,
products of cp$k$ singularities ($k \leq n-1$), and singularities that occur in arbitrarily small 
neighbourhoods of the latter.

There are many interesting variations of Question \ref{qu:min}. For example:

\begin{question}\label{qu:minstrong}
Can we find the smallest class of singularities $\cS'$ with the following properties:
\begin{enumerate}
\item $\cS'$ includes all nc singularities;
\item given a reduced variety $X$, there exists a proper (birational) morphism
$\s: X' \to X$ such that
\begin{enumerate}
\item $X' = (X')^{\cS'}$,
\item $\s$ is an isomorphism over $X^{\cS'}$\,?
\end{enumerate}
\end{enumerate}
\end{question}

Again we can ask: 
Do we get the same class of
singularities if, in condition (2), we require a morphism $\s$ which is a finite composite
of admissible blowings-up? For either Question \ref{qu:min} or \ref{qu:minstrong},
we can also ask: If $X$ is an embedded hypersurface, can we find the smallest 
class of corresponding singularities of the total transform (inverse image) of $X$? Are the
preceding questions well-formulated --- in each case, is there 
a (unique) smallest class of singularities satisfying the conditions stated?

Clearly, $\cS \subset \cS'$, for either version of Questions 
\ref{qu:min} and \ref{qu:minstrong}. In fact, the classes coincide for $n \leq 3$, 
but not in general.

\begin{definition}\label{def:minstrong}
If $n \leq 3$, let $\cS' := \cS$, where the latter is given by Definition \ref{def:min}.
For $n=4$, let $\cS'$ by given by the singularities in $\cS$
together with the following:
\begin{equation}\label{eq:exc}
z^2 + y(wy + x^2)^2 = 0 \quad \text{exceptional singularity \emph{exc}}
\end{equation}
\end{definition}
\smallskip

\begin{theorem}\label{thm:minstrong}
Let $X$ denote a reduced variety of pure dimension $n-1$, where $n = 2, 3,\text{ or }4$. Then there is a morphism $\s: X' \to X$ given by a finite sequence of
admissible blowings-up
\begin{equation}\label{eq:blupX.4}
X = X_0 \stackrel{\s_1}{\longleftarrow} X_1 \longleftarrow \cdots
\stackrel{\s_{t}}{\longleftarrow} X_{t} = X'\,,
\end{equation}
such that
\begin{enumerate}
\item[(a)] $X' = (X')^{\cS'}$,
\item[(b)] $\s$ is an isomorphism over $X^{\cS'}$.
\end{enumerate}
Moreover, the morphism $\s = \s_X$ (or the entire blowing-up sequence
\eqref{eq:blupX.4}) can be realized in a way that is functorial with respect
to \'etale morphisms.
\end{theorem}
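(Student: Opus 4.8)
The plan is to prove Theorem \ref{thm:minstrong} by the same mechanism used for Theorems \ref{thm:nc} and \ref{thm:min}, the three statements differing only in the class of singularities that we decline to blow up. As in Remark \ref{rem:min}(2), one first applies the Hilbert--Samuel part of the desingularization algorithm of \cite{BMinv, BMfunct} to reduce to the case that, locally, $X \hookrightarrow Z$ is an embedded hypersurface in a smooth variety $Z$; since every singularity occurring in $\cS'$ is a hypersurface singularity (each normal form of Definitions \ref{def:min} and \ref{def:minstrong} is a single equation in $n$ variables), the argument of Remark \ref{rem:min}(2) shows this reduction does not modify any point of $X^{\cS'}$, just as it does not modify any point of $X^{\mathrm{nc}}$. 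One then runs a modified version of the algorithm on the total transform of $X$ in which the desingularization invariant $\inv$ of \cite{BMinv} is replaced by a \emph{cleaned} invariant whose constant loci are disjoint from the designated class: the snc locus for Theorem \ref{thm:nc}, the nc locus for Theorem \ref{thm:min}, and the $\cS'$-locus here. Because $\cS' = \cS$ for $n \le 3$ and $\cS' = \cS \cup \{\mathrm{exc}\}$ for $n=4$, this yields at once both $X' = (X')^{\cS'}$ and the isomorphism over all of $X^{\cS'}$.

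The construction of the cleaned invariant rests on the paper's central principle, that $\inv$ together with natural geometric data determines local normal forms. Concretely, I would first compute, for each singularity listed in Definitions \ref{def:min} and \ref{def:minstrong}, the value of $\inv$ on the total transform at such a point, together with the accompanying geometry --- the number of local analytic branches, the monodromy permuting them, the position of the neighbouring nc locus, and the configuration of exceptional hyperplanes --- and prove (via Lemma \ref{lem:pp} for the pinch point and its analogues for dpp, prod, cp3 and exc) that these data force the equation of $X$, in suitable \'etale-local coordinates, to be the tabulated normal form. Using this, one shows that at every stage of the algorithm the points of the total transform lying in $\cS'$ form a set that is recognizable from $\inv$ and the geometry, and one defines the cleaned invariant $\oinv$ to agree with $\inv$ away from that set and to be suitably modified on it, so that the maximal stratum of $\oinv$ avoids the $\cS'$-locus. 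One then verifies that $\oinv$ is still upper semicontinuous with Zariski-closed strata and smooth maximal stratum, that this stratum has only snc with respect to the exceptional divisor (so the associated blowings-up are admissible), and that away from the $\cS'$-locus each such blowing-up strictly decreases $\oinv$, so that the process terminates after finitely many steps. Functoriality for \'etale morphisms follows because $\inv$, the branch count and the monodromy are all functorial for \'etale morphisms that preserve the number of components, hence so is $\oinv$; the birational behaviour of $\Sing$ of the type of condition (3) in Theorem \ref{thm:nc} is obtained exactly as there.

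The main obstacle --- and the place where the hypothesis $n \le 4$ is essential --- is to show that $\cS'$ is \emph{large enough}: that after declining to blow up precisely the $\cS'$-locus, the modified algorithm halts with no singularities worse than those already in $\cS'$; equivalently, that there is no further normal form into which we are forced by an obstruction of the kind in Example \ref{ex:pp} and which lies outside $\cS'$. This amounts to checking that $\cS'$ is closed under the operations the algorithm cannot reverse: passing to a limit of $\cS'$-singularities along a smooth curve contained in the $\cS'$-locus, restricting to an exceptional divisor, and the local factorizations imposed by monodromy. For $n \le 3$, which is the content of the present article, this is a short verification: the only point of $X^{\cS'}\setminus X^{\mathrm{nc}}$ is the pinch point, which is the unique irreducible limit of nc2 singularities along a smooth curve, and (as in Koll\'ar's argument in Example \ref{ex:pp}) any blowing-up that would resolve a pinch point must meet the nc2 locus, so the cleaned algorithm necessarily leaves pinch points fixed.

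For $n=4$ the same analysis must be carried out for dpp, prod, cp3 and --- the genuinely new case, which is exactly why $\cS$ must be enlarged to $\cS'$ in Definition \ref{def:minstrong} --- the exceptional singularity \eqref{eq:exc}; this is the work done in the sequel \cite{BLM}, so for $n = 4$ Theorem \ref{thm:minstrong} is proved there. Combined with the statement (Remark \ref{rem:minbirat}) that no smaller class suffices, the upshot is that $\cS'$ is exactly the class of singularities answering Question \ref{qu:minstrong} in these dimensions; in the present article the construction of the cleaned invariant and the termination argument of the second paragraph, together with the short check above, establish the theorem for $n \le 3$.
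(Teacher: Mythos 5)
Your proposal captures the paper's broad philosophy --- reduce to the embedded hypersurface case, then use $\inv$ together with auxiliary geometric data and cleaning to recognize and preserve the $\cS'$-locus --- but there is a real gap between the meta-description you give and a proof, and your ``short verification'' for $n\le 3$ is not the one that actually works.

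The paper's proof (in \S\ref{subsec:inv.1}) does \emph{not} construct a ``cleaned invariant $\oinv$'' with a single modified maximal stratum. Instead it proceeds in two concrete phases. Phase one: since for $n=3$ both nc3 and pp singularities are \emph{isolated}, with only nc2 nearby, the set of points of $X\setminus\{\mathrm{nc}3,\mathrm{pp}\}$ with $\inv > \inv(\mathrm{nc}2) = (2,0,1,0,\infty)$ is closed and disjoint from $\{\mathrm{nc}3,\mathrm{pp}\}$, so one runs the ordinary desingularization algorithm (with $\inv$-admissible centres) until the max of $\inv$ over this complement falls to $\inv(\mathrm{nc}2)$. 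Phase two: one then applies the Cleaning Lemma on the stratum $(\inv = \inv(\mathrm{nc}2))$, using centres that are $\inv_1$-admissible but \emph{not} $\inv$-admissible, to reduce the local normal form $z^2 + x^\al y^2 = 0$ (as in \eqref{eq:ncp}) to $\al \in \{0,1\}$, i.e.\ to nc2 or pp. Finally one resolves what remains. Your proposal never specifies this split, never says which blowings-up are admissible for which truncation of $\inv$, and never gives a termination argument beyond the bare assertion that the cleaned process halts; but termination is exactly where the paper leans on the Cleaning Lemma, which is monomial resolution of a marked ideal $\ucM(\ucI^p)$ on a maximal contact subspace.

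Your ``short verification'' for $n\le 3$ is also not the right argument. You appeal to the Koll\'ar monodromy argument of Example~\ref{ex:pp} to conclude the cleaned algorithm ``necessarily leaves pinch points fixed.'' That argument shows pinch points \emph{cannot be removed} by any morphism isomorphic over the nc locus --- it proves minimality of $\cS'$ (Remark~\ref{rem:minbirat}), not that the proposed process terminates at $\cS'$-singularities. What the paper actually uses is the elementary observation that for $n=3$, pp and nc3 are isolated, so the complement $X\setminus\{\mathrm{nc}3,\mathrm{pp}\}$ is open and one can simply run the algorithm there; this is precisely why $\cS' = \cS$ when $n\le 3$, and it is stated that way in the paper: ``if $n=3$, then singular points of type pp (as well as nc3) are isolated ... so we can simply repeat the proof above, changing $\{\mathrm{nc}3\}$ to $\{\mathrm{nc}3,\mathrm{pp}\}$.'' Your statement that pp is ``the unique irreducible limit of nc2 singularities along a smooth curve'' is a different (and for $n=3$ vacuous) statement, since in dimension $3$ there are no curves of pinch points. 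So while the proposal points in the right direction, as written it does not contain the proof: the construction of $\oinv$ is not given, the termination argument is asserted rather than established, and the wrong tool (monodromy) is invoked where the paper uses isolation of pp and nc3.
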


Again the case $n=4$ is proved in \cite{BLM}. See \S\ref{subsec:inv.1} for the 
case $n=3$. 
As before, $\cS'$ is the smallest class of singularities satisfying the theorem.
The exceptional singularity is a limit of dpp singularities that cannot be eliminated 
by blowings-up.
(See Lemma \ref{lem:pp}) below and \cite[Rmk.\,1.6]{BLM}).

\begin{remark}\label{rem:minbirat}
Resolution of singularities of an embedded hypersurface can be reformulated as
``log-resolution'' of singularities of a Weil divisor $D$ on a variety $Z$. Stated in this way, 
Question \ref{qu:min} is the formulation of Koll\'ar \cite{Ko}, where $\cS$ is the smallest
class of singularities that includes all normal crossings singularities and satisfies condition 
(2) of Question \ref{qu:min} for the support of the birational transform of $D$. 

In the case $n = \dim Z = 3$, $\cS$ is the \emph{unique} smallest class of singularities
satisfying this version of Question \ref{qu:min}, in the following sense. If $\Supp D$
has a pp singularity $z^2 + xy^2 = 0$, with respect to a coordinate chart $U$ of $Z$
at a point $a=0$, then any proper birational morphism $U' \to U$ which is an isomorphism
precisely over $U \setminus \{a\}$, factors through the blowing-up of $\{a\}$ (by the
universal-mapping property of blowing up).

Likewise for $n=4$. In the case of a cyclic point singularity cp3; i.e., a hypersurface
$X \subset Z$ defined in local coordinates by $z^3 + wy^3 + w^2x^3 -3wxyz = 0$,
any birational morphism $Z' \to Z$, which modifies the cp3 singularity but is an isomorphism over $Z\setminus \{\text{dpp}, \text{cp3}\}$, factors through the blowing-up either of
$\{\text{cp3}\}$ or of $\{\text{dpp}, \text{cp3}\} = (z=y=w=0)$. 
But both of these blowings-up produce a new cp3 singularity.
\end{remark}

\begin{remark}\label{rem:totaltransf}
Our proof of
Theorem \ref{thm:min} also gives normal forms or local models for the singularites of the
total transform of $X$, corresponding to $\cS$. (Equivalently, it gives local models for the
``transform'' of a divisor $D$, where the latter is defined 
as the support of the birational transform
plus the exceptional divisor). For example, in the case $n=3$, the following
table gives the possible (reduced) exceptional divisors.
\medskip

\renewcommand{\arraystretch}{1.1}
\begin{tabular}{r | l}
               singularity\,\, & \,exceptional divisor\\\hline
                   $x = 0$ & $(y=0)$\\
                                & $(y=0) + (z=0)$\\
                                & $(x + z^2 = 0)$\\
                 $xy = 0$ & $(z=0)$\\
               $xyz = 0$ & \\
   $z^2 + xy^2 = 0$ & (x=0)
\end{tabular}
\medskip

The third line in the table gives the possibility of a non-transverse exceptional divisor
at a smooth point of the variety. This cannot be eliminated because it occurs in a
neighbourhood of the origin in the last line (pp).
\end{remark}

Following is a theorem on resolution except for codimension one singularities 
which can be
eliminated by normalizing. It can be considered also as  a ``higher-dimensional version'' 
of Theorems \ref{thm:min}, \ref{thm:minstrong} in the case $n=3$.

\begin{theorem}\label{thm:high3}
Let $X$ denote a reduced variety (in any dimension) 
and let $X^{\mathrm{ncp}}$ denote the open subset
of $X$ consisting of smooth points, double normal crossings points ($xy=0$) and
pinch points ($z^2 + xy^2 = 0$). Then there exists a morphism $\s: X' \to X$ which is
a finite composite of admissible blowings-up, such that
\begin{enumerate}
\item $X' = (X')^{\mathrm{ncp}}$;
\item $\s$ is an isomorphism over $X^{\mathrm{ncp}}$; 
\item $\Sing X'$ maps birationally onto the closure of $\Sing X^{\mathrm{ncp}}$.
\end{enumerate}
\end{theorem}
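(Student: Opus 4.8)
The plan is to reduce Theorem~\ref{thm:high3} to a local analysis of the constant locus of the desingularization invariant $\inv$ of \cite{BMinv}, exactly as in the (sketched) proof of Theorem~\ref{thm:nc}. First I would apply the strong desingularization algorithm of \cite{BMinv, BMfunct} to $X$ (locally embedded as a hypersurface after the Hilbert--Samuel preliminary reductions described in Remark~\ref{rem:min}(2)), but stop the algorithm \emph{early}: instead of running it to completion, one monitors the year-by-year (i.e.\ step-by-step) behaviour of $\inv$ and only blows up centres disjoint from $X^{\mathrm{ncp}}$. The key point, which makes the ``ncp'' locus the natural stopping place, is that at a point $a$ lying outside $X^{\mathrm{ncp}}$ one has a definite lower bound on the invariant: either $\inv(a)$ exceeds the value it takes at nc and pp points, or it equals that value but the residual (``companion'') ideal is not in the monomial normal form associated to nc/pp singularities. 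So the stopping criterion is: keep blowing up while $\{\inv > v_0\}$ is nonempty, where $v_0$ is the (finite, explicitly computable) set of invariant values realized by smooth, nc2, and pp points, together with the requirement that the coefficient ideals at those values be monomial of the nc/pp type.

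The core of the argument is then a \emph{normal-form lemma}: if $\inv(a)$ equals one of the nc/pp values and the centres prescribed by the resolution algorithm at $a$ are already disjoint from everything blown up so far, then in suitable \'etale coordinates $X$ is defined near $a$ by one of $xy=0$ or $z^2+xy^2=0$. This is proved the way such statements are proved throughout the paper: the invariant records the orders of the successive coefficient/companion ideals and the intersection pattern with the accumulated exceptional divisor $E$; having fixed those numerical data, one reads off that the defining ideal must be, up to a unit, $x_1^{\alpha_1}\cdots$ (the nc case) or, when the first entry of $\inv$ is $2$ with a companion ideal of order $1$ whose own invariant forces a single further coordinate, $z^2 + (\text{unit})\cdot x y^2$; a final change of coordinates (for the nc case clearing units into the $x_i$, for pp absorbing the unit $u$ by noting $z^2+uxy^2 = z^2 + x(uy^2)$ after rescaling $x\mapsto x$, $y\mapsto y$, or more carefully $z^2 + uxy^2\sim z^2+xy^2$ over an \'etale extension) puts it in the table form. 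One must also check the dictionary entry already anticipated in Remark~\ref{rem:totaltransf}: that the only exceptional configurations surviving at such points are the transverse ones listed there (including the mildly non-transverse $(x+z^2=0)$ at a smooth point), since those occur in arbitrarily small neighbourhoods of a pp and hence cannot be removed.

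Having the normal-form lemma, condition~(1) follows because the process terminates --- the algorithm of \cite{BMinv} terminates, and we are running a truncation of it --- and at termination every point of $X'$ satisfies the stopping criterion, hence by the lemma lies in $(X')^{\mathrm{ncp}}$. Condition~(2) follows from the standard fact that every centre chosen is contained in $\{\inv>v_0\}$, which by the lemma (applied contrapositively, using that $\inv$ does not \emph{decrease} below its generic value on an nc/pp stratum under admissible blowings-up away from that stratum, by the functoriality and semicontinuity of $\inv$) is disjoint from $X^{\mathrm{ncp}}$; thus $\s$ is an isomorphism there. Condition~(3) is obtained exactly as for Theorem~\ref{thm:nc}(3): after the main process, successively blow up each component of $\Sing X'$ not mapping birationally onto a component of $\overline{\Sing X^{\mathrm{ncp}}}$ --- each such component lies over a lower-dimensional piece and this terminates --- while noting (as in the discussion after Remark~\ref{rem:funct}) that these extra blowings-up can be organized functorially. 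Functoriality of the whole construction with respect to \'etale morphisms is inherited from that of $\inv$ and the resolution algorithm, with the only care needed being the count of irreducible components at each point, precisely as flagged in Remark~\ref{rem:funct}.

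The main obstacle I expect is the normal-form lemma, specifically pinning down the pp case: one must verify that the invariant value plus exceptional-divisor data genuinely \emph{characterizes} $z^2+xy^2=0$ among all hypersurface germs with that invariant --- ruling out, e.g., degenerate pinch points or higher cyclic points, which have strictly larger invariant only after one tracks a \emph{secondary} chart of the algorithm --- and that the unit $u$ in $z^2+uxy^2$ can always be absorbed (over an \'etale extension, or after an admissible blowing-up if it cannot be absorbed locally, which would then feed back into the termination argument). A secondary technical point is ensuring that the truncated algorithm's centres remain admissible (smooth, snc with $E$); this is automatic from \cite{BMinv} since we are only \emph{omitting} some terminal blowings-up, never adding new ones, but it should be stated carefully so that the output blowing-up sequence \eqref{eq:blupX} genuinely consists of admissible blowings-up.
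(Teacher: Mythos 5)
Your overall architecture — truncate the desingularization algorithm, stop when $\inv$ reaches the values realized at nc2/pp points, prove a normal-form lemma at that stratum, then resolve the rest by admissible blowings-up — is the right shape, and matches part~(I)--(III) of the paper's proof. But two of the steps you treat as routine are precisely the places where the argument is nontrivial, and your sketch for them does not go through.

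First, the characterization gap. You propose a normal-form lemma asserting that once $\inv(a)$ takes the pp value $(2,0,3/2,0,1,0,\infty)$ (plus ``exceptional-divisor data''), the germ must be $z^2+(\text{unit})\cdot xy^2$. This is false: $z^2+y^3+x^3=0$ has \emph{exactly} the same invariant as the pinch point (the paper points this out explicitly just before Lemma~\ref{lem:pp}), and no amount of extra bookkeeping of the exceptional configuration distinguishes them at a year-zero point. You do flag this worry yourself, but your suggested fix --- that dpp and cyclic points ``have strictly larger invariant only after one tracks a secondary chart'' --- doesn't address the isolated-singularity case. The actual fix in the paper is \emph{geometric}, not invariant-theoretic: Lemma~\ref{lem:pp} shows that $\inv(a) = (2,0,3/2,0,1,0,\infty)$ \emph{together with} the condition that $\Sing X$ has codimension~$2$ at $a$ characterizes pp. The proof strategy must therefore include a step (present in part~(I) of the paper) where one blows up every component of $(\inv = \inv(\mathrm{pp}))$ that is \emph{not generically pp}, i.e.\ where $\Sing X$ is too small. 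Without this input, your normal-form lemma simply isn't true.

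Second, and more structurally, arriving at the stratum $(\inv = \inv(\mathrm{pp}))$ does not put the germ into normal form, even locally-analytically and even granted the codimension condition. What one gets there is
$$
z^2 + u^\al\left(y+u^\be x\right)^2\left(y - 2u^\be x\right) = 0,
$$
where $u^\al$, $u^\be$ are nontrivial monomials in exceptional divisors. These are not units, so they cannot be ``absorbed'' by an \'etale coordinate change; eliminating them requires \emph{further, actively chosen} admissible blowings-up. This is exactly what the Cleaning Lemma~\ref{lem:clean} supplies: it resolves the monomial marked ideal $\ucM(\ucI^p)$ by invariantly-defined $\inv_p$-admissible centres (not $\inv$-admissible ones) so that the exceptional exponents drop. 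Your proposal has no counterpart to this step — you stop the algorithm when $\inv$ reaches $v_0$ and assume the normal form is then present, but without cleaning it generally is not. And even after cleaning, the residual case $|\al|=1$ (e.g.\ $z^2+u_1(y+x)^2(y-2x)=0$) is not yet pp and requires one more explicit blowing-up of a codimension-$4$ centre before cleaning again; this is a genuine extra move, not an artifact. A similar separation issue arises in part~(II): components of the prescribed centres can meet at the pp locus and must first be separated by preliminary blowings-up (as in the proof of Theorem~\ref{thm:gennc}) before they can be blown up admissibly — your sketch glosses over this.

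In short: the high-level plan is right, but the ``normal-form lemma'' is the whole content of the theorem and your proposal for proving it lacks both of the paper's essential ingredients — the codimension-$2$ criterion of Lemma~\ref{lem:pp} and the Cleaning Lemma machinery. These cannot be routed around by tracking the invariant more carefully or by coordinate changes.
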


Again the theorem can be realized functorially, and it is easy to write local models
for the singularities of the total transform. Koll\'ar proves the assertion of Theorem
\ref{thm:high3} with a proper birational morphism $\s$ \cite[Thm.\,16]{Ko}. Note that
the term ``pinch point'' in Theorem \ref{thm:high3} means a hypersurface
singularity of the form $z^2 + xy^2 = 0$ in any number of variables $x,y,z,\ldots\,$.

Note that, in Theorem \ref{thm:min} in the case $n=3$, pinch points are isolated.
Theorem \ref{thm:high3} has an important new feature (which also occurs in the
case $n=4$ of Theorem \ref{thm:min}) --- any new singularities that occur as limits
of pinch points can be eliminated. Of course, Theorem \ref{thm:min} for $n=4$ 
suggests an analogue of Theorem \ref{thm:high3} with normal crossings singularities 
of order up to 3; we have not yet been able to prove this.

One can ask whether there are interesting relationships between the questions
above and other classification problems in singularity theory. For example, the
singularities in $\cS$ when $n=3$ are the same as those with occur for the images
of stable differentiable mappings $\vp: M^2 \to N^3$ (between manifolds of the
dimensions indicated). This question reflects a point of view towards resolution
of singularities suggested to us many years ago by Ren\'e Thom.

\subsection{The desingularization invariant as a computational tool}\label{subsec:inv.1}
Our proofs of the results in this article are based on using the desingularization invariant 
of \cite{BMinv} as a tool for computing and simplifying local normal forms. As an
illustration, we will outline proofs of Theorems \ref{thm:min} and \ref{thm:minstrong} 
in the case $n=3$ in this subsection. 

In the Appendix, we will try to provide a working knowledge of the desingularization
algorithm and the invariant as they are used here, for a reader not necessarily familiar
with a complete proof of resolution of singularities. Suppose that $X \hookrightarrow Z$
is an embedded hypersurface, where $Z$ is smooth. Let $\inv = \inv_X$ denote the
desingularization invariant for $X$. We recall that $\inv$ is defined iteratively on the
strict transform $X_{j+1}$ of $X =X_0$ for any finite sequence of $\inv$-\emph{admissible} 
blowings-up 
\begin{equation}\label{eq:finblup}
Z = Z_0 \stackrel{\s_1}{\longleftarrow} Z_1 \longleftarrow \cdots
\stackrel{\s_{j+1}}{\longleftarrow} Z_{j+1}\,.
\end{equation}
(A blowing-up is $\inv$-\emph{admissible} if it is admissible and $\inv$ is locally
constant
on its centre.) In particular, $\inv(a)$, where $a \in X_{j+1}$ depends not only on $X_{j+1}$
but also on the \emph{history} of blowings-up (\ref{eq:finblup}).

Let $a \in X_j$. then $\inv(a)$ has the form
\begin{equation}\label{eq:invintro}
\inv(a) = (\nu_1(a), s_1(a), \ldots, \nu_t(a), s_t(a), \nu_{t+1}(a))\,,
\end{equation}
where $\nu_k(a)$ is a positive rational number (``residual multiplicity'') if $k\leq t$, 
each $s_k(a)$ is a nonnegative integer (which counts certain components of the exceptional
divisor), and $\nu_{t+1}(a)$ is either $0$ or $\infty$.
The successive pairs $(\nu_k(a),s_k(a))$ are defined inductively over
\emph{maximal contact} subvarieties of increasing codimension.

Let $\inv_k$ denote the truncation of $\inv$ after the $k$'th pair in (\ref{eq:invintro})
($\inv_k := \inv$ if $k > t$).
Then $\inv_k$ can be defined iteratively over a sequence of $\inv_k$-admissible
blowings-up (\ref{eq:finblup}). For each $k$, $\inv_k$ is upper semicontinuous, and
also \emph{infinitesimally upper-semicontinuous} in the sense that $\inv_k$ can only
decrease after blowing up with $\inv_k$-admissible centre.

It is easy to see that, in \emph{year zero} (i.e., if $j=0$), then $\inv(a) = (2,0,1,0,\infty)$
if and only if $X$ has a double normal crossings singularity $z^2 + y^2 = 0$ at $a$.
Some other year-zero hypersurface examples:
\medskip

\renewcommand{\arraystretch}{1.15}
\begin{tabular}{r c l}
                   $x = 0$ & \,smooth\, & $\inv(0) = \inv(\mathrm{nc}1) := (1,0,\infty)$\\
               $x_1x_2\cdots x_k = 0$ & nc$k$ & 
                                  $\inv(0) = \inv(\mathrm{nc}k) := (k,0,1,0,\ldots,1,0,\infty)$\\
   $z^2 + xy^2 = 0$ & pp & $\inv(0) = \inv(\mathrm{pp}) := (2,0,3/2,0,1,0,\infty)$
\end{tabular}
\medskip

\noindent
(where, for nc$k$, there are $k-1$ pairs $(1,0)$). For $k \geq 3$, nc$k$ is 
not characterized by the value of $\inv$; for example, the singularity 
$x_1^k + x_2^k + \cdots x_k^k = 0$ also has $\inv(0) = (k,0,1,0,\ldots,1,0,\infty)$
with  $k-1$ pairs $(1,0)$.

Consider the case $\dim Z = 3$ and now suppose that $a \in X_j$, \emph{for an
arbitrary year} $j$. Then $\inv(a) = \inv(\mathrm{nc}2) = (2,0,1,0,\infty)$ 
if and only if $X_j$ can be
defined near $a$ by an equation
\begin{equation}\label{eq:ncp}
z^2 + x^\al y^2 = 0,
\end{equation}
where $\al$ is a positive integer and $(x=0)$ is an \emph{exceptional divisor}.
If we blow up with centre $z = x = 0$, the strict transform $X_{j+1}$ of $X_j$ is
given locally by
$$
z^2 + x^{\al -2} y^2 = 0 . 
$$
(The preceding blowing-up is $\inv_1$-admissible). After finitely many such blowings-up,
we get either $\al = 0$ or $\al = 1$; i.e., we get either
\begin{align*}
z^2 + y^2 &= 0 \quad \mathrm{nc}2\\
\mathrm{or} \quad z^2 + x y^2 &=0 \quad \mathrm{pp}.
\end{align*}

A crucial point is that the blowings-up we have described locally above are
actually globally-defined $\inv_1$-admissible blowings-up; the ideal $(x^\al)$
is the \emph{monomial part} of a \emph{coefficient marked ideal} defined on
a \emph{maximal contact hypersurface} (($z=0$) at the point $a$ above), at any
point of $(\inv_1 = (2,0)) := \{p: \inv_1(p) = (2,0)\}$. The blowings-up above, to 
reduce $\al$ to $0$ or $1$, constitute ``combinatorial or monomial
resolution of singularities''
of the monomial marked ideal. We will call this simplification of (\ref{eq:ncp}) by
resolution of singularities of the monomial marked ideal a \emph{cleaning} or
an application of the \emph{cleaning lemma} (see Section \ref{sec:clean}). 
Note the blowings-up
involved in applying the cleaning lemma above are $inv_1$- but not $\inv$-admissible.
See the Appendix for details of the ideas above.

\begin{proof}[Proof of Theorems \ref{thm:min} and \ref{thm:minstrong}, case $n=3$.]
We will first show that, given a reduced variety $X$ of dimension $2$, there exists 
a morphism $\s: X' \to X$ which is a finite composite of admissible blowings-up, 
as required in Theorem \ref{thm:min}.

Singular points of type nc3 are isolated, and each have only nc2 singularities in
some neighbourhood. Therefore, the points in the complement of $\{$nc3$\}$ with
$\inv > \inv(\mathrm{nc}2) = (2,0,1,0,\infty)$ form a closed set disjoint from $\{$nc3$\}$. 
So we can blow up with closed $\inv$-admissible centres with $\inv > \inv(\mathrm{nc}2)
= (2,0,1,0,\infty)$, until the maximum value of the invariant over the complement
of $\{$nc3$\}$ is $\leq \inv(\mathrm{nc}2)$. 

We now apply the cleaning lemma to blow up until every point of the stratum
$(\inv = \inv(\mathrm{nc}2))$ is either nc2 or pp. (There are now no other singular points
in a neighbourhood of this stratum.) The centres of blowing up involved in using the
cleaning lemma are disjoint from a neighbourhood of $\{$nc3$\}$.

We can now use the desingularization algorithm to resolve any singularities (i.e., to 
reduce to $\ord_{\cdot}X = 1$) in the
complement of $(\inv = \inv(\mathrm{nc}2))$ and the original $\{$nc3$\}$, by
admissible blowings-up. This suffices
to prove Theorem \ref{thm:min} in the case $n=3$. 

In order to prove Theorem \ref{thm:minstrong} in the case $n=3$ (in particular, to show that $\cS = \cS'$ in this case),
note that, if $n=3$, then singular points of type pp (as well as nc3) are isolated, and each 
have only nc2 singularities in some neighbourhood. So we can simply repeat the proof above,
changing ``$\{$nc3$\}$'' to ``$\{$nc3, pp$\}$'' wherever the former occurs. 
\end{proof}

\begin{remark}\label{rem:totalsings}
The argument above provides the normal forms listed in Remark \ref{rem:totaltransf}
for the total transform of $X$ at every singular point (i.e., nc3, pp, or nc2) of the total
transform. If, in addition to Theorems \ref{thm:min} and \ref{thm:minstrong} 
in the case $n=3$ , we want to get the normal forms for the
total transform listed in Remark \ref{rem:totaltransf} at all points, we may need to make
additional blowings-up of smooth points of the latter. 
The argument is similar to that above,
but we defer it to Section \ref{sec:pp} in order to take advantage of notions introduced
in Sections \ref{sec:clean} and \ref{sec:nc}.
\end{remark}

\subsection{Comparison with the desingularization algorithm}\label{subsec:ex}
The following example illustrates the way the cleaning lemma is used above, in
comparison with the ``monomial case'' of the desingularization algorithm (see
\cite[p.\,628]{BMfunct}).

\begin{example}\label{ex:comp} 
Let $X \subset \IA^3$ denote the hypersurface $(z^2 + x^3 y^2 = 0)$. We first
consider the desingularization algorithm applied to $X$. (A reader unfamiliar with
the computations below can refer to the Appendix.) 
\smallskip

\noindent
\emph{Year zero.} $\inv(0) = (2,0,5/2,0,1,0,\infty)$. The centre $C_0$ of the first blowing-up
$\s_1$ is $\{0\}$.
\smallskip

\noindent
\emph{Year one.} The total transform of $X_0 = X$ in the $x$-\emph{coordinate chart} (the chart given by substituting $(x, xy, xz)$ in place of $(x,y,z)$) is 
$$
x^2(z^2 + x^3y^2) = 0.
$$
(For simplicity of notation, we are again writing $(x,y,z)$ for the coordinates
after blowing up.) The strict transform $z^2 + x^3y^2 = 0$ has the same singularity at the 
origin as in year zero.
Now, however, $\inv(0) = (2,0,1,1,1,0,\infty)$. The centre $C_1$ of the blowing-up $\s_2$
is again $\{0\}$.
\smallskip

\noindent
\emph{Year two.} The total transform in the $x$-chart is given by
$$
x^4(z^2 + x^3y^2) = 0.
$$
The strict transform again has the same singularity at $\{0\}$! Now, however,
$\inv(0) = (2,0,1,0,\infty)$ and the next centre of blowing-up $C_2$ is the $x$-axis
$(z=y=0)$. Note that $C_2$ is nc2 when $x \neq 0$.
\smallskip

\noindent
\emph{Year three.} The total transform in the $y$-chart (given by the substitution
$(x,y,yz)$) is
$$
x^4y^2(z^2 + x^3) = 0.
$$
We are now in the monomial case of the desingularization algorithm; $\inv(0)
= (2,0,0)$. The next blowing-up (centre $(z=x=0)$) resolves the singularities of
the strict transform, but additional blowings-up are needed to make the total
transform simple normal crossings.
\smallskip

By comparison, if we follow the algorithm involved in the proof of Theorem
\ref{thm:min} in \S\ref{subsec:inv}, we would take a different route starting
in year two, when $\inv(0) = (2,0,1,0,\infty)$: We would apply the cleaning
lemma, which tells us to blow up with centre $C'_2 := (z=x=0)$ (as in year
three above). We then get a pinch point
$$
x^6(z^2 + xy^2) = 0
$$
(without blowing up nc2 points). The cleaning lemma is essentially the
monomial case of resolution of singularities, but (with reference to the
algorithm of \cite{BMinv, BMfunct}) applied at an intermediate step, rather
than after reduction to the monomial case.
\end{example}

\section{Cleaning Lemma}\label{sec:clean}

Let $\ucI = (Z,N,E,\cI,d)$ denote a marked ideal (see \S\ref{subsec:markedideal}) and
let $\cI = \cM(\ucI)\cdot\cR(\ucI)$ denote the factorization of $\cI$ into monomial
and residual parts (see \S\ref{subsec:monres}). The ideal $\cM(\ucI)$ is locally generated by a monomial
in components of the exceptional divisor, whose exponents divided by $d$ are invariants
of the equivalence class of $\ucI$ (see Definition \ref{def:invts}).
Set $\ucM(\ucI) = (\cM(\ucI), d)$. Then $\cosupp \ucM(\ucI)
\subset \cosupp \ucI$ and any admissible sequence of blowings-up of $\ucM(\ucI)$ is 
admissible for $\ucI$.

In general, however, it is not true that the transforms $\ucI'$ and $\ucM(\ucI)'$ by an
admissible blowing-up of $\ucM(\ucI)$ satisfy $\ucM(\ucI') = \ucM(\ucI)'$ (since 
exceptional divisors might factor from the pull-back of $\cR(\ucI)$).

\begin{lemma}\label{lem:clean}
Suppose that $\ucR(\ucI) := (Z,N,E,\cR(\ucI), \ord\,\cR(\ucI))$ admits a maximal contact
hypersurface $P$ at some point of its cosupport (in particular, $E$ is 
transverse to $P$; see Definitions \ref{def:transverse}, \ref{def:max}).
Then, after transformation of $\ucI$ by an admissible sequence of blowings-up of $\ucM(\ucI)$,
we can assume that $\cosupp \ucM(\ucI)$ is disjoint from the strict transform of $P$.
\end{lemma}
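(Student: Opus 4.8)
The plan is to compare, on the maximal contact hypersurface $P$, the "combinatorial" blowing-up sequence coming from $\ucM(\ucI)$ with a genuine monomial resolution on $P$, and to iterate. First I would restrict $\ucI$ to $P$: by definition of maximal contact, $\cR(\ucI)$ (hence $\cI$) has a coefficient ideal on $P$ whose cosupport equals $\cosupp\ucR(\ucI)$ in a neighbourhood of the chosen point, and the exceptional divisor $E$ meets $P$ transversally, so $E\cap P$ is a simple normal crossings divisor on $P$. The monomial part $\cM(\ucI)$ pulls back to a monomial $\cM_P$ in the components of $E\cap P$, with exponents the same invariants (divided by $d$) as before. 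The geometric content of the lemma is that, although blowing up $\ucM(\ucI)$ is globally defined and admissible for $\ucI$, it need not make $\cosupp\ucM(\ucI)$ disjoint from the strict transform of $P$, because after each blowing-up new exceptional components can factor off the pulled-back residual part and get absorbed into a new monomial part. So the strategy is: blow up according to $\ucM(\ucI)$, recompute, and show that a suitable invariant strictly decreases, so the process terminates with $\cosupp\ucM(\ucI)$ disjoint from (the strict transform of) $P$.

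The key steps in order: (1) Set up the restriction to $P$ and identify $\cosupp\ucM(\ucI)$ with the cosupport of the monomial marked ideal $(\cM_P,d)$ on $P$ near the point in question; note $\cosupp\ucM(\ucI)\subset\cosupp\ucI\subset P$ locally, so it suffices to work on $P$. (2) Perform one admissible blowing-up of $\ucM(\ucI)$; since its centre lies in $\cosupp\ucM(\ucI)\subset P$ and is admissible, $P$ has a well-defined strict transform $P'$, the exceptional divisor remains transverse to $P'$, and the restriction of the new monomial marked ideal to $P'$ is computed combinatorially from the old one plus the new exceptional variable. (3) Run monomial ("combinatorial") resolution of singularities of the monomial marked ideal on $P$ — this is exactly the monomial case of the desingularization algorithm, and by the results recalled in \S\ref{subsec:monres} and the Appendix it terminates after finitely many blowings-up with the cosupport of the monomial part empty on $P$, i.e. disjoint from the strict transform of $P$. (4) Observe that all the centres used are $\cosupp\ucM(\ucI)$-admissible, hence admissible for $\ucI$, and that the whole sequence is a sequence of blowings-up of $\ucM(\ucI)$ in the required sense, possibly after re-reading which components of $E$ are "new" at each stage.

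The main obstacle I expect is step (3) bookkeeping: after a blowing-up of $\ucM(\ucI)$, the transform $\ucM(\ucI)'$ of the monomial marked ideal is in general \emph{not} the monomial part $\cM(\ucI')$ of the transformed marked ideal (this is precisely the phenomenon flagged in the paragraph before the lemma), so one cannot naively "resolve $\cM(\ucI)$ and be done". The point is that extra exceptional divisors factoring from $\cR(\ucI)'$ only make $\cM(\ucI')\supset\cM(\ucI)'$ \emph{larger}, hence $\cosupp\cM(\ucI')\subset\cosupp\ucM(\ucI)'$; so driving $\cosupp\ucM(\ucI)'$ empty is stronger than what is literally asked, and the real task is to show the combinatorial resolution of the monomial marked ideal on $P$ is compatible with — and unaffected by — these extra factors, i.e. that we may always resolve the monomial marked ideal $\ucM(\ucI)$ as an autonomous combinatorial object on $P$ and that this remains an admissible sequence for $\ucI$ throughout. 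I would handle this by fixing the sequence of centres from the combinatorial resolution of $\ucM(\ucI)$ once and for all (this is purely a statement about exponent vectors over a simple normal crossings divisor), checking admissibility for $\ucI$ at each step using $\cosupp\ucM(\ucI)^{(i)}\subset\cosupp\ucI^{(i)}$, and concluding that at the end $\cosupp\ucM(\ucI)=\emptyset$ along the strict transform of $P$, which is exactly the assertion.
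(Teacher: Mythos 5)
Your proposal has the right general shape (restrict to $P$, run combinatorial resolution of $\ucM(\ucI)$, keep track of admissibility), and you correctly identify the central danger — after a blowing-up of $\ucM(\ucI)$, the transform $\ucM(\ucI)'$ need not equal the recomputed monomial part $\cM(\ucI')$ — but the mechanism you invoke to dispose of it is backwards, and this is not a small slip: it is the whole content of the lemma.

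You claim that extra exceptional divisors factoring from the pull-back of $\cR(\ucI)$ make $\cM(\ucI') \supset \cM(\ucI)'$, and hence $\cosupp\,\cM(\ucI') \subset \cosupp\,\ucM(\ucI)'$, so that resolving $\ucM(\ucI)$ is \emph{more} than what is asked. In fact the inclusion is the other way. If $\nu > 0$ is the order of $\cR(\ucI)$ along the centre $C$, then $\s^*\cR(\ucI) = y_{\exc}^{\nu}\,\cR(\ucI)_{\mathrm{st}}$, so $\cM(\ucI') = y_{\exc}^{\nu}\,\cM(\ucI)'$. This is a \emph{smaller} ideal (bigger exponents), hence has a \emph{larger} cosupport: $\cosupp(\cM(\ucI'),d) \supset \cosupp(\ucM(\ucI)',d)$. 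So driving $\cosupp\,\ucM(\ucI)'$ to the empty set is \emph{weaker} than the conclusion you need; the new monomial factor can leave a nonempty cosupport on $P'$ even after $\ucM(\ucI)'$ is resolved. Once this inclusion is corrected, the ``real task'' you flag is not resolved by your plan of ``fixing the sequence of centres once and for all and checking admissibility'' — that checks that the sequence is legal, not that it achieves the goal.

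The ingredient you are missing is the one the paper's proof is built on: because every centre in combinatorial resolution of $\ucM(\ucI)$ has the form $C = D \cap N$ with $D$ an intersection of components of $E$, and because $E$ is transverse to $P$ (part of the hypothesis via maximal contact), each such $C$ is transverse to $P$. This transversality is what guarantees that, \emph{on the strict transform of $P$}, no exceptional divisor factors from the pull-back of $\cR(\ucI)$, so that $\cM(\ucI') = \cM(\ucI)'$ holds along $P'$ even though it may fail off $P'$. With that observation in hand, resolving $\ucM(\ucI)$ genuinely does clear $\cosupp\,\ucM(\ucI)$ off the strict transform of $P$, which is all the lemma asserts. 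Your proposal never uses transversality of $E$ with $P$, which is a red flag: that hypothesis is there precisely to make this step work.
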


\begin{proof}
Any non-empty intersection $D$ of components of $E$ is transverse to $P$.
Therefore, if we blow up with centre $C = D \cap N$, then, on the strict
transform of $P$, no exceptional
divisor factors from the pull-back of $\ucR(\ucI)$ and the transforms of 
$\ucI'$ and $\ucM(\ucI)'$
satisfy $\ucM(\ucI') = \ucM(\ucI)'$. The result 
follows from desingularization of $\ucM(\ucI)$.
\end{proof}

\subsection{Cleaning}\label{subsec:clean}
Consider the desingularization algorithm for an embedded hypersurface 
$X \hookrightarrow Z$, and let $a \in X_{j_0}$, for some $j=j_0$ (notation of
\ref{subsec:inv}). 
The invariant
$\inv(a)$ has the form
$$
\inv(a) = (\nu_1(a), s_1(a), \ldots, \nu_q(a), s_q(a), \nu_{q+1}(a))
$$ 
(see (\ref{eq:inv})). Suppose that $p < q$.
According to the desingularization algorithm,
$\Sigma_p := (\inv_p \geq \inv_p(a))$ is (locally) the support of a marked ideal 
$\ucI = \ucI^p = (Z_{j_0},N,E,\cI,d)$ on a maximal contact subvariety $N$ of codimension $p$ in 
$Z_{j_0}$. Consider $\ucM(\ucI^p)$ and $\ucR(\ucI^p)$ as above. 
Then there is an $\inv_p$-admissible sequence of blowings-up of $Z_{j_0}$, 
\begin{equation}\label{eq:cleanblup}
Z = Z_{j_0} \stackrel{\s_{j_0 +1}}{\longleftarrow} Z_{j_0 +1} \longleftarrow \cdots
\stackrel{\s_{j_1}}{\longleftarrow} Z_{j_1}\,,
\end{equation}
such that $\cosupp \ucM(\ucI^p)_{j_1} = \emptyset$,
where $ \ucM(\ucI^p)_{j_1}$ denotes the transform of $\ucM(\ucI^p)$ in year $j_1$
(by resolution of singularities of a monomial marked ideal 
\cite[Sect.\,5, Step II.A]{BMfunct}. 
Such a blowing-up sequence will be called a \emph{cleaning}. The centres of 
blowing up involved are invariantly defined (cf. Definition \ref{def:invts}). 

As remarked above, cleaning does not necessarily mean that $\cosupp\allowbreak \ucM(\ucI_{j_1})
= \emptyset$, though $\cosupp \ucM(\ucI_{j_1})$ will be disjoint from points where
Lemma \ref{lem:clean} applies.

In general, we will use cleaning to transform $\cosupp \ucM(\ucI^p)$ to $\emptyset$,
successively for $p = q-1,\, q-2, \ldots, 1$.

\begin{example}\label{ex:clean}
Suppose that $s_{p+1}(a) = 0$. Then $E$ consists of only ``new'' exceptional
divisors for $\inv_{p+ 1/2}$ at $a$ (see \S\S\ref{subsec:bdry},\,\ref{subsec:invgeneral}), 
so that $\ucR(\ucI^p)$ has a maximal contact
hypersurface in $N$ transverse to $E$.
\end{example}

\begin{remark}\label{rem:clean}
The truncated invariant $\inv_p$ is well-defined over \eqref{eq:cleanblup}, and is
both semicontinuous and infinitesimally semicontinuous (i.e., it can only decrease
after each blowing-up). But the cleaning sequence \eqref{eq:cleanblup} is not, in
general, $\inv$-admissible. The residual multiplicity $\nu_{p+1}$ (see Definition
\ref{def:invgeneral}) can be
defined as usual over \eqref{eq:cleanblup}, so that $\inv_{p+ 1/2}$ is well-defined
and semicontinuous (though not
necessarily infinitesimally semicontinuous). 

Moreover, we can extend $\inv_{p+ 1/2}$ to a modified invariant 
on $Z_{j_1}$ by considering $j_1$ to be ``year zero'' for $\inv_{p+ 1/2}$,
and can then follow the usual desingularization algorithm and definition 
of $\inv$ starting in this year (i.e., $j_1$ will be the year of birth for 
the value $\inv_{p+ 1/2}(a)$ of $\inv_{p+ 1/2}$ over a point $a \in Z_{j_1}$).
In other words, all components of the exceptional divisor
at $a$, except those counted by $s_1(a),\dots,s_p(a)$ are counted by 
$s_{p+1}(a)$ (they are considered the ``old'' exceptional divisors for 
$\inv_{p+ 1/2}$ at $a$); then 
$\inv_{p+1} := \left(\inv_{p+ 1/2}, s_{p+1}\right)$ extends to a 
semicontinuous invariant $\inv$ on $Z_{j_1}$ by the construction in 
\S\ref{subsec:invgeneral}, and we can afterwards follow the
desingularization algorithm.
\end{remark}

\section{Simple normal crossings}\label{sec:nc}
In this section, we prove the following result (see Theorem \ref{thm:nc}).

\begin{theorem}\label{thm:snc}
Let $X$ denote a reduced variety and let $X^{\mathrm{snc}}$ denote the
locus of points of $X$ have only simple normal crossings singularities.
Then there is a morphism $\s: X' \to X$ which is a composite of finitely
many admissible blowings-up, such that
\begin{enumerate}
\item $X' = (X')^{\mathrm{snc}}$;
\item $\s$ is an isomorphism over $X^{\mathrm{snc}}$;
\item $\s$ maps $\Sing X'$ birationally onto the closure of $\Sing X^{\mathrm{snc}}$.
\end{enumerate}
\end{theorem}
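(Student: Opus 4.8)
The plan is to reduce Theorem~\ref{thm:snc} to the desingularization algorithm by running the usual $\inv$-admissible blowing-up sequence, but \emph{stopping} as soon as the only remaining singularities are simple normal crossings, and then cleaning to remove spurious monomial factors. First I would recall that, for an embedded hypersurface $X \hookrightarrow Z$ with $Z$ smooth, the value $\inv(\mathrm{nc}k) = (k,0,1,0,\dots,1,0,\infty)$ (with $k-1$ pairs $(1,0)$) is the value of $\inv$ at an snc point of order $k$. The key year-zero observation is that snc of order $k$ at $a$ is \emph{characterized} among points with $\inv(a)=\inv(\mathrm{nc}k)$ by a geometric condition: the residual part $\cR(\ucI^{1})$ of the relevant marked ideal on a maximal contact hypersurface is a unit (i.e.\ the tangent cone is already a product of distinct hyperplanes with the right exceptional structure), so that $X$ is, locally and up to units, a monomial in coordinate hyperplanes and exceptional divisors. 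For a general year $j$, the same value $\inv(a)=\inv(\mathrm{nc}k)$ together with $\cR(\ucI^{1})$ a unit means $X_j$ is defined near $a$ by $x_{i_1}^{\al_1}\cdots x_{i_k}^{\al_k}=0$ with the surplus exponents supported on exceptional divisors; a cleaning (Lemma~\ref{lem:clean}, since $s_2(a)=0$ forces a maximal contact hypersurface transverse to $E$ as in Example~\ref{ex:clean}) reduces all the $\al$'s to $1$, producing an snc point without touching points where $\cR(\ucI^{1})$ is already a unit.

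Next I would organize the induction. Let $m$ be the maximum value of $\inv$ over $X \setminus X^{\mathrm{snc}}$ (or, more precisely, over the complement of the closed stratum already known to be snc). Since $X^{\mathrm{snc}}$ is open, its complement is closed, and the top stratum of $\inv$ over that complement is a closed, $\inv$-admissible set disjoint from $X^{\mathrm{snc}}$; blowing it up strictly decreases the maximum of $\inv$ there. The subtlety is that we must not stop at $\inv = \inv(\mathrm{nc}k)$ blindly, because not every point with that value is snc. So the induction is on pairs $(\inv\text{-value},\ \text{``is }\cR(\ucI^1)\text{ a unit?''})$: when we reach a stratum $(\inv = \inv(\mathrm{nc}k))$, we split it into the snc-part (where $\cR(\ucI^1)$ is a unit, up to the monomial normalization achieved by cleaning) and its complement; the complement is closed, $\inv$-admissible for the truncated invariant, and we continue resolving it by the standard algorithm (which there reduces $\ord X$ and hence raises back into higher $\inv$ only transiently, then strictly decreases). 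We iterate downward through the finitely many possible $\inv(\mathrm{nc}k)$ values, $k = n, n-1, \dots, 1$. Functoriality and the finiteness of the sequence come for free from the corresponding properties of the desingularization algorithm and of cleaning (the centres in Lemma~\ref{lem:clean} and in \S\ref{subsec:clean} are invariantly defined).

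Finally, condition~(3): $\s$ maps $\Sing X'$ birationally onto $\overline{\Sing X^{\mathrm{snc}}}$. After (1) and (2) hold, $\Sing X'$ consists of snc singularities of order $\ge 2$ on $X'$, and $\s$ is an isomorphism over $X^{\mathrm{snc}} \supset \Sing X^{\mathrm{snc}}$; the issue is only the components of $\Sing X'$ lying over the non-snc locus of $X$ (i.e.\ over centres we blew up). As indicated after Theorem~\ref{thm:nc} in the excerpt, we handle this by a finite sequence of further admissible blowings-up: successively blow up any component of $\Sing X'$ that does not map birationally onto a component of $\overline{\Sing X^{\mathrm{snc}}}$. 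Each such blow-up has a smooth centre with only snc with respect to the exceptional divisor (the centre is a stratum of the snc structure), so it is admissible and preserves (1) and (2); it remains to check that this process terminates, which follows because each such blow-up drops a discrete complexity (e.g.\ the number of such ``bad'' components, measured with multiplicities, or the dimension of their image). Care is needed to keep the whole procedure functorial, by choosing these extra centres in an $\inv$- or structure-canonical way.

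The main obstacle I expect is the \emph{characterization} step: proving cleanly that, in an arbitrary year $j$, a point with $\inv(a) = \inv(\mathrm{nc}k)$ and with $\cR(\ucI^1)$ a unit really is snc of order $k$ after cleaning — i.e.\ that the maximal contact hypersurface together with the exceptional components gives genuinely an \emph{snc} coordinate system (coordinate hyperplanes, not just étale ones), and that cleaning does not create new non-snc points elsewhere. This is where one must use that the exceptional divisors produced by admissible blowings-up are (simple) normal crossings with the maximal contact variety, and unwind the structure of $\ucM(\ucI)$ versus $\cR(\ucI)$ carefully. Everything else is bookkeeping with the semicontinuity and infinitesimal-semicontinuity properties of $\inv_p$ already quoted in the excerpt.
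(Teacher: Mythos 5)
Your overall architecture (run the algorithm until $\inv \leq \io_k$, isolate the ``good'' $\io_k$-stratum, clean it to snc, descend) is the same as the paper's, but the central characterization you rely on is incorrect, and that error propagates through the rest of the argument.

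You propose to recognize points that will become snc of order $k$ after cleaning by the condition ``$\inv(a)=\io_k$ and $\cR(\ucI^1)$ is a unit.'' That criterion fails already in year zero: for the nc2 point $z^2+y^2=0$ (no exceptional divisor), the coefficient ideal on the maximal contact hypersurface $(z=0)$ is $\ucI^1 = ((y^2),2)$, with $\cM(\ucI^1)=(1)$, so $\cR(\ucI^1)=(y^2)$ has order $2$, not $0$ --- and indeed $\nu_2(a)=1\neq 0$, as it must for $\io_2$. More generally, at an nc$q$ point the whole recursive tower $\ucI^1,\ldots,\ucI^{q-1}$ has nontrivial residual parts; the $\io_q$ value already encodes that. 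The actual distinguishing feature, which the paper isolates in Lemma~\ref{lem:nc}, is the \emph{number of local analytic (resp.\ irreducible) components} of $X_j$ at $a$: $\inv(a)=\io_q$ together with $q$ components forces an explicit factorization $f = f_1\cdots f_q$ in which each factor is a coordinate $x_1$ plus nested monomial-in-exceptional-divisor corrections $u^{\al^k}$. It is this explicit normal form --- not a unit condition on a single residual ideal --- that makes the subsequent cleaning both well-defined and sufficient. Without it, you have no control over what ``cleaning'' actually produces, and the claim that you end up with an snc coordinate system (as opposed to merely an $\io_q$ point) has no foundation. Note also that establishing Lemma~\ref{lem:nc} requires iterated cleaning at \emph{every} codimension $p = q-1,\,q-2,\ldots,1$ (each $u^{\al^k}$ lives on its own maximal contact subvariety), not just a single cleaning on a codimension-$1$ maximal contact hypersurface.

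A second gap is in the descent step $k \to k-1$: once $\Sigma_q(X_{j_1})$ has been made snc, applying the algorithm on the complement of $\Sigma_q$ produces centres that need not be closed in $X_{j_1}$ near $\Sigma_q$ (they are unions of codimension-one faces of an snc stratum meeting along $\Sigma_q$, as in \eqref{eq:snc}). The paper deals with this by first blowing up the intersection of the closures to separate the components and only then blowing up the (now closed) components; your proposal silently assumes the restricted algorithm extends to a global sequence of admissible blowings-up. Finally, your treatment of condition~(3) by post-hoc blowing-up of ``bad'' components of $\Sing X'$ is the alternative approach the paper mentions after Theorem~\ref{thm:nc} (with the caveat about functoriality), whereas the paper's own proof obtains (3) for free from the choice of centres; that part of your plan is legitimate, but the two preceding gaps must be repaired for the argument to stand.
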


\begin{remark}\label{rem:nc}
Let $X^{\mathrm{snc}{\leq}r}$ denote the
locus of points of $X$ having only simple normal crossings singularities of orders
$\leq r$. There is a simple variant of Theorem \ref{thm:snc} where snc is replaced
by snc$\leq r$. For example, we can deduce this from Theorem \ref{thm:snc} by
blowing up singularities of order $> r$.
\end{remark}

\begin{definitions}\label{def:transverse}
Let $X \hookrightarrow Z$, where $Z$ is smooth
of dimension $n$. Let $E$ denote a finite collection of smooth hypersurfaces in $Z$
having only simple normal crossings. We say that $(X,E)$ is \emph{simple normal crossings}
(\emph{snc}) at a point $a$ if there is a regular system of parameters $(x_1,\ldots,x_n)$ at $a$ 
in which each irreducible component of $X$ is a coordinate subspace and each member
of $E$ is a coordinate hyperplane. There is an analogous notion of \emph{normal crossings}
(\emph{nc}) at $a$. We say that $X$ and $E$ are \emph{transverse} at $a$ if
they are nc and each component of $E$ is transverse to $X$ at $a$. 
We write $(X,E)^{\mathrm{snc}}$ to denote the simple normal crossings
locus of $(X,E)$.
\end{definitions}

Consider a sequence of blowings-up of $Z$,
\begin {equation}\label{eq:ambblup}
Z = Z_0 \stackrel{\s_1}{\longleftarrow} Z_1 \longleftarrow \cdots
\stackrel{\s_t}{\longleftarrow} Z_t \,.
\end{equation}
Write $X_0 := X$ and $E_0 := E$, where we 
order the members of $E_0$ in an arbitrary way. Let $X_{j+1}$ denote the
strict transform of $X_j$, $j=0,1,\ldots,$. We again say that the sequence (\ref{eq:ambblup}) 
is \emph{admissible} if, for each 
successive $j=0,1,\ldots,$ the blowing-up
$\s_{j+1}$ has smooth centre $C_j \subset X_j$ such that
$(C_j,E_j)$ is snc, where, for all $j\geq 1$, $E_{j}$ denotes the (ordered) collection of
strict transforms of the members of $E_{j-1}$, together with $\s_j^{-1}(C_{j-1})$
added as the last element. 

\begin{theorem}\label{thm:gennc}
Let $X \hookrightarrow Z$ denote an embedded reduced hypersurface, where $Z$ is
smooth, and let $E$ denote a finite collection of smooth hypersurfaces in $Z$
having only simple normal crossings. Then
there is a finite admissible sequence of blowings-up (\ref{eq:ambblup})
such that
\begin{enumerate}
\item $(X_t, E_t) = (X_t, E_t)^{\mathrm{snc}}$;
\item the morphism $\s$ given by the composite of the $\s_j$ is
an isomorphism over $(X,E)^{\mathrm{snc}}$;
\item $\s$ maps $\Sing X_t$ birationally onto the closure of $\Sing X^{\mathrm{snc}}$
\end{enumerate}
\end{theorem}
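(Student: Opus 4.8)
The plan is to use the desingularization invariant $\inv = \inv_X$ together with the cleaning lemma (Lemma~\ref{lem:clean}) to successively eliminate, in decreasing order of the invariant, every point of $X_t$ at which $(X_t, E_t)$ fails to be snc, while never touching the snc locus. The starting observation is that the snc locus of $(X,E)$ is characterized pointwise in terms of $\inv_X$ and the geometry of $E$: at a point $a$ where $(X,E)$ is snc of order $k$ (i.e. $X$ is $x_1\cdots x_k = 0$ and each member of $E$ is a coordinate hyperplane), the invariant equals $\inv(\mathrm{nc}k)$ and the components of $E$ through $a$ are transverse coordinate hyperplanes. So the ``bad'' set $\{(X_t,E_t)\text{ not snc}\}$ decomposes, after we bound the order by blowing up high-order points, into finitely many locally closed strata, each lying in a level set of $\inv$ or in a stratum where $E$ is non-transverse or non-simple.

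First I would reduce to the bounded-order situation: by Theorem~\ref{thm:snc} (or rather Remark~\ref{rem:nc}), after an admissible blowing-up sequence we may assume $X$ itself has only snc singularities of bounded order, so that the only obstruction to $(X_t,E_t)$ being snc is the position of $E$ relative to $X$ and the mutual position of the members of $E$ (non-simplicity, i.e. a component of $E$ with a self-crossing, and non-transversality of $E$ with $X$ or with other members of $E$). At each point $a$ outside $(X,E)^{\mathrm{snc}}$, the truncated invariant $\inv_p$ (for the appropriate $p$ detecting the failure) takes a value strictly above its snc-value in a neighbourhood; using infinitesimal upper semicontinuity of $\inv_p$, I would blow up the centres prescribed by the desingularization algorithm on the locus $(\inv_p \geq v)$ for the current maximal bad value $v$, which by hypothesis is disjoint from $(X,E)^{\mathrm{snc}}$, hence these blowings-up are admissible and are isomorphisms over the snc locus. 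When the maximal bad value of $\inv_p$ is reached but the monomial part of the relevant marked ideal on a maximal contact hypersurface is still nontrivial, I would invoke the cleaning lemma: Example~\ref{ex:clean} (the case $s_{p+1}=0$, which holds since all exceptional divisors through such a point are ``new'') guarantees $\cR(\ucI^p)$ has a maximal contact hypersurface transverse to $E$, so Lemma~\ref{lem:clean} cleans the monomial part by admissible blowings-up disjoint from the snc locus, after which the bad stratum is strictly smaller or its invariant has dropped. Iterating over the finitely many values of $\inv_p$ and then over $p = q-1, q-2, \ldots, 1$ as in \S\ref{subsec:clean} terminates, yielding (1) and (2). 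Condition~(3) is then obtained exactly as in the discussion after Theorem~\ref{thm:nc}: either the algorithm delivers it automatically, or one blows up each component of $\Sing X_t$ not mapping birationally onto a component of the closure of $\Sing X^{\mathrm{snc}}$.

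The main obstacle I anticipate is bookkeeping the interaction between the three distinct ways $(X_t,E_t)$ can fail to be snc --- $X$ itself bad, $E$ non-simple, $E$ non-transverse to $X$ --- and organizing them into a single monotone quantity that the cleaning/desingularization steps provably decrease without ever modifying the snc locus. In particular one must check that, on the stratum where $E$ is the only obstruction, the marked ideal $\ucI^p$ whose monomial part we clean really does have its cosupport disjoint from $(X,E)^{\mathrm{snc}}$ after each step, and that cleaning does not create a \emph{new} non-transversality between a strict transform of $E$ and the exceptional divisor outside the region we are allowed to blow up; this is where the transversality clause built into the hypothesis of Lemma~\ref{lem:clean} and the admissibility condition $(C_j, E_j)$ snc must be used carefully. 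A secondary point requiring attention is functoriality: all the centres chosen (invariant strata and cleaning centres, cf. Definition~\ref{def:invts}) are canonically defined, so the resulting $\s$ is functorial with respect to étale morphisms preserving the number of components, but one should note explicitly that bounding the order in the first reduction step preserves this.
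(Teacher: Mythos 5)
Your proposal captures the right tools (the desingularization invariant, the cleaning lemma via Example~\ref{ex:clean}, iteration in decreasing order of the truncated invariant), but the opening reduction is circular and the decomposition of the problem goes in the wrong direction. You propose to first invoke Theorem~\ref{thm:snc} (or Remark~\ref{rem:nc}) to assume that $X$ itself is already snc of bounded order, and then to handle the position of $E$ as a separate, lighter problem. But in the paper's logical order, Theorem~\ref{thm:snc} is \emph{derived from} Theorem~\ref{thm:gennc}, not the other way round: the strong desingularization algorithm reduces the general variety case to the embedded-hypersurface case, and then Theorem~\ref{thm:snc} is exactly the case $E=\emptyset$ of Theorem~\ref{thm:gennc}. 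Invoking it here is therefore assuming what you must prove. The correct reduction is the opposite one: replace $X$ by the single hypersurface $X\cup E$ and take $E=\emptyset$ from the start. That collapses all three of the failure modes you list (bad $X$, non-simple $E$, non-transversal $E$) into a single problem, and it is precisely why you need not organize three separate monotone quantities --- the obstacle you flagged dissolves once the reduction is done correctly.

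With $E=\emptyset$, the actual engine of the proof is a normal form that you do not mention and that cannot be bypassed: Lemma~\ref{lem:nc}, which says that if $\inv(a)=\io_q$ \emph{and} $X_j$ has $q$ local components at $a$, then after an \'etale coordinate change the $q$ factors of the local equation take the explicit triangular form $f_k = x_1 + u^{\al^1}(\,\cdots\,)$ with the exceptional monomials $u^{\al^k}$ visible. Having $\inv=\io_q$ alone does \emph{not} give snc (e.g.\ $x_1^k+\cdots+x_k^k=0$ has the same invariant); it is the conjunction with the component count, and the resulting normal form, that makes the monomial ideals $\ucM(\ucI^k)$ on the nested maximal-contact subspaces globally well-defined and cleanable. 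You should first blow up those components of $(\inv=\io_q)$ that are not generically snc (to make the hypotheses of the lemma hold generically), and only then clean $\al^{q-1},\al^{q-2},\ldots$ to zero. A final point your proposal leaves unaddressed (and correctly worries about) is that, after $(X_{j_1},E_{j_1})$ is snc on $\Sigma_q$, the desingularization centres chosen on the complement of $\Sigma_q$ need not be closed in $X_{j_1}$: near a point of $\Sigma_q$ a centre can look like the union of loci $(u_{l_1}=\cdots=u_{l_p}=x_1=\cdots=\widehat{x_i}=\cdots=x_q=0,\ x_i\neq 0)$. The paper resolves this by first blowing up the intersection of their closures to separate the components, then blowing up the resulting disjoint union; without this separation step the admissibility of the centres over $\Sigma_q$ fails, and the inductive descent in the number of components does not go through.
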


Moreover, the theorem is functorial with respect to \'etale or smooth morphisms
preserving the number of irreducible components of $X$ and $E$ at every point (cf.
Remark \ref{rem:funct}).

The sequence of blowings-up (\ref{eq:ambblup}) will be independent of an
ordering of $E_0$. If $X \hookrightarrow Z$ is an embedded variety, then the strong desingularization algorithm of \cite{BMinv, BMfunct} (cf. Remark \ref{rem:min}(2)) proceeds 
by first blowing up non-hypersurface points and points where (the transform
of) $E$ is not tranverse to a local minimal embedding variety for (that of) $X$,
to reduce to the case that $X \hookrightarrow Z$ is an embedded hypersurface.
So we can reduce Theorem \ref{thm:snc} to Theorem \ref{thm:gennc}. On the other
hand, we can reduce Theorem \ref{thm:gennc} to the case that $E = \emptyset$,
simply by replacing $X$ by $X \cup E$.

Let $X \subset Z$ be as in Theorem \ref{thm:gennc} (with $E = \emptyset$). Consider the
desingularization invariant $\inv = \inv_X$ and the sequence of $\inv$-admissible
blowings-up (\ref{eq:ambblup}) given by the desingularization algorithm of \cite{BMinv, BMfunct}. Let $a \in X_j$. We will write $a_i$ to denote the image of $a$ in $X_i$, for
any $i \leq j$.

Recall that, if $X$ is nc$q$ at a point $a$, then $\inv(a) = \io_q$, where
$$
\io_q := (q,0,1,0,\ldots,1,0,\infty)
$$
with $q-1$ pairs $(1,0)$.

\begin{lemma}\label{lem:nc}
Let $X \subset Z$ denote an embedded hypersurface, where $Z$ is smooth.
Consider the
desingularization invariant $\inv = \inv_X$ and the sequence of $\inv$-admissible
blowings-up (\ref{eq:ambblup}) given by the desingularization algorithm, as above. 
\begin{enumerate}
\item Let $a \in X = X_0$. Then $\inv(a) = \io_q$ and $X$ has $q$ local
analytic (respectively, irreducible) components at $a$ if and only if $X$ is nc$q$ 
(respectively, snc$q$) at $a$.
\item Let $a \in X_j$, for given $j$. If $\inv(a) = \io_q$ and $X_j$ has $q$ local
analytic (respectively, irreducible) components at $a$, then we can choose
local analytic, i.e., \'etale (respectively, regular) coordinates at $a$, 
$$
(x,u) = (x_1,\ldots,x_q, u_1,\ldots, u_{n-q}),
$$
in which the ideal of $X_j$ is generated by a product $f = f_1\cdots f_q$ such that
\begin{align*}
f_1 &= x_1,\\
f_2 &= x_1 + u^{\al^1}x_2,\\
f_3 &= x_1 + u^{\al^1}\left( x_2\cdot\xi_{32} + u^{\al^2}x_3\right),\\
f_4 &= x_1 + u^{\al^1}\left( x_2\cdot\xi_{42} + u^{\al^2}\left(x_3\cdot\xi_{43}
+ u^{\al^3}x_3\right)\right),\\
&\cdots,
\end{align*}
where each
$u^{\al^k} = u_1^{\al_1^k}\cdots u_{n-q}^{\al_{n-q}^k}$, with $(u_l = 0) \in E_j$ if 
$\al_l^k > 0$.
\end{enumerate}
\end{lemma}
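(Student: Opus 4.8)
The plan is to prove both parts simultaneously by induction, with part (1) being essentially the year-zero case of part (2) supplemented by the converse implication. I would organize the argument around the structure of the desingularization invariant and the way maximal contact subvarieties are built up. First I would treat part (1). If $X$ is nc$q$ (resp. snc$q$) at $a$, then in suitable \'etale (resp. regular) coordinates $X$ is $x_1\cdots x_q = 0$, and a direct computation of $\inv$ at such a point gives $\io_q$: the order of the ideal drops by one each time we pass to a maximal contact hypersurface, the residual multiplicities are all $1$, and there are no exceptional divisors in year zero so all the $s_k$ vanish; the tail is $\infty$ because after the $q$-th step the residual ideal is a unit. Conversely, suppose $\inv(a) = \io_q$ and $X$ has exactly $q$ local analytic components at $a$. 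The condition $\nu_1(a) = q$ forces $\ord_a X = q$, and $\nu_2(a) = \cdots = \nu_q(a) = 1$ together with $\nu_{q+1}(a) = \infty$ forces, via the inductive definition of maximal contact, that along a flag of maximal contact hypersurfaces the defining ideal has order $q, q-1, \ldots, 1$ and then becomes a unit; combined with the hypothesis of exactly $q$ analytic branches this is exactly the statement that the degree-$q$ part of $f$ at $a$ splits into $q$ distinct linear-in-the-first-coordinates factors after completing — i.e., $X$ is nc$q$. For the snc statement one notes that the number of irreducible (as opposed to analytic) components being $q$ is precisely what prevents any Galois-type identification of branches, so the coordinates can be chosen regular.

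The heart of the work is part (2), and here I would argue by induction on $q$. The base case $q=1$ is trivial: $\inv(a) = \io_1 = (1,0,\infty)$ means $\ord_a X_j = 1$, so $X_j$ is smooth at $a$ and we may take $f_1 = x_1$. For the inductive step, suppose the result holds for $q-1$. Since $\inv(a) = \io_q$ with $\nu_1(a) = q = \ord_a X_j$, the degree-$q$ form of a defining equation of $X_j$ at $a$ is (after completion/\'etale base change) a product of $q$ linear forms; because $X_j$ has exactly $q$ analytic components, these can be taken to be the differentials of the $f_i$'s, and a maximal contact hypersurface $P$ for $\inv_1$ at $a$ can be chosen to be one of these components, say $(f_1 = 0)$, which we rename $(x_1 = 0)$ — so $f_1 = x_1$. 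Now the coefficient ideal construction on $P$ produces a marked ideal whose cosupport is $(\inv_1 \geq \io_q\text{-truncation})$, and the remaining data $\nu_2, s_2, \ldots, \nu_{q+1}$ of $\inv(a)$ is exactly the invariant of $X_j \setminus (x_1=0)$ — more precisely of the product $f_2 \cdots f_q$ restricted to $P$ — which by hypothesis has value $\io_{q-1}$ and $q-1$ analytic components. Applying the inductive hypothesis on $P$ (with its induced exceptional divisors $E_j \cap P$, and noting these are exactly the $(u_l = 0)$) gives coordinates $(x_2, \ldots, x_q, u)$ on $P$ in which $f_2\cdots f_q|_P$ has the asserted form with shifted indices. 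The final step is to lift this back off $P$: each $f_i$ for $i \geq 2$ is recovered from its restriction to $(x_1 = 0)$ by adding a multiple of $x_1$; the precise claim is that one can arrange $f_i = x_1 + (\text{the inductive expression})$, i.e. that the "correction off $P$" is exactly $x_1$ with coefficient $1$, which follows from rescaling each $f_i$ by a unit (legitimate since we only care about the ideal generated by the product) and from the fact that maximal contact for $\inv_1$ at $a$ forces every branch to be tangent to order $\geq 1$ — no, to pass through $a$ with the same linear part up to scaling — so after rescaling the $x_1$-coefficient is $1$.

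The main obstacle I anticipate is the bookkeeping in this last lifting/normalization step: showing that the nested form with the specific unit factors $\xi_{ij}$ and monomials $u^{\al^k}$ is genuinely achievable and not merely an approximation — in other words, that the "error terms" one would naively expect (higher-order corrections in $x_1$, or failure of the monomials to be pure exceptional monomials) can be absorbed. The key points that make this work are: (i) $\inv(a) = \io_q$ is a very rigid condition — each residual multiplicity being exactly $1$ means at each stage of the maximal-contact tower the residual ideal is, after factoring out an exceptional monomial, generated by a \emph{single} coordinate, leaving no room for genuinely nonmonomial residual error; (ii) the monomials $u^{\al^k}$ are forced to be supported on $E_j$ precisely because any non-exceptional factor would contribute to a residual multiplicity $> 1$ somewhere down the tower, contradicting $\io_q$; and (iii) the inductive structure of $\inv$ over maximal contact subvarieties is exactly mirrored by the inductive/nested structure of the $f_i$, so the induction is "tight." I would also need to check compatibility with the hypothesis that the coordinates on $P$ can be extended to coordinates on $Z_j$ near $a$ with $(x_1 = 0) = P$, which is automatic since $P$ is smooth of codimension one through $a$. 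A secondary technical point is keeping track of analytic versus regular (\'etale) coordinates throughout — the argument is uniform, but one must note at each step that passing to $P$ and back does not change the étale-versus-Zariski status of the local components, which is true because $P$ is defined over the ground field once we are in the snc case and the component count is preserved.
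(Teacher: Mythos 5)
Your high-level plan — descend through the maximal contact flag using coefficient ideals and recover the product structure — is the right one, and it is the idea behind the paper's proof. However, your inductive framing has a genuine gap in the step where you apply the inductive hypothesis on $P = (f_1 = 0)$.

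You assert that ``the remaining data $\nu_2,\dots,\nu_{q+1}$ of $\inv(a)$ is exactly the invariant of \dots the product $f_2\cdots f_q$ restricted to $P$,'' which you then claim ``has value $\io_{q-1}$ and $q-1$ analytic components,'' so that the inductive hypothesis applies. This is not correct once $j>0$. The marked ideal living on $P$ is the \emph{coefficient ideal} of $X_j$, and after factoring it carries an exceptional monomial $u^{\al^1}$; the entry $\nu_2(a)=1$ is the \emph{residual} multiplicity, i.e., the order of the coefficient ideal \emph{after} dividing out $u^{\al^1}$, not the order of $g|_P$ where $g = f_2\cdots f_q$. Concretely, if $\al^1 \ne 0$ then $\ord_a\bigl(g|_P\bigr) > q-1$, so $g|_P$ is \emph{not} a hypersurface with $\inv = \io_{q-1}$ and your inductive hypothesis (phrased for strict transforms of a hypersurface under $\inv$-admissible blowings-up) does not apply to it as stated. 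There is also a type mismatch: the coefficient ideal on $P$ is a marked ideal on a maximal contact subvariety, not a strict transform in the sense the lemma is stated for, so even if the numerics were right the IH could not be invoked without reformulating it. You partially recognize this in your list of ``key points,'' but the mechanism for absorbing $u^{\al^k}$ into the nested form is precisely the part that needs an argument, not an observation.

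The paper avoids these problems by not inducting on the lemma at all. It uses Weierstrass preparation with a Tschirnhaus substitution to put a generator of $\cI_{X_j}$ into the form $z^q + \sum_{i\ge 2} b_i(y)z^{q-i}$ with $(z=0)$ maximal contact, factors this over the branches as $\prod_j (z-a_j(y))$ with $\sum a_j = 0$, identifies the coefficient ideal on $(z=0)$ with $((a_j),1)$ (Example~\ref{ex:split}), and then iterates: $\nu_2 = 1$ forces $(a_j) = u^{\al^1}\cdot(\text{order-1 ideal})$, normalize $a_1 = u^{\al^1}y_1$, write $a_j = u^{\al^1}(y_1\eta_{j1}+c_j)$, and repeat on $(z=y_1=0)$ with the ideal $(c_j)$, extracting $u^{\al^2}$, and so on down the flag of maximal contact subspaces. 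The final change of coordinates $x_1 := z - a_1$, etc., gives the nested form. This direct iteration handles the exceptional monomials explicitly at each level and never needs to regard the coefficient ideal as a hypersurface, which is exactly where your induction breaks. Incidentally, your choice of the branch $(f_1=0)$ as maximal contact is legitimate (see the example $f = z\cdot g$ in \S\ref{subsec:max}), but it does not by itself repair the gap, because the issue is the monomial factor and the type of the object on $P$, not the choice of maximal contact hypersurface.
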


\begin{proof} 
See \S\ref{subsec:inv} for the ``if'' direction of (1). The ``only if'' direction of (1)
is a special case of (2). We will prove (2).

By the Weierstrass preparation theorem, the ideal of $X_j$ at $a$ has a generator 
of the form
\begin{equation}\label{eq:wform}
f(y,z) = z^q + \sum_{i=2}^q b_i(y)z^{q-i}
\end{equation}
in local \'etale coordinates $(y,z) = (y_1,\ldots,y_{n-1}, z)$ at $a=0$, where
$\ord_a b_i \geq i$, for each $i$, and $(z=0)$ is a maximal contact hypersurface.
Since $X_j$ has $q$ components at $a$, we can factor \eqref{eq:wform} as
\begin{equation*}
z^q + \sum_{i=2}^q b_i(y)z^{q-i} = \prod_{j=1}^q \left(z-a_j(y)\right),
\end{equation*}
where $\sum a_j = 0$.

Then the coefficient ideal corresponding to the maximal contact hypersurface
$(z=0)$ is equivalent to $((a_j), 1)$. (See Example \ref{ex:split}.) 

Since $\inv(a) = \io_q$, the ideal $(a_j)$ has order $1$ at $a$, after division by a
monomial $u^{\al^1}$ in the exceptional divisor. After a change of coordinates,
we can assume that $a_1 = u^{\al^1}y_1$, where $(z=y_1=0)$ is a second
maximal contact subspace (i.e., maximal contact subspace of codimension $2$), and
that each $a_j$, $j\geq 2$, is of the form
\begin{equation*}
a_j = u^{\al^1}\left( y_1\cdot\eta_{j1} + c_j\right).
\end{equation*}

Again, the ideal $(c_j)$ on $(z=y_1=0)$ has order $1$ at $a$ after division by
an exceptional monomial $u^{\al^2}$, and so on. So we can write $f$ in the form
$f = f_1\cdots f_q$, where the first $q-1$ factors are of the form
\begin{align*}
f_1 &= z + u^{\al^1}y_1,\\
f_2 &= z + u^{\al^1}\left( y_1\cdot\eta_{21} + u^{\al^2}y_2\right),\\
f_3 &= z + u^{\al^1}\left( y_1\cdot\eta_{31} + u^{\al^2}\left(y_2\eta_{32} 
+ u^{\al^3}y_3\right)\right),\\
&\cdots,
\end{align*}
(recall that $\sum a_j = 0$) and the result follows, by a further coordinate change.
\end{proof}

\begin{proof}[Proof of Theorem \ref{thm:gennc}]
We can assume that $E=\emptyset$.
Given $p \in \IN$, let $\Sigma_p(X)$ denote the locus of points lying in at least $p$ 
irreducible components of $X$.
Let $q$ denote the largest value of $\ord_aX$ at snc points of $X$. We blow up with 
$\inv$-admissible centres following the desingularization algorithm as long as the 
maximum value of $\inv$ is $> \io_q$, stopping when
the maximum value $= \io_q$, say in year $j_0$. Set $I_q(X,j_0) := (\inv = \io_q)
\subset X_{j_0}$. Then $I_q(X,j_0) \neq \emptyset$ since it includes the snc points of 
$X$ (includes in the sense that all previous blowings up are isomorphisms over such 
points of $X$).

Using the desingularization algorithm, we can blow up any component of $I_q(X,j_0)$
which is not generically snc (to decrease $\inv$). Therefore, we can assume that every
component of $I_q(X,j_0)$ is generically snc.

Let $a \in I_q(X,j_0)$. Choose coordinates at $a$ satisfying Lemma \ref{lem:nc}.
The locus
$$
\left(x_1 = \cdots = x_{q-1} = 0\right) \bigcap 
\left(\ord\,u^{\al^{q-1}}\geq 1\right)
$$
is the cosupport of a monomial marked ideal of order $1$ on a maximal contact
subvariety $N = (x_1 = \cdots = x_{q-1} = 0)$ of codimension $q-1$. According to the Cleaning Lemma \ref{lem:clean}, we can reduce
$\al^{q-1}$ to $0$ by finitely many globally-defined $\inv_{q-1}$-admissible blowings-up.

We can repeat the preceding
argument using the monomial marked ideal $\left((u^{\al^{q-2}}),1\right)$ on the
subspace $x_q = x_1 = \cdots x_{q-2} = 0$ to reduce $\al^{q-2}$ 
to $0$, etc., eventually to reduce all $\al^k$ to $0$.

\begin{remark}\label{rem:functclean}
For simplicity, we have begun in a way that ignores the problem of functoriality. In fact, 
if $n := \dim Z$, then, for each $p = n,n-1,\ldots,q$, we should follow the desingularization
algorithm (starting as if in ``year zero'') until $\inv \leq \io_p$, even if 
$I_p(X,j) = \emptyset$ ($p > q$),  blow up any component of $I_p(X,j)$ which is not
generically snc, and then perform cleaning as above. Globally, 
$(x_1 = \cdots = x_{k} = 0) \cap (\ord\,u^{\al^k} \geq 1)$, $k \leq p-1$, 
is given by the cosupport of
an invariantly defined monomial marked ideal $\ucM(\ucI^{k})$ on the locus
$(\inv_{k} \geq (\io_p)_k)$ for the truncated invariant, and the cleaning
procedure of \S\ref{subsec:clean} applies. 
\end{remark}

Suppose that we are now in year $j_1$. The result of our cleaning above is that 
$(X_{j_1}, E_{j_1})$ is snc at all points of $\Sigma_q(X_{j_1})$, and therefore in a
neighbourhood of $\Sigma_q(X_{j_1})$.

We now apply the desingularization algorithm to $(X_{j_1}, E_{j_1})$ restricted to the
complement of $\Sigma_q(X_{j_1})$ (where we regard $j_1$ as ``year zero'') to blow up with
smooth centres over the complement of $\Sigma_q(X_{j_1})$ until the maximum value of $\inv$
is $\leq \io_{q-1}$. 

However, the centres of the blowings-up involved
will not necessarily be closed in $X_{j_1}$ and its strict transforms (since, in the process,
we will introduce nonzero $s$-terms in $\inv$).

For example, the total transform of $X$ at a point of $\Sigma_q(X_{j_1})$ is of the form
$(u^\al x_1 \cdots x_q = 0)$, where $u^\al = u_1^{\al_1} \cdots u_{n-q}^{\al_{n-q}}$ is a 
monomial in exceptional divisors. The  centre
of the blowing up of $X_{j_1}$ will be given near such a point of $\Sigma_q(X_{j_1})$ by
\begin{equation}\label{eq:snc}
\bigcup_{i=1}^q \left(u_{l_1} = \cdots = u_{l_p} = x_1 = \cdots = \widehat{x_i} 
= \cdots = x_q = 0,\,\, x_i \neq 0\right),
\end{equation}
for some $l_1, \ldots, l_p$ (where $\widehat{x_i}$ means that $x_i$ is deleted from the expression).

We can simply modify the algorithm by first blowing up with centre given by 
$$
\left(u_{l_1} = \cdots = u_{l_p} = x_1 = \cdots = x_q = 0\right)
$$
(the intersection of the closures of the components in (\ref{eq:snc})) to separate the
components, and by then blowing up the union of these (closed) components. The
two blowings-up are admissible and include no (points lying over) snc points of $X$.

In general, given a union of subvarieties
$$
\left(u_{l_{i1}} = \cdots = u_{l_{ip_i}} = x_1 = \cdots = \widehat{x_i} 
= \cdots = x_q = 0\right),
$$
for certain $i=1,\ldots,q$ (where each $p_i>0$), 
we can blow up finitely many times with centres of increasing
dimension in $\Sigma_q(X_j)$, $j = j_1,\ldots$, to separate these 
varieties (before blowing them up, for example).

We thus modify each of the blowings-up of $(X_{j_1}, E_{j_1})$ above; we get a finite
sequence of blowings-up with closed admissible centres over the complement of the snc locus
of $X$, after which $(X_j, E_j)$ is snc on $T_q(X_j)$, where $T_q(X_j)$ denotes the inverse
image of $\Sigma_q(X_{j_1})$ in $X_j$, and the maximum value of 
$\inv$ on the complement of $T_q(X_j)$ is $\leq \io_{q-1}$, for some $j = j_1' \geq j_1$).

We then blow up any component of $I_{q-1}(X,j_1')$ which is not generically snc, and
apply the Cleaning Lemma as above (over the complement of $T_q(X_{j_1'})$),
to blow up further until we have 
$(X_{j_2}, E_{j_2})$ (for some year $j_2$) snc at every point of $\Sigma_{q-1}(X_{j_2})$.
(The centres of the blowings-up involved will be separated
from the successive $T_q(X_j)$ because $(X_j, E_j)$ is already snc in a neighbourhood
of the latter.)

In general, suppose that, for some year $j_k$, $(X_{j_k}, E_{j_k})$ is snc on 
$\Sigma_{q-k+1}(X_{j_k})$. We apply the desingularization
algorithm over the complement of $\Sigma_{q-k+1}(X_{j_k})$ as above, 
until the maximum value
of $\inv$ is $\leq \io_{q-k}$. 
The closure of each centre of blowing up can be separated into
a disjoint union of smooth subvarieties as above. Afterwards, we again blow up the
components of $(\inv = \io_{q-k})$ that are not generically snc,
and then apply the Cleaning Lemma.
So we get a finite sequence of blowings-up
with smooth admissible centres, after which $(X_{j_{k+1}}, E_{j_{k+1}})$ has snc at every
point of $\Sigma_{q-k}(X_{j_{k+1}})$. 

Eventually, we get $(X_j, E_j)$ snc on $\Sigma_1(X_j) = X_j$.
We thus get the theorem with conditions (1) and (2), and condition (3) is
clear from the choices of blowings-up (see also Theorem \ref{thm:nc} ff.).
\end{proof}

\section{Pinch points}\label{sec:pp}
Our main goal in this section is to prove Theorem \ref{thm:high3}. In comparison
with Theorem \ref{thm:min} in the case $n=3$, the problem here is to eliminate
new singularities that intervene as limits of pinch points (\S\ref{subsec:pp}). 
By contrast, in \S\ref{subsec:dpp}, we will show
that new singularities which occur as limits of degenerate pinch points cannot
necessarily be eliminated.

Before turning to Theorem \ref{thm:high3}, we indicate how to get the normal forms
listed in Remark \ref{rem:totaltransf} for the total transform in Theorems
\ref{thm:min} and \ref{thm:minstrong} in the case $n=3$.

\subsection{Minimal singularities in $3$ variables}\label{subsec:pp}

Let $X \subset Z$ denote an embedded hypersurface where $Z$ is smooth and
of pure dimension $3$. According to the proofs of Theorems \ref{thm:min} and
\ref{thm:minstrong} in the case $n=3$ (see \S\ref{subsec:min}),
we have a sequence of blowings-up
$$
Z = Z_0 \stackrel{\s_1}{\longleftarrow} Z_1 \longleftarrow \cdots
\stackrel{\s_j}{\longleftarrow} Z_j \,
$$
after which every point of $X_j$ has only nc3, nc2 and pp singularities.
Moreover, we have the normal forms listed in Remark \ref{rem:totaltransf} at every
singular point of $X_j$ (see Remark \ref{rem:totalsings}).

\begin{remark}\label{rem:totalsingsn=3}
\emph{How to get the normal forms of Remark \ref{rem:totaltransf} at every point.}
Write $W := Z_j$, $Y :=X_j$, and let $E$ denote (the support of) the exceptional divisor $E_j$.
Set $\Sigma := \Sing Y$. 
We apply the desingularization algorithm to $(Y,E)$ in $W$, over the open subset
$V = W\setminus \Sigma$. This is now ``year zero'' for the desingularization algorithm,
so that $\inv$ will have a meaning different than before. For example, consider a pp
where $Y = (z^2 + xy^2 = 0)$ and $(x=0)$ is the exceptional divisor; then at a nearby
point $z = x = 0$, $y \neq 0$, we have $\inv = (1,1,2,0,\infty)$. There is a neighbourhood
of $\Sigma$ in which $Y \cap V$ has only smooth points, but $(Y,E)$ has the following
possible forms, characterized by the value of the invariant shown (in year zero).
\medskip

\begin{tabular}{l l l}
$\,\,Y:\, y=0\,\,$ & $\,\,E:\, \emptyset\,\,$ & $\,\,\inv = (1, 0, \infty)\,\,$\\
$\,\,Y:\, y=0\,\,$ & $\,\,E:\, x=0\,\,$ & $\,\,\inv = (1,1,1, 0, \infty)\,\,$\\
$\,\,Y:\, y=0\,\,$ & $\,\,E:\, y+x^2 = 0\,\,$  & $\,\,\inv = (1,1,2, 0, \infty)\,\,$
\end{tabular}
\medskip

We blow up with centre prescribed by the desinglarization algorithm for $(Y,E)$
restricted to $V$, until the maximum value of $\inv$ is $(1,1,2, 0, \infty)$. The centres
involved are separated from $\Sigma$ and its inverse images. We can also blow up
any closed component of $(\inv = (1,1,2, 0, \infty)$. 

Now, at a point where $\inv = (1,1,2, 0, \infty)$, the strict transform of $Y \cup E$ is 
given by an equation
$$
y(y+u^\al x^2) = 0,
$$
where $(u=0)$ is the exceptional divisor. We can blow up using the Cleaning Lemma
to reduce to $\al = 0$. (The centres of the blowings-up involved in cleaning are separated
from the inverse images of $\Sigma$.) We thus reduce to the case that the (strict transforms
of) $Y$, $E$ are given by equations of the form $y=0$, $y+x^2 = 0$ (respectively) at every
point of the transform of $(\inv = (1,1,2, 0, \infty))$.

Let $\Sigma'$ denote the union of the latter and the inverse image of $\Sigma$. We repeat
the argument above to blow up (over the complement of $\Sigma'$) until the maximum
value of $\inv$ is $(1,1,1, 0, \infty)$, and then use the Cleaning Lemma to reduce locally
to $Y = (y=0)$, $E = (x=0)$.

A further sequence of blowings-up over the complement of the points already considered,
until the maximum value of $\inv$ becomes $(1,0,\infty)$, completes the argument.
\end{remark}
  
\subsection{Pinch points in higher dimension}\label{subsec:highpp}
Consider a hypersurface $X \hookrightarrow Z$, $Z$ smooth, with a pinch point
singularity at a point $a$; in local coordinates, $z^2 +xy^2 = 0$. Then
\begin{equation}\label{eq:invpp}
\inv(a) = \left(2,0,3/2,0,1,0,\infty\right).
\end{equation}
But (\ref{eq:invpp}) does not guarantee that $a$ is a pinch point; for example,
$z^2 + y^3 +x^3 = 0$ has the same value of $\inv$ but an isolated singularity at $0$.

\begin{lemma}\label{lem:pp}
Let $X \hookrightarrow Z$ denote a hypersurface, $Z$ smooth, and let $a \in X$. Then
\begin{enumerate}
\item $a$ is a pinch point pp if and only if 
$$
\inv(a) = (2,0,3/2,0,1,0,\infty)
$$
and the singular subset of $X$, $\Sing X$ has codimension $2$ in $Z$ at $a$;
\item $a$ is a degenerate pinch point dpp if and only if 
$$
\inv(a) = (2,0,3/2,0,2,0,\infty)
$$  
and $\Sing X$ has codimension $2$ at $a$.
\end{enumerate}
\end{lemma}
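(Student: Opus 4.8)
The plan is to characterize pp and dpp by combining the value of $\inv$ with the geometry of $\Sing X$, using the structure of maximal contact subvarieties. First I would handle the ``only if'' directions, which are essentially computations: for $X = (z^2 + xy^2 = 0)$ one verifies directly that $\inv(a) = (2,0,3/2,0,1,0,\infty)$ (this is already recorded in \S\ref{subsec:highpp}, equation \eqref{eq:invpp}), and that $\Sing X = (z = x = 0)$ has codimension $2$. For $X = (z^2 + (y+2x^2)(y-x^2)^2 = 0)$, after the coordinate change $y \mapsto y + x^2$ one gets $z^2 + (y + 3x^2)y^2 = z^2 + y^3 + 3x^2 y^2$, so $(z=0)$ is a maximal contact hypersurface and the coefficient ideal on it is equivalent to $(y^3 + 3x^2 y^2, \, \cdot\,)$, i.e. $(y^2(y+3x^2))$; its order is $2$, the residual part has order $1$ after... no, here the ideal is $(y^2)$ times a unit-ish factor $(y+3x^2)$ which has order $1$, so the residual multiplicity is... one must check that $\nu_2 = 3/2$ (the $z^2$ contributes as in the pp case) and $\nu_3 = 2$ coming from the $y^2$ factor on the second maximal contact subvariety $(z = y = 0)$ — this reproduces the computation underlying the entry dpp in Definition \ref{def:min}. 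And $\Sing X$ for the dpp is $(z = y = 0)$, again codimension $2$.

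For the ``if'' direction, which is the substantive content, I would argue as follows. Suppose $\inv(a) = (2,0,3/2,0,\nu_3,0,\infty)$ with $\nu_3 \in \{1,2\}$ and $\codim_a \Sing X = 2$. Since $\nu_1 = 2$, Weierstrass preparation and the maximal-contact construction give local \'etale coordinates in which the ideal of $X$ is generated by $f = z^2 + b(x,y)$ with $\ord_a b \ge 3$ and $(z=0)$ a maximal contact hypersurface; the coefficient ideal on $(z=0)$ is equivalent to $(b, 2)$ (compare Example \ref{ex:split}). The condition $\nu_2 = 3/2$ says exactly that after dividing off the monomial (exceptional) part of this ideal — which is empty in year zero, so $b$ itself — the residual order drops to $1$ along a codimension-$2$ maximal contact subvariety; concretely, $b$ has order exactly $3$ and its ``residual'' behaviour is that of a function whose third-order part factors appropriately. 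The key structural fact I would extract is: $\nu_2 = 3/2$ forces $b$, after a coordinate change in $(x,y)$, to have the form $y^2 \cdot c(x,y)$ up to higher-order corrections, with $(z = y = 0)$ the second maximal contact subvariety, and then $\nu_3$ records the order along $(z=y=0)$ of the residual part of the coefficient ideal $(c, \cdot)$ there. Thus $\nu_3 = 1$ means $c$ is a unit times something vanishing to order $1$, giving $b \sim y^2 \cdot (\text{unit}\cdot x + \dots)$, hence after cleaning up, $f \sim z^2 + x y^2$, a pinch point. And $\nu_3 = 2$ gives $b \sim y^2 \cdot q$ with $\ord q = 2$, so $f \sim z^2 + y^2 q(x,y)$ with a quadratic $q$; here is where $\codim \Sing X = 2$ enters decisively.

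The role of the codimension hypothesis is to rule out the degenerate isolated-singularity cases: $z^2 + y^3 + x^3 = 0$ has the pp value of $\inv$ but $\Sing X = \{0\}$ has codimension $3$, and similarly a generic $z^2 + y^2 q(x,y)$ with $q$ a nondegenerate quadratic form not divisible by $y$ could have a smaller singular locus. With $\codim_a\Sing X = 2$, the singular locus is a smooth curve through $a$ (after the preliminary reductions it will be $(z = y = 0)$, the second maximal contact subvariety), and along it $X$ looks like a family of plane curve singularities $z^2 + y^2 q$; the codimension-$2$ condition forces $q$ to have, generically along this curve, exactly the $A_1$-type behaviour. I would then normalize: in the $\nu_3 = 1$ case, solve for the maximal contact subvariety of codimension $3$ and absorb units to reach exactly $z^2 + xy^2$; in the $\nu_3 = 2$ case, the quadratic $q(x,y)$, not divisible by $y$ (else $\nu_3$ would differ) and with the singular locus forcing its discriminant structure, can be brought to the form $(y+2x^2)(y-x^2)^2$-type after the inverse of the coordinate change mentioned after Definition \ref{def:minstrong} — more precisely, I would cite or re-derive the normal form $z^2 + (y+2x^2)(y-x^2)^2 = 0$ by classifying $z^2 + y^2 q$ with $q$ quadratic and $\Sing$ of codimension $2$, the two ``branches'' of $(y^2 q = 0)$ forcing a tacnode-type contact.

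The main obstacle I expect is the normalization step in the $\nu_3 = 2$ (dpp) case: extracting from ``$\nu_2 = 3/2$, $\nu_3 = 2$, $\codim \Sing X = 2$'' the precise analytic normal form $z^2 + (y+2x^2)(y-x^2)^2 = 0$ up to \'etale equivalence, rather than merely ``$z^2 + y^2 \cdot(\text{quadratic})$''. This requires understanding exactly which quadratics $q$ are permitted (those for which $y^2 q = 0$ has the right singular locus and the invariant is not pushed higher) and then exhibiting an \'etale coordinate change to the stated model; the computation of $\nu_3$ as the residual order on the second maximal contact subvariety, keeping careful track of how $\sqrt{-1}$ and other units are or are not squares, is where one must be most careful. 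The pp case is comparatively routine, following the lines already sketched in \S\ref{subsec:highpp} together with Lemma \ref{lem:nc}-style maximal-contact bookkeeping.
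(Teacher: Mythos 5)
The overall scaffolding (Weierstrass preparation to reach $z^2+b=0$, maximal contact at $z=0$, then analyzing $\nu_3$ on a codimension-2 maximal contact subvariety) matches the paper's proof, and the ``only if'' direction is indeed routine. But the ``if'' direction has a genuine gap in the way you apportion the hypotheses. You assert that $\nu_2 = 3/2$ \emph{by itself} forces $b$ into the form $y^2\cdot c(x,y)$ ``up to higher-order corrections,'' and you bring in $\codim_a\Sing X = 2$ only for the $\nu_3 = 2$ case. This is backwards: $b = y^3 + x^3$ has $\nu_2 = 3/2$ and $\nu_3 = 1$ but three distinct linear factors, so no such factorization exists, and indeed $z^2+y^3+x^3=0$ is the very counterexample the lemma is designed to exclude. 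What the paper actually does is first put $b$ into the depressed-cubic form $y^3 + B(x)y + C(x)$ (Weierstrass plus a Tschirnhaus shift, using only $\nu_2 = 3/2$), and then observe that $\codim_a\Sing X = 2$ forces the cubic and its $y$-derivative to share a common factor, yielding the exact factorization $b = (y-A(x))^2(y+2A(x))$. The codimension hypothesis is the engine behind the factorization in \emph{both} the pp and dpp cases, and once the factorization holds the codimension condition is automatic, not an extra constraint to be imposed afterward.

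The dpp normalization is also off. You claim $\nu_3 = 2$ gives $b\sim y^2 q$ with $q$ a quadratic ``not divisible by $y$,'' and then try to bring $y^2 q$ to $(y+2x^2)(y-x^2)^2$. But $(y+2x^2)(y-x^2)^2 = \tilde y^2(\tilde y + 3x^2)$ with $\tilde y = y - x^2$: the cofactor $\tilde y + 3x^2$ has order $1$, not $2$; it is its \emph{restriction} to $(\tilde y=0)$ that has order $2$, and that is what $\nu_3$ measures. There is no quadratic $q$ to classify, and the ``discriminant structure'' you invoke is not doing what you say it does. Finally, you never use the terminal $\infty$ in $\inv(a)=(2,0,3/2,0,2,0,\infty)$. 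With $b=(y-A)^2(y+2A)$ one has $\nu_3 = \ord_a A$, so $\nu_3=2$ gives $\ord_a A=2$; but e.g.\ $A = x_1 x_2$ (in four variables) has order $2$ and yet gives $\nu_4\neq\infty$. The $\infty$ is precisely what forces $A$ to be the \emph{square} of an order-$1$ function, which is what makes $f$ reducible to $z^2+(y+2x^2)(y-x^2)^2=0$. Your remark about ``units being or not being squares'' gestures at this but does not supply the argument.
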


\begin{proof}
Suppose that $X$ has order $2$ at a point $a$. Then, in suitable \'etale
local coordinates $(x,z) = (x_1,\ldots,x_{n-1},z)$ at $a$, $X$ is given by an equation
\begin{equation}\label{eq:ord2}
z^2 + b(x) = 0.
\end{equation}
If $\inv_2(a) = (2,0,3/2,0)$, then we can choose new coordinates $(x,y,z)
= (x_1,\ldots,x_{n-2},y,z)$ in which (\ref{eq:ord2}) becomes
\begin{equation}\label{eq:ord2+}
z^2 + y^3 + B(x)y + C(x) = 0.
\end{equation}
Then $\Sing X$ lies in 
\begin{align*}
z &= 0,\\
y^3 + B(x)y + C(x) &= 0,\\
3y^2 + B(x) &=0.
\end{align*}
If $\Sing X$ has codimension $2$ at $a$, then the last 2 equations have a common
factor, so (\ref{eq:ord2+}) can be rewritten in the form
\begin{equation}\label{eq:ord2f}
z^2 + (y - A(x))^2 (y + 2A(x)) = 0.
\end{equation}

Clearly, $a$ is a pinch point if and only if $\ord_a A =1$, and (1) follows.
Likewise $\inv(a) = (2,0,3/2,0,2,0,\infty)$ if and only if $A$ is the square of
a function of order $1$, so (2) follows.
\end{proof}

\begin{proof}[Proof of Theorem \ref{thm:high3}]
We can reduce to the case that $X \hookrightarrow Z$ is an embedded hypersurface,
$Z$ smooth. We then divide the argument into three parts:
\medskip

\noindent
(I) We can blow up following the desingularization algorithm as long as
the maximum value of $\inv$ is $> \inv(\mathrm{pp}) := (2,0,3/2,0,1,0,\infty)$. 
(The blowings-up involved do
not modify pp, nc2 or smooth points of $X$.)

Suppose that the maximum value of $\inv$ is $\inv(\mathrm{pp})$ (in some year of
the resolution history). Then the locus $(\inv = \inv(\mathrm{pp})$ is a smooth subset
of $X$ of codimension $3$ in $Z$. Each component of this set either contains no pp or
is generically pp (according as $\Sing X$ has codimension $> 2$ or $=2$ at the generic
point). We can blow up to get rid of all components with no pp.

Then at any point $a$ with $\inv(a) = \inv(\mathrm{pp})$, the strict transform of $X$
is defined by an equation
$$
z^2 + u^\al \left(y + u^\be x \right)^2 \left(y - 2u^\be x\right) = 0,
$$
in suitable \'etale local coordinates $(u,x,y,z) = (u_1,\ldots,u_{n-3},x,y,z)$ for $Z$,
where $u^\al = u_1^{\al_1}\cdots u_{n-3}^{\al_{n-3}}$ and $\al_i > 0$ only if $(u_i=0)$ is
a component of the exceptional divisor (and likewise for $u^\be$).

We use the cleaning lemma first to reduce to the case $\be = 0$
($\al$ will increase in the process): 
\begin{equation}\label{eq:ppclean}
\left(z=y=0, \, \ord\,  u^\be \geq 1\right) \subset \left(\inv_2 = (2,0,3/2,0)\right)
\end{equation}
is the cosupport of an invariantly defined monomial marked ideal with associated
multiplicity $1$ on a maximal contact subvariety of codimension $2$; any component
of this set extends to an $\inv_2$-admissible centre of blowing up. The blowings-up
involved in applying the cleaning lemma have centres given locally by components
of (\ref{eq:ppclean}) and its transforms.

Secondly, we use the cleaning lemma to reduce to the case $|\al| \leq 1$, where
$|\al| := \al_1 + \cdots \al_{n-3}$, using 
$$
\left(z=0, \, \ord\,  u^\al \geq 2\right) \subset \left(\inv_1 = (2,0)\right).
$$

If $\al = 0$, we have a pinch point. If $|\al| = 1$,then we have a singularity of the
form
$$
z^2 + u_1(y+x)^2(y-2x) = 0,
$$
where $u_1$ is an exceptional divisor. In this case, we blow up with centre given locally by
$$
\left(z = y = x = u_1 =0\right) \subset \left(\inv = (2,0,3/2,0,1,0,\infty)\right)
$$ 
to get
$$
z^2 + u_1^2(y+x)^2(y-2x) = 0.
$$
We now repeat the second cleaning step above to get a pinch point.
\medskip

\noindent
(II) Let us say we are now in year $j_0$. At any point of the pp locus, we can
choose \'etale coordinates in which $X$ and the support of the exceptional divisor
are given as
\begin{equation} \label{eq:pploc}
z^2 + xy^2 = 0 \text{ and } \sum_{i=0}^s (u_i = 0)  
\end{equation}
(respectively), for some $s \geq 0$. At nearby nc2 singularities (when $x \neq 0$
above), we can find \'etale coordinates in which $X$ and the support of the exceptional 
divisor are given as
\begin{equation} \label{eq:nc2loc}
z^2 + y^2 = 0 \text{ and } \sum_{i=0}^s (u_i = 0),
\end{equation}
for some $s \geq 0$.

We now apply the desingularization algorithm outside the pp locus (where we consider
$j_0$ as ``year zero'') until the maximum value of $\inv$ is 
$\inv(\text{nc}2) = (2,0,1,0,\infty)$. 
Each component of every centre of blowing up involved is either separated from (the inverse image of) the pp locus above, or, near a point as in \eqref{eq:pploc}, of the form 
$z=y=0,\, u_j = 0$, for certain $j$. We can handle this as in the proof of Theorem
\ref{thm:gennc}, blowing up to separate such components at the pp locus before we
blow them up.  

Cleaning as in the proof of Theorem \ref{thm:min}, case $n=3$ (see \S\ref{subsec:min}), 
produces nc2, or pp at special points of the stratum $(\inv = (2,0,1,0,\infty))$.

\medskip

\noindent
(III) We can now use the desingularization algorithm
to resolve any singularities remaining outside $\{$nc2, pp$\}$ (i.e., to reduce to
$\ord = 1$), by admissible blowings-up
This completes the proof. (Condition (3) of the theorem has also been satisfied.)
\end{proof}

\begin{remark}\label{rem:high3total}
The proof above provides normal forms analogous to those listed in Remark 
\ref{rem:totaltransf} for the total transform
at every singular point (i.e., nc2 or pp) of the final strict transform.
In order to get the appropriate normal forms also at smooth points of the latter,
we need two more steps (see also \S\ref{subsec:pp}):
\medskip

\noindent
(IV) We apply the desingularization algorithm to the pair given by the final strict 
transform and exceptional divisor, outside $\{$nc2, pp$\}$, until the maximum value of 
$\inv$ is $(1,1,2,0,\infty)$. Note that different components of a centre of the blowings-up involved may meet at the pp locus, but we can separate them as in the proof of 
Theorem \ref{thm:gennc}. We also blow up any closed components of 
$(\inv = (1,1,2,0,\infty))$. We can then clean the latter locus. (See Remark
\ref{rem:totalsingsn=3}.)
\medskip

\noindent
(V) We can now apply Theorem \ref{thm:gennc} outside the closed set given
by $\{$nc2, pp$\}$ together with the locus cleaned up in (IV).
\end{remark}

\begin{remark}\label{rem:onfunctclean}
We have not explicitly considered functoriality in the proof of Theorem \ref{thm:high3},
nor in the proofs of Theorems \ref{thm:min}, \ref{thm:minstrong} (when $n=3$)
in \S\ref{subsec:inv.1}. To ensure functoriality, we have to be a little more careful,
as indicated in Remark \ref{rem:functclean}. For example, in part (I) of the proof of
Theorem \ref{thm:high3} above, we should really blow up until 
$\inv \leq \inv(\text{pp})$, eliminate the components
of $(\inv = \inv(\text{pp}))$ which contain no pp, and then perform the
cleaning blowings-up whether or not the latter is non-empty.
\end{remark}

\subsection{Limits of degenerate pinch points}\label{subsec:dpp}
\begin{remark}\label{rem:limdpp}
Suppose we use the desingularization
algorithm as in the proof of Theorem \ref{thm:high3} above, to blow up until the
maximum value of $\inv$ is $(2,0,3/2,0,2,0,\infty)$. At a point $a$ of a component
of $(\inv = (2,0,3/2,0,2,0,\infty))$ which is generically dpp, $X$ is defined by an
equation of the form
$$
z^2 + u^\al \left(y + u^\be x^2 \right)^2 \left(y - 2u^\be x^2\right) = 0,
$$
where $u^\al$ and $u^\be$ are again monomials in components of the exceptional
divisor.

Using the cleaning lemma as above, we can blow up avoiding dpp to
reduce to the case that $\be = 0$ and $|\al| = 0$ or $1$. If $\al = 0$, then we
have a dpp. 

Suppose that $|\al| = 1$. In this case, the singularity cannot be 
eliminated in the way we handled a similar situation in the proof above.
By a change of variables, we can rewrite
the equation as
$$
z^2 + u y (y + x^2)^2 = 0
$$
(where $u$ here denotes a single variable). 
Blowing up $(z = y = u = 0)$ results in $z^2 + y(u y + x^2) = 0$ --- the 
\emph{exceptional singularity} in Theorem \ref{thm:min}. (See \cite[Sect.\,1]{BLM}.)
\end{remark}

\section{Appendix. Crash course on the desingularization invariant}\label{sec:app}
Our purpose in this section is to provide a working knowledge of the
desingularization invariant, sufficient to understand the way it
is used in our main results without reading all the details of the
desingularization algorithm and the invariant (for example, in \cite{BMinv}, 
\cite{BMfunct}).

Resolution of singularities of a variety $X$ can be described by an \emph{iterative}
algorithm. Desingularization can be realized, according to Hironaka \cite{Hann}
by a sequence of blowings-up. The desingularization invariant $\inv = \inv_X$ 
can be defined iteratively
over a sequence of suitable blowings-up. Resolution of singularities can be realized by choosing, as each successive centre of blowings up, the maximum locus of $\inv$;
this is the approach of \cite{BMinv}.

Every iterative algorithm can be described, in an equivalent way, by a recursive
algorithm. The desingularization algorithm of \cite{BMinv} is presented recursively
in \cite{BMfunct} (as well as in \cite{Ko, W}, for the case of a hypersurface $X$).
The recursive presentation has a certain advantage from the point of view of
formal clarity, but hides the explicit calculations involved in computing the invariant
and its maximal loci, as needed for this article. The brief presentation below
mixes the iterative and recursive aspects. 

We restrict our attention to the case of a hypersurface. Throughout this appendix,
$X \subset Z$ denotes an \emph{embedded hypersurface} defined over
a field $\uk$ of characteristic zero (i.e., $Z$ is a smooth variety and
$X$ is a subvariety of pure codimension $1$, usually reduced).

\subsection{Resolution of singularities}\label{subsec:res}

\begin{theorem}\label{thm:res}
 There is a sequence of blowings-up
\begin{equation}\label{eq:blup}
Z = Z_0 \stackrel{\s_1}{\longleftarrow} Z_1 \longleftarrow \cdots
\stackrel{\s_{t}}{\longleftarrow} Z_{t}\,,
\end{equation}
where each $\s_{j+1}$ has smooth centre $C_j$, such that if
$X_0 = X$, $E_0 = E := 0$ and, for each $j = 0,1,\ldots$,
\begin{enumerate}
\item[(i)] $X_{j+1}$ denotes the strict transform of $X_j$,
\item[(ii)] $E_{j+1}$ denotes the exceptional
divisor of $\s_1\circ\cdots\circ\s_{j+1}$,
\end{enumerate}
then, for each $j$,
\begin{enumerate}
\item $C_j$ and $E_j$ have only simple normal crossings,
\item $C_j \subset \Sing X_j$ or $X_j$ is smooth and
$C_j \subset X_j \cap E_j$,
\item $C_j$ is the maximum locus of an invariant $\inv_X(\cdot)$ (see Remark
\ref{rem:resprecision});
\item $X_t$ is smooth and $X_t,\, E_t$ are snc;
\end{enumerate}
\end{theorem}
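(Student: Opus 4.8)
The plan is to construct the invariant $\inv = \inv_X$ of the statement explicitly, by descending induction on the ambient dimension using maximal contact, and then to show that blowing up its maximum locus strictly decreases $\inv$ in a well-ordered value set; termination then yields a smooth strict transform which is snc with the exceptional divisor.

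First I would define $\inv$ iteratively. In ``year zero'', at a point $a\in X=X_0$, set $\nu_1(a):=\ord_a X$ and $s_1(a):=0$. If $\nu_1(a)\le 1$ the point is smooth and $\inv$ stops; otherwise choose a maximal contact hypersurface $H$ through $a$ (a smooth hypersurface of maximal order of tangency, extracted from a suitable derivative of a local generator), form the coefficient marked ideal $\ucI=(H,\dots,\cI,d)$ of $X$ on $H$ whose cosupport equals $(\inv_1\ge\inv_1(a))$, factor $\cI=\cM(\ucI)\cdot\cR(\ucI)$ into monomial and residual parts (\S\ref{subsec:monres}), put $\nu_2(a):=(\ord_a\cR(\ucI))/d$, let $s_2(a)$ count the ``old'' exceptional components through $a$, and recurse on the marked ideal $\ucR(\ucI)$ inside $H$, which lives in one fewer dimension. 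This produces a tuple
$$
\inv(a)=(\nu_1(a),s_1(a),\ldots,\nu_t(a),s_t(a),\nu_{t+1}(a)),
$$
and after any $\inv$-admissible blowing-up one extends $\inv$ to the strict transform as in \S\ref{subsec:invgeneral}; the delicate input here — which I would cite from \cite{BMinv,Hann} rather than reprove — is that this is independent of the choices of maximal contact, so $\inv$ is well defined. I would then record that $\inv$ takes values in a set that is well-ordered by the lexicographic order (the $\nu_k$ are rationals with denominators bounded in terms of $\ord X$, the $s_k$ nonnegative integers), that it is upper semicontinuous, and — the point that gives conclusions (1) and (3) — that its maximum locus is smooth, because locally it is the cosupport of a marked ideal whose order equals its associated multiplicity on a \emph{smooth} maximal contact subvariety; moreover this locus has snc with $E_j$, since the $s_k$-terms encode the position of $X_j$ relative to the ``old'' components of $E_j$ while the ``new'' components meet the locus transversally by construction.

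Next, at each step take $C_j:=$ the maximum locus of $\inv$ on $X_j$, a smooth admissible centre contained in $\Sing X_j$ (or in $X_j\cap E_j$ once $X_j$ is smooth but not yet snc with $E_j$), and blow up. The heart of the proof is the claim $\max_{X_{j+1}}\inv<\max_{X_j}\inv$. It follows from infinitesimal semicontinuity of each truncation $\inv_k$ (it can only drop after an $\inv_k$-admissible blowing-up), together with the decisive observation that $\inv$ cannot stabilize indefinitely: while $\nu_1=\ord X$ stays constant the residual orders $\nu_k$ are nonincreasing, and once all of them have stabilized one is in the \emph{monomial case}, where the residual part is a unit and $\cI$ coincides on the relevant maximal contact subvariety with the monomial marked ideal $\ucM(\ucI)$ — combinatorial resolution of $\ucM(\ucI)$, exactly the mechanism behind the Cleaning Lemma~\ref{lem:clean}, then empties the cosupport in finitely many steps, forcing $\inv$ to strictly drop. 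Well-foundedness of the value order now gives termination: one reaches $\ord_a X_t\le 1$ everywhere, i.e.\ $X_t$ smooth, establishing (2) and the first half of (4); running the algorithm until $\inv\equiv(1,0,\infty)$ (the $s_k$-part of the stopped invariant measuring precisely the failure of snc with $E_t$) then makes $X_t,\,E_t$ snc, which is the rest of (4).

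The main obstacle, as always in this subject, is the strict decrease of the invariant — specifically the monomial case and the bookkeeping of the exceptional divisor (the $s_k$-terms) needed to keep semicontinuity and infinitesimal semicontinuity genuinely valid under blowing up; the coherent choice of maximal contact making $\inv$ well defined with a smooth maximal locus is the other technical pillar. For a complete argument I would invoke \cite{BMinv,BMfunct}.
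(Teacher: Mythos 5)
The paper does not prove Theorem \ref{thm:res} itself; the Appendix is an expository ``crash course'' that recalls the construction of $\inv$ and the algorithm while deferring the actual proof to \cite{BMinv}, \cite{BMfunct}, \cite{Hann}. Your proposal is a sketch in precisely the same spirit: iterative definition of $\inv$ via maximal contact and coefficient ideals with the monomial/residual factorization, well-ordering and (infinitesimal) semicontinuity, termination via the monomial case, with the hard inputs (independence of maximal contact, gluing of local centres) cited from the same sources. So the approach matches the paper's treatment.

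One imprecision is worth flagging. Your argument that the maximum locus of $\inv$ is smooth (``locally it is the cosupport of a marked ideal \ldots on a smooth maximal contact subvariety'') and the claim $\max_{X_{j+1}}\inv < \max_{X_j}\inv$ both hold only when $\inv(a)$ ends in $\infty$. In the monomial case ($\nu_{q+1}(a)=0$) the maximum locus is the intersection of a maximal contact subvariety with a normal-crossings divisor; it may have several smooth components whose union is not smooth, one blows up a single component at a time, and $\max\inv$ strictly drops only after finitely many combinatorial blowings-up, not after each one. You do invoke the Cleaning-Lemma mechanism for termination, so the overall logic is not broken, but the two statements as phrased overreach --- this is exactly the caveat recorded in Remark \ref{rem:resprecision}, to which condition~(3) of the theorem explicitly points.
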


Note that (1) implies $E_{j+1}$ is snc. The support of each 
exceptional divisor $E_{j+1}$ has 
ordered components $H^j_1,\ldots,H^{j+1}_{j+1}$
(not necessarily irreducible), where $H^{j+1}_{j+1} := \s_{j+1}^{-1}(C_j)$
and each $H^{j+1}_i$, $i<j+1$ denotes the strict transform in $Z_{j+1}$ of
$H^i_i$. We will denote each $H^j_i$ by $H_i$, for short. 
The ``invariant''
$\inv_X$ is invariant with respect to \'etale (or smooth) morphisms of $Z$ and
ground-field extensions.

\subsection{The desingularization invariant}\label{subsec:inv}
The \emph{desingularization invariant} $\inv = \inv_X$ can defined inductively over any suitable sequence of blowings-up \eqref{eq:blup}.
More precisely, for each $j=0,1,\dots$, we define $\inv$ on $Z_{j+1}$ assuming
that it is defined on $Z_0, \ldots, Z_j$ and each blowing-up $\s_{i+1}$,
$i \leq j$ is \emph{$\inv$-admissible} in the sense that
\begin{enumerate}
\item the centre $C_i \subset Z_i$ of $\s_{i+1}$  is smooth and simple normal 
crossings with
$E_i$, where $E_i$ is the exceptional divisor of $\s_1\circ\cdots\circ\s_i$;
\item $\inv$ is constant on every component of $C_i$.
\end{enumerate}

Write $X_0 := X$. For each $j \geq 0$, let $X_{j+1} \subset Z_{j+1}$
denote the strict transform of $X_j$ by $\s_{j+1}$. If $a \in Z_j$, 
then $\inv_X(a)$ depends on the previous blowings-up.
(A functorial algorithm for resolution of singularities
necessarily has some historical memory; cf. \cite[Ex.\,1.9]{BMmsri},
\cite[\S3.6]{Ko}.) In fact, if $a \in Z_j$, then $\inv_X(a)$ depends only on   
$X_j$ and certain subblocks of the set of components of $E_j$, which
we describe below.

Let $a \in Z_j$. Then $\inv(a)$ has the form
\begin{equation}\label{eq:inv}
\inv(a) = (\nu_1(a), s_1(a), \ldots, \nu_q(a), s_q(a), \nu_{q+1}(a))\,,
\end{equation}
where $\nu_k(a)$ is a positive rational number if $k\leq q$, each $s_k(a)$
is a nonnegative integer, and $\nu_{q+1}(a)$ is either $0$ (the order of an
ideal generated by a unit) or $\infty$ (the order of the zero ideal).
The successive pairs $(\nu_k(a),s_k(a))$ are defined inductively over
\emph{maximal contact} subvarieties of increasing codimension. $\inv(a) = (0)$
if and only if $a \in Z_j\setminus X_j$.

We order finite sequences of the form (\ref{eq:inv}) lexicographically.
Then $\inv(\cdot)$ is upper-semicontinuous on each $Z_j$, and 
\emph{infinitesimally upper-semicontinuous} in the sense that, if $a \in Z_j$,
then $\inv(\cdot) \leq \inv(a)$ on $\s_{j+1}^{-1}(a)$. 

\begin{remark}\label{rem:resprecision}
In Theorem \ref{thm:res}, consider $a \in X_j$ in the maximum locus of $\inv$.
If $\inv(a) = (\ldots,\infty)$, then $C_j =$ maximum locus of $\inv$ is smooth and
$\inv < \inv(a)$ on $\s_{j+1}^{-1}(a)$. If $\inv(a) = (\ldots,0)$, then the maximum locus
of $\inv$ in fact may have
several smooth components --- it is given by the intersection of a
smooth subspace of $Z_j$ with a normal crossings divisor --- and $\inv$ decreases
after finitely many ``monomial'' or ``combinatorial'' blowings-up (centre given
by any component of the maximum locus). See Remark \ref{rem:invs0}.
\end{remark}

We also introduce truncations of $\inv$. Let $\inv_{k+1}(a)$ denote
the truncation of $\inv(a)$ after $s_{k+1}(a)$ (i.e., after the $(k+1)$st 
pair), and let $\inv_{k+1/2}(a)$ denote the truncation of $\inv(a)$ after
$\nu_{k+1}(a)$. ($\inv_{k+1/2}(a) := \inv(a) =: \inv_{k+1}(a)$ if $k \geq q$
in (\ref{eq:inv}). 

Given $a \in Z_j$, let $a_i$ denote the image of $a$ in $Z_i$, $i\leq j$.
(We will speak of \emph{year} $i$ in the history of blowings-up). 
The \emph{year of birth} of 
$\inv_{k+1/2}(a)$ (or $\inv_{k+1}(a)$) denotes the smallest $i$ such that
$\inv_{k+1/2}(a) = \inv_{k+1/2}(a_i)$ (respectively, $\inv_{k+1}(a) =
\inv_{k+1}(a_i)$). 

Let $a \in Z_j$. Let $E(a)$ denote the set of components of $E_j$ which
pass through $a$. The entries $s_k(a)$ of $\inv(a)$ are the sizes of certain 
subblocks of $E(a)$: Let $i$ denote the birth-year of 
$\inv_{1/2}(a) = \nu_1(a)$, and let $E^1(a)$ denote the collection of
elements of $E(a)$ that are strict transforms of components of $E_i$
(i.e., strict transforms of elements of $E(a_i)$). Set $s_1(a) :=
\#E^1(a)$. We define $s_{k+1}(a)$, in general, by induction on $k$: Let $i$ denote
the year of birth of $\inv_{k+1/2}(a)$ and let $E^{k+1}(a)$ denote the 
set of elements of $E(a) \setminus \left(E^1(a) \cup \cdots \cup E^k(a)\right)$
that are strict transforms of components of $E_i$. Set $s_{k+1}(a) :=
\#E^{k+1}(a)$.

Clearly, all $s_k(a) = 0$ in year zero (i.e., if $a \in Z$). We will be
interested in $\inv(a)$ often in the case that all $s_k(a) = 0$, in some
given year $j$.

Given $a \in Z_j$, $\nu_1(a)$ means $\ord_a X_j$.
The entries $\nu_k(a)$ of $\inv(a)$ in general are \emph{residual orders} 
that we define in general in \S\ref{subsec:invgeneral}. We will first 
consider the invariant in year zero,
$$
\inv(a) = (\nu_1(a), 0, \ldots, \nu_q(a), 0, \nu_{q+1}(a))\,,
$$
where the $\nu_k(a)$ are simpler (\S\ref{subsec:inv0}).
In year zero, $\nu_{q+1}(a) = \infty$.
(In general, $\nu_{q+1}(a) = 0$ only if 
$E(a) \setminus \left(E^1(a) \cup \cdots \cup E^q(a)\right) \neq \emptyset$.)

\subsection{Maximal contact}\label{subsec:max}
Let $a \in X$ and let $d := \nu_1(a) = \ord_a X$. Let $f$ denote a local generator 
of $\cI_X$ in a neighhourhood $U$ of $a$ in $Z$
such that $\nu_1(x) \leq d$, $x \in U$. We will write $\cosupp (f,d)$ 
or $\cosupp (\cI_X, d)$ for the locus
of points of order $d$ of $f$ in $U$. Say that a (local) blowing-up
$\s: Z' \to U \subset Z$ with smooth centre $C \subset U$ is \emph{$\ord$-admissible}
if $C \subset \cosupp (f,d)$. 

Let $X'$ denote the strict transform of
$X$ by an ord-admissible blowing-up $\s: Z' \to U \subset Z$ with centre $C$.
At a point of $Z'$, $\cI_{X'}$ is generated by
$f' := y_{\exc}^{-d} f\circ \s$, where $y_{\exc}$ denotes a local generator
of the ideal of the exceptional divisor $\s^{-1}(C)$. We will use the
same notation $X' \subset Z'$ for the strict transform of $X$ by a sequence
of $\ord$-admissible local blowings-up.

A \emph{maximal contact hypersurface} for $\cI_X$ at $a$ denotes a
hypersurface $N = V(z)$, where $z$ is a regular function on a neighbourhood
$U$ as above, such that $\ord_a z =1$, with the property that
$\cosupp(\cI_{X'}, d) \subset N'$ after any sequence of $\ord$-admissible local
blowings-up. ($N' = V(z')$ denotes the strict transform of $N$. See the formal
definition \ref{def:max} below.)

\begin{example}
Suppose that $\cI_X$ has a local generator $f$ which can be written as a Weierstass
polynomial in local coordinates $(x_1,\ldots,x_n)$ at $a=0$,
\begin{equation}\label{eq:wpol}
f(x) = x_n^d + c_{d-1}(\tx) x_n^{d-1} + \cdots c_0(\tx)\,,
\end{equation}
where the coefficients $c_i$ are regular 
(or analytic) functions in $\tx = (x_1,\ldots,x_{n-1})$
such that $\ord_a c_i \geq d-i$. After a coordinate change 
$x_n' = x_n - c_{d-1}(\tx)/d$, we can assume that $c_{d-1} = 0$. We claim that
$z := x_n$ defines a maximal contact hypersurface.

Clearly, $\ord_x f = d$ if and only if $z=0$ and $\ord_{\tx} c_i \geq d-i$,
for all $i$. (Note that $c_i$ can be identified with the restriction to
$N = V(z)$ of the partial derivative $\p_z^if := \p^if/\p z^i$.)

Let $\s: Z' \to U \subset Z$ be an $\ord$-admissible local blowing-up 
with smooth centre $C$. We
can assume that $C = \{x_r = \cdots = x_n =0\}$, after a transformation
of the $\tx$-coordinates. Then $Z'$ can be covered by coordinate charts
$U_{x_j}$, $j=r,\ldots,n$, where the ``$x_j$-coordinate chart'' $U_{x_j}$
has coordinates $(y_1,\ldots,y_n)$ given by
$y_k = x_k/x_j$ if $k = r,\ldots,n$, $k\neq j$, and $y_k = x_k$ otherwise.
The strict transform $X'$ lies in the union of the charts $U_{x_j}$,
$j = r,\ldots,n-1$.

Consider, for example, the chart $U_{x_r}$ with coordinates
$$
y_j = x_j, \,\, j \leq r, \quad y_j = x_j/x_r, \,\, j > r.
$$
In this chart, the strict transform is given by $f'(y) = 0$, where
\begin{align*}
f'(y) &= y_r^{-d}f\circ\s\\
      &= y_n^d + c'_{d-2}(\ty)y_n^{d-2} + \cdots c'_0(\ty)\,,
\end{align*}
and each 
\begin{equation}\label{eq:st}
c'_i(\ty) = y_r^{-(d-i)}c_i\circ \ts\,.
\end{equation}
The strict transform $f'$ has the same form as our original function $f$;
in particular, $\ord_y f' = d$ if and only if $y_n =0$ and 
$\ord_{\ty} c'_i \geq d-i$, for all $i$. Moreover $y_n = z' := y_r^{-1}z\circ\s$;
i.e., $N' = V(y_n)$ is the strict transform of $N$. Our claim follows.
\end{example}

\begin{example}
Suppose that $\cI_X$ has a local generator $f$ of the form $f = z\cdot g$,
where $\ord_a z = 1$. Clearly, in a neighbourhood of
$a$, $\ord_x f = d$ if and only if $z=0$ and $\ord_x g = d-1$. Consider the
transforms $f' := y_\exc^{-d} f\circ\s$, $z' := y_\exc^{-1} z\circ\s$ and
$g' := y_\exc^{-(d-1)} g\circ\s$ by an $\ord$-admissible local blowing-up $\s$.
Then $f' = z'\cdot g'$, and $\ord_y f' = d$ if and only if $z' =0$ and 
$\ord_y g'= d-1$. It follows that $N = V(z)$ is a maximal contact hypersurface.
\end{example}

In general, if $N = V(z)$ is a maximal contact hypersurface for $\cI_X$ at $a$,
then, in a neighbourhood of $a$, $\ord_x f = d$ if and only if $x \in N$ and
$\ord_x \p_z^if|_N \geq d-i$, $i=0,\dots,d-1$ (likewise for the transforms
by an admissible blowing-up). Moreover, the transformation
formula $f' := y_{\exc}^{-d} f\circ \s$ for an $\ord$-admissible blowing-up $\s$
implies the following transformation
rules for the partial derivates $\p_z^if$:
$$
\p_{z'}^i f' = y_{\exc}^{-(d-i)} \p_z^if\circ\s\,,\quad i = 0,\dots,d-1\,.
$$
It therefore makes sense to regard the data given by $(f,d)$ on $Z$
as ``equivalent'' to those given on $N$ by 
$(c_i,d-i) := \left(\p_z^if|_N,d-i\right)$,
$i = 0,\dots,d-1$, with respect to the corresponding transformation rules. 
Since $\dim N = \dim Z - 1$, this idea of equivalence is a basis for induction
on dimension.

Note, however, that we might have $\ord_a c_i > d-i$,
for all $i$. We define
\begin{equation}\label{eq:nu2}
\nu_2(a) := \min_{0\leq i\leq d-1} \frac{\ord_a c_i}{d-i}\,.
\end{equation}
To continue an inductive definition of the invariant, we need to work
not only with data of the form $(f,d)$, where $\ord_a f = d$, but also,
more generally, with a ``marked ideal'' $\ucI = (\cI, d)$, where
$\ord_a \cI \geq d$.

We will return to the invariant in year zero below, but it is convenient
to first formalize the ideas of \emph{marked ideal} and \emph{equivalence}
in a general setting.

\subsection{Marked ideals}\label{subsec:markedideal}
\begin{definitions}\label{def:markedideal}
A \emph{marked ideal} $\ucI$ is a quintuple $\ucI = (Z,N,E,\cI,d)$, 
where: 
\begin{enumerate}
\item
$Z \supset N$ are smooth varieties,
\item
$E = \sum_{i=1}^s H_i$ is a simple normal crossings divisor on $Z$ which is
tranverse to $N$ and \emph{ordered} (the $H_i$ are smooth hypersurfaces
in $Z$, not necessarily irreducible, with ordered index set as indicated),
\item
$\cI \subset \cO_N$ is an ideal,
\item
$d \in \IN$.
\end{enumerate}
The \emph{cosupport} of $\ucI$,
$$
\cosupp \ucI := \{x \in N:\, \ord_x\cI \geq d\}\,.
$$
We say that $\ucI$ is of \emph{maximal
order} if $d = \max\{\ord_x \cI: x \in \cosupp \ucI\}$. The \emph{dimension}
$\dim \ucI$ denotes $\dim N$.
\smallskip

A blowing-up $\s: Z'= \Bl_C Z \to Z$ (with smooth centre $C$) is
\emph{$\ucI$-admissible} (or simply \emph{admissible}) if
$C \subset \cosupp \ucI$, and
$C$, $E$ have only normal crossings.
The \emph{(controlled) transform} of $\ucI$ by an admissible blowing-up
$\s: Z' \to Z$ is the marked ideal $\ucI' = (Z',N',E',\cI',d'=d)$,
where
\begin{enumerate}
\item
$N'$ is the strict transform of $N$ by $\s$,
\item
$E' = \sum_{i=1}^{s+1} H_i'$ (where $H_i'$ denotes the strict transform
of $H_i$, for each $i=1,\ldots,s$, and $H'_{s+1} := \s^{-1}(C)$ --- the
exceptional divisor of $\s$, introduced as the last member of $E'$),
\item
$\cI' := \cI_{\s^{-1}(C)}^{-d}\cdot \s^*(\cI)$ (where $\cI_{\s^{-1}(C)}
\subset \cO_{N'}$ denotes the ideal of $\s^{-1}(C)$).
\end{enumerate}
In this definition, note that $\s^*(\cI)$ 
is divisible by $\cI_{\s^{-1}(C)}^d$ and $E'$ is a normal crossings 
divisor transverse to $N'$, because $\s$ is admissible. We likewise define
the transform by a sequence of admissible blowings-up.
\smallskip

We say that two marked ideals $\ucI$ and $\ucJ$ (with the same ambient
variety $Z$ and the same normal crossings divisor $E$) are \emph{equivalent}
if they have the same sequences of \emph{test transformations} (i.e., every
test sequence for one is a test sequence for the other). \emph{Test
transformations} are transformations of a marked ideal by morphisms of three
possible kinds: admissible blowings-up, projections from products with an
affine line, and \emph{exceptional blowings-up} \cite[Defns.\,2.5]{BMfunct}. 
In particular, if $\ucI$ 
and $\ucJ$ are equivalent, then they have the same cosupport and their
transforms by any sequence of admissible blowings-up have the same cosupport.
The remaining two types of test transformations are used to prove functoriality
properties of the desingularization invariant and algorithm. We refer the
reader to \cite[\S2]{BMfunct} for definitions; 
we do not need these notions explicitly here.
\end{definitions}

In particular, equivalent marked ideals have the same resolution sequences.

\begin{definition}\label{def:markedres}
A \emph{resolution of singularities} of a marked ideal $\ucI = (Z,N,E,\cI,d)$
is a sequence of admissible blowings-up \eqref{eq:blup} after which $\cosupp \ucI'
= \emptyset$. 
\end{definition}

\begin{example}\label{ex:markedres}
Given a hypersurface $X \hookrightarrow Z$ as above, we introduce the marked
ideal
\begin{equation}\label{eq:markedres}
\ucI_X := (Z,Z,\emptyset, \cI_X,1).
\end{equation}
Then a resolution of singularities of $\ucI_X$ (which is functorial with respect
to \'etale morphisms) provides a resolution of singularities of $X$,
\emph{before the last blowing up} for $\ucI_X$. Consider
the resolution sequence for $\ucI_X$. Each centre of blowing-up is smooth and
snc with respect to the exceptional divisor. The last blowing-up leads to
empty cosupport, and the centre of the last blowing-up includes all smooth
points of $X$. It follows that strict 
transform of $X$ coincides with the centre at this step.  So we have resolved the singularities of $X$.
\end{example}

To interpret the data $\{(c_i, d-i)\}$ on $N$ of \S\ref{subsec:max} as a marked
ideal, it it convenient to define sums of marked ideals. In general, we
will shorten the notation $(Z,N,E,\cI,d)$ to $(E,\cI,d)$ or $(\cI,d)$ when
the remaining entries are unambiguous.

\begin{definition}\label{def:sum}
Consider marked ideals
$\ucI = (Z,N,E,\cI,d) = (\cI,d)$ and
$\ucJ = (Z,N,E,\cJ,d) = (\cJ,d)$. 
Define $\ucI + \ucJ$ as $(\cI^{l/d} + \cJ^{l/e}, l)$, where $l = \lcm(d,e)$.
Likewise, for any finite sum.
\end{definition}

It is easy to see: 
\begin{enumerate}
\item
$\cosupp (\ucI + \ucJ)
= \cosupp \ucI \cap \cosupp \ucJ$; 
\item
a blowing-up $\s:Z' \to Z$ is
admissible for $\ucI + \ucJ$ if and only if $\s$ is admissible for both 
$\ucI$ and $\ucJ$, and the transforms satisfy $\ucI' + \ucJ' = (\ucI + \ucJ)'$.
\end{enumerate}

Addition is not associative, but
$\ucI + \ucJ$ is equivalent to $(\ucI^e + \ucJ^d, de)$,
and addition is associative up to equivalence. 

\begin{example} In the notation of \S\ref{subsec:max}, let $\ucJ$ denote
the marked ideal $(Z,Z,\emptyset,\cI_X,d)$, so that $\ucJ|_U = ((f),d)$.
Define the \emph{coefficient marked ideal} $\ucC_U(\ucJ) =
(U,N,\emptyset,\cC,d_{\ucC})$ as the sum of the marked ideals
$((c_i),d-i) = (U,N,\emptyset,(c_i),d-i)$. Then $\ucJ|_U$ is equivalent
to $\ucC_U(\ucJ)$.
\end{example}

\begin{definition}\label{def:invts} {\bf Invariants of a marked ideal.} Given a marked
ideal $\ucI = (Z,N,E,\cI,d)$ and a point $a \in \cosupp \ucI$, we set
\begin{equation}\label{eq:invts}
\mu_a(\ucI) := \frac{\ord_a\cI}{d} \quad \mbox{and} \quad 
\mu_{H,a}(\ucI) := \frac{\ord_{H,a}\cI}{d},\,\, H \in E.
\end{equation}
($\ord_{H,a}\cI$ denotes the \emph{order} of $\cI \subset \cO_N$
\emph{along} $H|_N$ at $a$;\, i.e., the largest $\mu \in \IN$ such that
$\cI_a \subset \cI_{H|_N,a}^\mu$.)
\end{definition}

Both $\mu_a(\ucI)$ and $\mu_{H,a}(\ucI)$ depend only on the equivalence 
class of $\ucI$ and $\dim N$ \cite[\S6]{BMfunct}.
 
\begin{definition}\label{def:max} {\bf Maximal contact.}
Let $\ucI = (Z,N,E,\cI,d) = (\cI,d)$ be a marked ideal
and let $a \in N$. Let $z$ denote a 
regular function on a neighbourhood of $a$ in $N$ such that $\ord_a =1$. Then
$P := V(z)$ is a \emph{maximal contact hypersurface} for $\ucI$ at $a$ if
$P$ is transverse to $E$ and
$(\cI,d) + ((z), 1)$ is equivalent to $(\cI,d)$ on a neighbourhood of $a$ in $Z$.
\end{definition}

\begin{lemma}\label{lem:max}
A marked ideal $\ucI = (Z,N,\emptyset,\cI,d)$ admits a maximal contact hypersurface
at $a \in N$ if and only if $\ord_a \cI = d$ (i.e., $\ucI$ is of maximal order on a
sufficiently small neighbourhood of $a$).
\end{lemma}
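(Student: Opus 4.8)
The plan is to prove both directions of the equivalence, each of which should be essentially immediate from the definitions of maximal contact and equivalence of marked ideals, together with the transformation rules and the sum operation introduced above. Throughout, we work on a sufficiently small neighbourhood of $a$ in $N$ (and $Z$), and we use that $E = \emptyset$, so transversality of a candidate hypersurface to $E$ is automatic.

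For the ``only if'' direction, suppose $\ucI = (Z,N,\emptyset,\cI,d)$ admits a maximal contact hypersurface $P = V(z)$ at $a$, so $(\cI,d) + ((z),1)$ is equivalent to $(\cI,d)$ near $a$. Equivalent marked ideals have the same cosupport; hence $\cosupp \ucI = \cosupp\bigl((\cI,d)+((z),1)\bigr) = \cosupp(\cI,d) \cap \cosupp((z),1) = \cosupp(\cI,d) \cap P$, using the formula $\cosupp(\ucI+\ucJ) = \cosupp\ucI \cap \cosupp\ucJ$. Since $a \in \cosupp\ucI$ by hypothesis, we get $a \in P$, i.e. $z(a) = 0$, which combined with $\ord_a z = 1$ forces nothing yet; instead the key point is that equivalence of the two marked ideals is required to persist after \emph{all} admissible blowings-up, and in particular after a blowing-up with centre a point $a'$ near $a$ with $\ord_{a'}\cI \geq d$ but $a' \notin P$. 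If $\ord_a\cI > d$, then $\cosupp(\cI,d)$ has $a$ as an interior-in-codimension point and one can exhibit a point $a'$ of order $\geq d$ lying off $P$, or more simply one blows up $\{a\}$ itself: the transform of $(\cI,d)$ picks up an exceptional factor to the power $d$ while the transform of $(\cI,d)+((z),1)$ (whose order at $a$ exceeds $d$ as well, but with a strictly larger discrepancy coming from the $((z),1)$ summand) produces a different cosupport in the new chart, contradicting equivalence. Hence $\ord_a\cI = d$. I would phrase this cleanly by invoking that $\mu_a$ is an invariant of the equivalence class (Definition \ref{def:invts}): $\mu_a((\cI,d)+((z),1)) = \mu_a((\cI,d)) = \ord_a\cI/d$; but by Definition \ref{def:sum}, with $l = \lcm(d,1) = d$, $(\cI,d)+((z),1) = (\cI + (z)^d, d)$, so $\mu_a = \ord_a(\cI + (z)^d)/d = \min(\ord_a\cI, d\cdot\ord_a z)/d = \min(\ord_a\cI, d)/d$. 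Equating $\ord_a\cI/d = \min(\ord_a\cI,d)/d$ forces $\ord_a\cI \leq d$, and since $a \in \cosupp\ucI$ gives $\ord_a\cI \geq d$, we conclude $\ord_a\cI = d$.

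For the ``if'' direction, suppose $\ord_a\cI = d$. We must produce $z$ with $\ord_a z = 1$ such that $(\cI,d)+((z),1)$ is equivalent to $(\cI,d)$ near $a$. This is exactly the content of the classical construction of maximal contact via a generic hypersurface section or via a coordinate derivative: since $\ord_a\cI = d$, some generator $f$ of $\cI$ has $\ord_a f = d$, so after a linear change of coordinates a suitable $(d-1)$st partial derivative $\partial^{d-1} f/\partial x_n^{d-1}$ has order exactly $1$ at $a$; set $z$ equal to this derivative (or, equivalently, put $f$ in Weierstrass form as in the first Example of \S\ref{subsec:max} and take $z = x_n$). The two Examples in \S\ref{subsec:max} verify precisely that $V(z)$ then satisfies the defining transformation property of maximal contact: the transformation rule $\partial_{z'}^i f' = y_{\exc}^{-(d-i)}\partial_z^i f \circ \s$ shows $\cosupp(\cI',d)$ stays inside $V(z')$ after every $\ord$-admissible blowing-up, which is the geometric form of the equivalence $(\cI,d)+((z),1) \sim (\cI,d)$. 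I would cite \cite[\S3, \S4]{BMfunct} (or the relevant sections of \cite{BMinv}) for the statement that this local construction yields a genuine equivalence of marked ideals stable under all test transformations, since that is where the bookkeeping lives.

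The main obstacle is the ``if'' direction, specifically making rigorous that the ad hoc choice of $z$ (a $(d-1)$st derivative of a generator, hence a priori depending on choices) does define a maximal contact hypersurface in the precise sense of Definition \ref{def:max}, i.e. that $(\cI,d) + ((z),1)$ is equivalent to $(\cI,d)$ not merely for admissible blowings-up but for all three kinds of test transformations. The clean way to handle this is to reduce, via Weierstrass preparation, to the situation of the first Example of \S\ref{subsec:max}, where the transformation formula \eqref{eq:st} for the coefficients makes the stability transparent; the two non-blowing-up test transformations (projection from a product with $\IA^1$, and exceptional blowings-up) are handled by the functoriality machinery of \cite[\S2]{BMfunct} and do not require separate computation here. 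The ``only if'' direction, by contrast, is a one-line consequence of the invariance of $\mu_a$ under equivalence, as shown above.
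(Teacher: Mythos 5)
Your proof is correct and follows essentially the same route as the paper: for the ``only if'' direction you ultimately invoke the invariance of $\mu_a$ under equivalence and compute $\mu_a((\cI,d)+((z),1)) = \min(\ord_a\cI,d)/d$ via Definition~\ref{def:sum}, which is exactly the one-line argument the paper has in mind, and for the ``if'' direction you take $z$ to be a suitable $(d-1)$st derivative of a local section of $\cI$ of order $d$ and cite \cite[\S4]{BMfunct}, matching the paper's proof. The only stylistic point is that the first half of your ``only if'' paragraph (the blow-up-a-point heuristic) is extraneous and a bit loose; the clean $\mu_a$ computation you give afterward should simply replace it.
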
 

\begin{proof}
The ``only if'' direction is consequence of invariance of $\mu_a(\ucI)$.
In the other direction, if $\ord_a \cI = d$, then there is a local section $f$
of $\cI$ at $a$ and a partial derivative
$\p^\al := \p^\al / \p x^\al$ of order $d-1$, with respect to local coordinates
of $N$, such that $z:= \p^\al f$ has order $1$ at $a$. Then $P:= V(z) \subset N$
is a maximal contact hypersurface at $a$ \cite[\S4]{BMfunct}.
\end{proof}

\subsection{Coefficient ideals} \label{subsec:coeff}
We now formalize the coefficient data
$\{(c_i,d-i)\}$ of \S\ref{subsec:max} as a marked ideal.
Let $\ucI = (Z,N,E,\cI,d) = (\cI,d)$ be a marked ideal of maximal order, and
let $a \in \cosupp \ucI$. Suppose that $P = V(z)$ is a maximal contact
hypersurface for $\ucI$, in some neighbourhood $U$ of $a$. In a suitable $U$,
we can find a system of local coordinates
$(x_1,\ldots,x_n)$ for $N$ such that $x_n = z$ and the
components of $E$ are given by $x_i = 0$, $i = 1,\dots,r<n$. 
Let $\cD_z(\cI)$ denote the ideal generated by $f$, $\p f/\p z$,
for all $f \in \cI$, and let $\ucD_z(\ucI)$ denote the marked ideal
$\left(\cD_z(\cI),d-1\right)$. 
For $j\geq 2$, we inductively set
$\cD_z^j(\cI) := \cD_z\left(\cD_z(\cI)\right)$, and we
define marked ideals
\begin{align*}
\ucD_z^j(\ucI) &:= \left(\cD_z^j(\cI),d-j\right),\quad j=0,\dots,d-1,\\
\ucC_z^{d-1}(\ucI) &:= \sum_{j=0}^{d-1}\ucD_z^j(\ucI)\\
                   &= \left(\cC_z^{d-1}(\ucI),d_{\ucC}\right),\quad\text{say}. 
\end{align*}		    
We define the \emph{coefficient (marked) ideal} $\ucC_z(\ucI)$ as
the restriction of the latter to the maximal contact hypersurface $P$; i.e.,
$$
\ucC_z(\ucI) := \left(U,P,E, \cC_z^{d-1}(\ucI)|_P, d_{\ucC}\right).
$$
Then $\ucC_z(\ucI)$ is equivalent to $\ucI$ (in the chart $U$),
essentially by the calculations in \S\ref{subsec:max} (see \cite[\S4]{BMfunct}). 

\begin{example}\label{ex:split} Suppose that $E = \emptyset$ and 
$\cI$ is a principal ideal generated by $f(x)$ as in \eqref{eq:wpol}. Assume
that $c_{d-1} = 0$. Set $z:= x_n$. Then $P = V(z)$  is a maximal contact
hypersurface, and the coefficient
ideal $\ucC_z(\ucI) = \sum_{i=0}^{d-2}((c_i), d-i)$.

Suppose that $f(x)$ splits; i.e.,
$$
z^d + c_{d-2}(\tx) z^{d-2} + \cdots c_0(\tx) = (z-b_1(\tx))\cdots(z-b_d(\tx)).
$$
Then $\ucC_z(\ucI)$ is equivalent to the marked ideal $\sum_{j=1}^d((b_j), 1)$.
This follows from the fact the $\ord_a b_j \geq k$, for all $j$, if and only if
$\ord_a \s_i \geq ki$, for all $i$, where $\s_i$ denotes the $i$th elementary
symmetric function of the $b_j$.
\end{example}

\begin{remark}\label{rem:issues}
In general, since the coefficient ideal $\ucC_z(\ucI)$ is equivalent to $\ucI$
(in a chart $U$ as above), any resolution of singularities of $\ucC_z(\ucI)$
is a resolution of singularities of $\ucI$ over $U$ (as in Definitions \ref{def:markedideal}).
Since $\dim \ucC_z(\ucI) < \dim \ucI$, the idea is to use the coefficient ideal as 
a basis for induction on dimension. There are two main problems involved in carrying
out this idea.
\smallskip

(1) Passage from $\ucI$ to $\ucC_z(\ucI)$ requires that $\ucI$ be of maximal
order, so that it admits a maximal contact hypersurface (according to 
Lemma \ref{lem:max}). But $\ucC_z(\ucI)$ is not necessarily
of maximal order, so we cannot \emph{a priori}
repeat the construction inductively.

Morevover, maximal contact is not unique. Local centres of blowing up chosen by an
inductive construction as above need not \emph{a priori} glue together to give a global
centre of blowing up. This gluing problem can be resolved by iterating a suitable inductive
construction in decreasing dimension to define a desingularization invariant (or, as in 
\cite{BMfunct}, by using functoriality properties of equivalence classes of marked ideals
to make a stronger inductive assumption that guarantees gluing).
\smallskip

(2) In general, a marked ideal $\ucI = (Z,N,E,\cI,d)$ of maximal order 
admits a maximal contact hypersurface 
$P = V(z)$, according to Lemma \ref{lem:max}, only provided
that $E=\emptyset$ (for example, in year zero).
\end{remark}

Item (1) of the Remark is treated using the constructions in 
\S\S\ref{subsec:monres}, \ref{subsec:compan} and item (2) using \S\ref{subsec:bdry}. 

\subsection{Monomial and residual ideals}\label{subsec:monres}
In general, given a marked ideal $\ucI = (Z,N,E,\cI,d) = (\cI, d)$, we can
factor $\cI$ as
\begin{equation*}
\cI = \cM(\ucI)\cdot\cR(\ucI),
\end{equation*}
where $\cM(\ucI)$ is a product of the ideals $\cI_H$ of the components $H$ of $E$,
and $\cR(\ucI)$ is divisible by no such exceptional divisor. We call $\cM(\ucI)$ the
\emph{monomial} or \emph{divisorial part} and $\cR(\ucI)$ the \emph{residual}
or \emph{nonmonomial part} of $\cI$.

We define the \emph{residual multiplicity} of $\ucI$ at a point $a \in \cosupp \ucI$,
\begin{equation*}
\nu_{\ucI}(a) := \frac{\ord_a \cR(\ucI)}{d}.
\end{equation*}
Then
\begin{equation*}
\nu_{\ucI}(a) = \mu_a(\ucI) - \sum_{H \in E} \mu_{H,a}(\ucI)
\end{equation*}
(cf. Definition \ref{def:invts}), so that $\nu_{\ucI}(a)$ depends only on the
equivalence class of $\ucI$. 

We use the residual multiplicity to define the term $\nu_2(a)$ in $\inv$,
and inductively to define $\nu_j(a)$, $j\geq 2$. (See \S\ref{subsec:inv0} and Definition
\ref{def:invgeneral}.)

Let $\ord\, \cR(\ucI)$ denote the maximum order of $\cR(\ucI)$
on $\cosupp \ucI$. Then the \emph{residual (marked) ideal}
\begin{equation*}
\ucR(\ucI) := (\cR(\ucI), \ord\, \cR(\ucI)) = (Z,N,E, \cR(\ucI), \ord\, \cR(\ucI))
\end{equation*}
is a marked ideal of maximal order.
\smallskip

In general, a blowing-up that is admissible for $\ucR(\ucI)$ need not be admissible
for $\ucI$. If $\cM(\ucI) = 1$, however (for example, in year zero), 
then $\ucR(\ucI) = \ucI$ and any blowing-up
that is $\ucR(\ucI)$-admissible will also be $\ucI$-admissible. This is enough to
define the invariant in year zero. 

\begin{remark}\label{rem:localcalc}
In order to calculate the resolution invariant at a point $a$ in any year of the
resolution history, we make the construction above locally at $a$. In particular,
we can identify $E$ with the set $E(a)$ of components of $E$ at $a$, and
$\ord\, \cR(\ucI) = \ord_a \cR(\ucI)$. This localization of the construction will be
assumed in the computation below.
\end{remark}

\subsection{The invariant in year zero}\label{subsec:inv0}
All $s_i = 0$.
Let $\ucI^0 := \ucI_X$ (see Example \ref{ex:markedres}). Then $\cR(\ucI^0) = \cI^0$. Consider
$a \in \cosupp \ucI^0$. Then $\nu_{\ucI^0}(a) = \ord_a \cI_X = \nu_1(a)$. We set:
\begin{itemize}
\item[{}]
$\ucJ^0 := \ucR(\ucI^0)$. Then $\ucJ^0$ is of maximal order. Let $P = V(z)$ be a
maximal contact hypersurface for $\ucJ^0$ at $a$.
\smallskip

\item[{}]
$\ucI^1 :=$ the coefficient ideal 
$\ucC_z(\ucJ^0) = \left(Z,P,\emptyset,\cC(\ucJ^0), d_{\ucC}\right)$. 
\end{itemize}
\smallskip

We define 
$$
\nu_2(a) := \nu_{\ucI_1}(a) = \ord_a \cR(\ucC_z(\ucJ^0)) / d_{\ucC}
$$ 
(of course,
here in year zero, $\cR(\ucC_z(\ucJ^0)) = \cC(\ucJ^0)$), and
we iterate the preceding construction: Set $\ucJ^1 := \ucR(\ucI^1)$. Then $\ucJ^1$ is of maximal order, so it admits a maximal
contact hypersurface $Q = V(w)$ in $P$; $Q$ is of the form $V(z,w)$ in a coordinate chart
of $Z$ --- a ``codimension two maximal contact subspace'', etc. We thus define 
$\nu_3(a)$,\,$\ldots$\,. At a certain step, the coefficient ideal
$\ucI^q = \ucC_\cdot(\ucJ^{q-1})$ becomes zero (e.g., we might run out of variables). Then we put $\nu_{q+1}(a) 
:= \infty$ and $\inv(a)= (\nu_1(a), 0, \nu_2(a), 0, \ldots, 0, \nu_{q+1}(a))$. The locus
of points $(\inv = \inv(a))$ (the locus of points where $\inv = \inv(a)$) is (locally) the last maximal contact subspace,
of codimension $q$.

\begin{example}\label{ex:ppyear0}
Let $X$ denote the hypersurface $(z^2 + xy^2 = 0)$ in $Z = \IA^3$. We show
that (in year zero), $\inv(0) = (2,0,3/2,0,1,0,\infty)$ and $(\inv = \inv(0))$ is $C_0 = \{0\}$; this will be the first centre of blowing-up 
in the resolution algorithm. The calculations needed to compute $\inv(0)$
according to the preceding definition are presented in the following table.
The marked ideal $\ucI^{i+1}$ in each row $i+1$ of the table lives on the maximal
contact subspace (of codimension $i+1$)
in row $i$. Each $\ucI^{i+1}$ is the coefficient ideal of $\ucJ^i$.
It is clear that $(\inv = \inv(0))$ is the last maximal contact subspace $(z=y=x=0)$.
\medskip

\renewcommand{\arraystretch}{1.5}
\begin{tabular}{c | c | c | c }
codim $i$ & marked ideal $\ucI^i$ & residual ideal $\ucJ^i$ & maximal contact \\\hline
0 & $(z^2 + xy^2,1)$ & $(z^2 + xy^2,2)$ & $(z=0)$\\\hline
1 & $(xy^2,2)$          & $(xy^2, 3)$          & $(z=y=0)$\\\hline
2 & $(x,1)$                & $(x,1)$                & $(z=y=x=0)$\\\hline
3 & $0$ &&\\
\end{tabular}
\end{example}
\smallskip

\subsection{Companion ideals}\label{subsec:compan}
We use the notation of \S\ref{subsec:monres}. Recall that, in general, a
blowing-up that is admissible for $\ucR(\ucI)$ need not be admissible for $\ucI$.
We define the \emph{companion
ideal} $\ucG(\ucI)$ as
$$
\ucG(\ucI) := 
\begin{cases}
(\cR(\ucI), \ord\,\cR(\ucI)) + (\cM(\ucI), d - \ord\,\cR(\ucI)), & \ord\,\cR(\ucI)) < d \\
(\cR(\ucI), \ord\,\cR(\ucI)), & \ord\,\cR(\ucI)) \geq d 
\end{cases}
$$

It is not difficult to see that $\cosupp \ucG(\ucI) = \cosupp \ucR(\ucI) \cap 
\cosupp \ucI$ and thus that $\ucG(\ucI)$-admissible blowings-up are
also $\ucI$-admissible.
Moreover, the equivalence class of $\ucG(\ucI)$ depends
only on the equivalence class of $\ucI$; this is a consequence of the same
property for the invariants \eqref{eq:invts} (see \cite[Cor.\,5.3]{BMfunct}).

This is enough to define the invariant
at a point $a$ \emph{in any year of the blowing-up history}, provided that all $s_i(a) = 0$.
We simply use the year zero definition of \S\ref{subsec:inv0} with one change:
For each $i$, we 
take $\ucJ_i := \ucG(\ucI_i)$.

\begin{remark}\label{rem:invs0}
In the preceding definition, note that each $\nu_{i+1}(a) := \nu_{\ucI_i}(a)$, where
the latter is still the residual multiplicity as defined in \S\ref{subsec:inv0}. But
the change in the definition of the $\ucJ^i$ may result in a change in $\nu_i(a)$,
$i \geq 2$, and it might result in a change in the last term $\nu_{q+1}(a)$ of $\inv(a)$:

In the current situation, we will arrive at a certain step $q$ where either
$\cI^q = 0$ or $\cI^q = \cM(\ucI^q)$. In the former case, we put $\nu_{q+1}(a) 
:= \infty$, as in \S\ref{subsec:inv0}. In the latter case, we put $\nu_{q+1}(a) 
:= 0$ (the order of $\cR(\ucI^q)$). This is the \emph{monomial case} of
resolution of singularities; see \cite[Sect.\,5, Step\,II, Case\,A]{BMfunct}.
We do not need the invariant in the case that $\nu_{q+1}(a) = 0$ in this article,
but monomial resolution intervenes in the cleaning lemma (Section 2).
\end{remark}

\subsection{Coefficient ideals with boundary}\label{subsec:bdry}
The construction of this subsection is needed to treat the terms $s_i(a)$, in general.
Let $\ucJ = (Z,N,E,\cJ,d)$ denote a marked ideal of maximal order. We call $E$
the \emph{boundary} of $\ucJ$.
Set $\ucJ_\emptyset := (Z,N,\emptyset,\cJ,d)$. Then, locally, 
$\ucJ_\emptyset$ admits a maximal contact
hypersurface $P = V(z)$, by Lemma \ref{lem:max}. However, $P$ need not be
snc with respect to $E$. 

We ``add the boundary to the coefficient ideal'' (see \eqref{eq:bdry}) 
to ensure that the centre of 
blowing up will lie in all components of the boundary, so will automatically be
snc with respect to the boundary divisor.

At any point $a$ of $\cosupp \ucJ$, the boundary determines a marked ideal
$\sum(\cI_H,1)$, where the sum is over all components $H$ of $E$ such that
$a \in H$. At $a$, the coefficient ideal plus boundary is given by
\begin{equation}\label{eq:bdry}
\ucI' := \ucC_z(\ucJ_\emptyset) + \sum(\cI_H|_{(z=0)},1)
\end{equation}

Note that $\ucI'$ itself has empty boundary. Resolution of singularities of $\ucI'$
involves centres in the maximal contact hypersurface $(z=0)$ and its 
successive strict transforms. During the resolution process, the \emph{new}
exceptional divisors that accumulate are automatically transverse to (the strict
transform of) $(z=0)$, and the \emph{old} exceptional divisors (the boundary
above) will be moved away.

\begin{remark}\label{rem:iterative} 
Given a marked ideal $\ucI = (Z,N,E,\cI,d)$, set $E(\ucI) := E$.

Again consider $\ucI = \ucI_X$. Let $a$ denote a point in year zero. Write $E^1(a)
= E(a)$.
Resolution of singularities of the companion ideal $\ucJ = \ucG(\ucI)$ at $a$
provides a sequence of admissible blowings-up for $\ucI$ over $a$. Consider
a point $b$ over $a$,
in any year of the resolution history for $\ucJ$. 

Suppose that $b \in \cosupp \ucJ$. Then 
$\nu_1(b) = \nu_1(a)$. Let $E^1(b)$ denote the set of transforms of elements of $E(a)$
at $b$ (the ``old exceptional divisors''). Note also that $\ucC_z(\ucJ_\emptyset)$ has
accumulated a set of ``new exceptional divisors'' $E(b)\setminus E^1(b)$ at $b$.
Moreover, $\ucC_z(\ucJ_\emptyset)$ has a maximal contact hypersurface at $b$,
transformed from year zero, so transverse to the new exceptional divisors.

On the other hand, suppose that $b \notin \cosupp \ucJ$. Then 
$\nu_1(b) < \nu_1(a)$. When the order first drops (the ``year of birth'' of $\inv_{1/2}
= \inv_{1/2}(b)$), we choose a new companion
ideal $\ucJ$ and a new maximal contact hypersurface for $\ucJ_\emptyset$ at $b$,
which need not be transverse to $E(b)$. Then we set $E^1(b) := E(b)$ and
repeat the process above.

Then, at a point $c$ in any year of the resolution history for $\ucI$, the boundary in
\eqref{eq:bdry} is $E^1(c)$ and the coefficient ideal plus boundary is
\begin{equation}\label{eq:bdrygeneral}
\ucC_z(\ucJ_\emptyset) + \sum_{H \in E^1(c)}(\cI_H|_{(z=0)},1),
\end{equation}
with $E(\ucC_z(\ucJ_\emptyset)) = E(c)\setminus E^1(c)$.
\end{remark}

%
In the iterative construction of the invariant, the boundary phenomenon
occurs on maximal contact subspaces of every codimension $i$. The
boundary components added to the coefficient ideal on a maximal contact
subspace of codimension $i$ at a point $a$ are the elements of $E^i(a)$;
i.e, the components of the exceptional divisor counted by $s_i(a)$ (see
\S\ref{subsec:inv} and Remark \ref{rem:pract}). 

\subsection{The desingularization invariant and an example computation}
\label{subsec:invgeneral}
We begin with a definition of $\inv$, in the general case.

\begin{definition}\label{def:invgeneral} {\bf The desingularization invariant.} 
We repeat the iterative scheme in \S\S\ref{subsec:inv0}
and \ref{subsec:compan} above, with the changes need to accommodate the boundary
terms.

As in \S\ref{subsec:inv}, we assume, by induction, that $\inv$ has been
defined up to year $j$ (so that blowings-up have been determined, up to
$\s_{j+1}: Z_{j+1} \to Z_j$. Let $\ucI^0$ denote the transform in year $j+1$ of $\ucI_X$ (see Example \ref{ex:markedres}). Consider
$a \in \cosupp \ucI^0$. Then $\nu_{\ucI^0}(a) = \ord_a \cI_X = \nu_1(a)$. We
define $E^1(a)$ as in \S\ref{subsec:inv} or \S\ref{subsec:bdry}
and set $s_1(a) = \#E^1(a)$. We take:
\begin{itemize}
\item[{}]
$\ucJ^0 := \ucG(\ucI^0)$. Then $\ucJ^0$ is of maximal order. Let $P = V(z)$ be
a maximal contact hypersurface for $\ucJ^0$ at $a$ (see Remark \ref{rem:iterative}).
\smallskip

\item[{}]
$\ucI^1 :=$ the coefficient ideal plus
boundary, i.e.,
$$
\ucI^1 := \ucC_z(\ucJ^0_\emptyset) + \sum_{H \in E^1(a)}(\cI_H|_{(z=0)}, 1),
$$
as in \eqref{eq:bdrygeneral}.
\end{itemize}

We define 
$$
\nu_2(a) := \nu_{\ucI_1}(a), \quad s_2(a) = \#E^2(a),
$$
with $E^2(a)$ as in \S\ref{subsec:inv}, and iterate the construction. 

We finish when $\nu_{q+1}(a) = 0$ or $\infty$, as in Remark \ref{rem:invs0}.
\end{definition}

\begin{remark}\label{rem:fin}
If $\nu_{q+1}(a) = \infty$, then the locus $\inv = \inv(a)$ is the maximal
contact subspace of codimension $q$. If, in addition, all $s_i(a) = 0$, then
the latter is transverse to the exceptional divisor.
\end{remark}

\begin{remark}\label{rem:pract}
In practical terms, $\ucI^i$ lives on a maximal contact subspace of codimension $i$.
To pass from $\ucI^i$ to the companion ideal $\ucJ^i$, we use the factorization
$\cI^i = \cM(\ucI^i)\cR(\ucI^i)$ of \S\ref{subsec:monres}. At a point $a$,
$\cM(\ucI^i)$ is a monomial in the exceptional divisors in $E(a)\setminus (E^1(a) \cup
\cdots \cup E^i(a))$, which are transverse to $N^i$ (the ``new'' exceptional divisors
in codimension $i$). The ``old'' exceptional divisors in $E^i(a)$ are transformed from
the year of birth of $\inv_{i-1/2} = \inv_{i-1/2}(a)$. They are counted by $s_i(a)$
rather than considered elements of $E(\ucI^i)$.
\end{remark}

\begin{example}\label{ex:ppres}
We compute the blowings-up given by the desingularization 
algorithm for the pinch-point singularity, after the first blowing-up given in
Example \ref{ex:ppyear0}. The following table provides the computations needed
to find the invariant and the centre $C$ of the blowing-up at the origins of
the charts corresponding to the coordinate substitutions indicated. Note that
the pinch-point singularity persists to year two. The strict transform of the
pinch-point hypersurface in the year-one chart exhibited lies in the union of 
the two year-two charts shown. The calculations at a given point 
provide the next
centre of blowing up over a neighbourhood of that point; globally, the maximum
locus of the invariant will be blown up first.

In each
subtable, the passage from $\ucJ^i$ to $\ucI^{i+1}$ is given by taking the
coefficient ideal plus boundary, on the maximal contact subspace of codimension $i+1$.
\bigskip

\renewcommand{\arraystretch}{1.5}
\begin{tabular}{c | c | c | c | c}
\hline
codim & marked ideal & companion ideal  & maximal & boundary\\[-.25cm]
$i$ & $\ucI^i$ & $\ucJ^i = \ucG(\ucI^i)$ & contact & $E^i$ \\\hline
\multicolumn{5}{ c }{}\\[-.5cm]
\multicolumn{5}{ l }{{\bf Year one.}\,  Coordinate chart $(x,xy,xz)$}\\[.1cm]\hline
0 & $(x(z^2 + xy^2),1)$ & $(z^2 + xy^2,2)$ & $(z=0)$&\\\hline
1 & $(xy^2,2)$          & $(y^2, 2)$          & $(z=y=0)$& (x=0)\\\hline
2 & $(x,1)$                & $(x,1)$                & $(z=y=x=0)$&\\\hline
3 & $0$ &&&\\\hline
\multicolumn{5}{ l }{$\inv(0) = (2,0,1,1,1,0,\infty)$,\,  $C_1 = \{0\}$}\\
\multicolumn{5}{ c }{}\\[-.4cm]
\multicolumn{5}{ l }{{\bf Year two.}\,  Coordinate chart $(x,xy,xz)$}\\[.1cm]\hline
0 & $(x^2(z^2 + xy^2),1)$ & $(z^2 + xy^2,2)$ & $(z=0)$&\\\hline
1 & $(xy^2,2)$          & $(y^2, 2)$          & $(z=y=0)$& \\\hline
2 & 0              &     & &\\\hline
\multicolumn{5}{ l }{$\inv(0) = (2,0,1,0,\infty)$,\,  $C_2 = (z=y=0)$}\\
\multicolumn{5}{ c }{}\\[-.4cm]
\multicolumn{5}{ l }{{\bf Year two.}\,  Coordinate chart $(xy,y,yz)$}\\[.1cm]\hline
0 & $(xy(z^2 + xy),1)$ & $(z^2 + xy,2)$ & $(z=0)$&\\\hline
1 & $(xy,2)$          &          & & \\\hline
\multicolumn{5}{ l }{$\inv(0) = (2,0,0)$,\,  $C_2 = \{0\}$}\\
\end{tabular}
\smallskip

\end{example}

\bibliographystyle{amsplain}

\end{document}